\documentclass[12pt,reqno]{amsart}
\usepackage{amsfonts,amsmath,amssymb,amsthm}
\usepackage{latexsym}
\usepackage{eucal}
\usepackage[ansinew]{inputenc}
\usepackage[american]{babel}
\usepackage{amsfonts}
\usepackage{amssymb}
\usepackage[dvips]{graphicx}

\setlength{\textwidth}{16cm}\setlength{\textheight}{20 cm}
\addtolength{\oddsidemargin}{-1.5cm}
\addtolength{\evensidemargin}{-1.5cm}

\numberwithin{equation}{section}

\newtheorem{teo}{Theorem}[section]
\newtheorem{lema}{Lemma}[section]
\newtheorem{prop}{Proposition}[section]

\theoremstyle{remark}
\newtheorem{remark}{Remark}[section]
\newtheorem{obs}{Remark}[section]

\newcommand{\R}{\mathbb{R}}


\begin{document}

\title[Boussinesq-Full Dispersion]
{Existence of solitary waves solutions for internal waves in two-layers systems}

\author[J. Angulo and J-C Saut]{Jaime Angulo Pava$^1$ and Jean-Claude Saut $^2$}

\maketitle

\centerline{$^1$Department of Mathematics,
IME-USP}
 \centerline{Rua do Mat\~ao 1010, Cidade Universit\'aria, CEP 05508-090,
 S\~ao Paulo, SP, Brazil.}
 \centerline{\it angulo@ime.usp.br}

 \centerline{$^2$Laboratoire de Math\'ematiques d'Orsay }
 \centerline{ Univ. Paris-Sud, CNRS, Universit\' e Paris-Saclay, 91405 Orsay, France}
 \centerline{\it Jean-Claude.Saut@u-psud.fr}
\begin{abstract}
The aim  of this paper is to establish the existence of solitary wave solutions for two classes of  two-layers systems modeling  the propagation of internal waves. More precisely we will consider  the  Boussinesq-Full dispersion system and the Intermediate Long Wave (ILW) system together with its Benjamin-Ono (BO) limit.
\end{abstract}


\textbf{Keywords:} 
\textbf{Mathematical  subject  classification:  76B55, 35Q35, 76B03, 35C07, 35B65, 76B25} .

\thanks{{\it Date}: April 3, 2018}


\section{Introduction}
This paper is concerned with the existence and properties of solitary wave solutions to some internal waves models. Such models were systematically and  rigorously (in the sense of consistency) derived from the two-layers system with a rigid lid in  \cite{BLS} (see also \cite{CGK} and the survey article \cite{Sa}). They depend of course on the various regimes determined by the wave lengths, heights of the layers,.. that determine the suitable small parameters used in the derivation of the  asymptotic models.

For instance, it is shown  \cite{BLS} that in the so-called Boussinesq-Full dispersion regime  and in the absence of surface tension, the two-layers system for internal waves is consistent with the {\it three-parameter family} of Boussinesq-Full dispersion systems (henceforth B-FD systems)
\begin{equation}\label{BF}
\left\{\begin{array}{lll}
(1-\mu b\Delta)\partial_t\zeta + \frac1\gamma \nabla \cdot((1-\epsilon\zeta){\bf{v}}_\beta)\\
\;\;\;\;- \frac{\sqrt{\mu}}{\gamma^2} |D| \coth(\sqrt{\mu_2}|D|)\nabla\cdot {\bf{v}}_\beta + \frac{\mu}{\gamma}\big(a-\frac{1}{\gamma^2} \coth^2(\sqrt{\mu_2}|D|)\Big)\Delta\nabla\cdot {\bf{v}}_\beta=0,\\
(1-\mu d\Delta)\partial_t {\bf{v}}_\beta+ (1-\gamma) \nabla \zeta - \frac{\epsilon}{2\gamma}\nabla|{\bf{v}}_\beta|^2 + \mu c(1-\gamma)\Delta\nabla \zeta=0,
\end{array}\right.
\end{equation}
where ${\bf{v}}_\beta=(1-\mu\beta \Delta)^{-1}{\bf{v}}$ (${\bf v} $ being the horizontal velocity) and the constants $a, b, c$ and $d$ are defined as
$$
a=\frac13(1-\alpha_1-3\beta),\quad b=\frac13\alpha_1,\quad c=\beta\alpha_2,\quad d=\beta(1-\alpha_2),
$$
with $\alpha_1\geqq 0$, $\beta\geqq 0$, and $\alpha_2\leqq 1$. Note that $a+b+c+d=\frac{1}{3}.$

To describe precisely the asymptotic regime we are considering we recall the definition of a few parameters (see \cite{BLS} for more details). The upper (resp. lower) layer is indexed by $1$ (resp. $2$.) $\gamma =\frac{\rho_1}{\rho_2}$ is the ratio of densities. $d=\frac{d_1}{d_2}$ is the ratio of the typical depth of the two layers. The typical elevation of the wave is denoted by a, the typical horizontal wavelength is $\lambda$. Then, one set

$$\epsilon=\frac{a}{d_1},\; \mu=\frac{d_1^2}{\lambda ^2},\; \epsilon_2=\frac{a}{d_2}=\epsilon \delta,\; \mu_2=\frac{d_2^2}{\lambda^2}=\frac{\mu}{\delta^2}.$$

The Boussinesq-Full dispersion regime is obtained by assuming that $\mu\sim \epsilon$ and $\mu_2\sim 1$ (and thus $\delta\sim \epsilon^{1/2}$) in addition to $\epsilon\ll 1$ and $\epsilon_2\ll 1.$ The three free parameters in \eqref{BF} arise from a double use of the so-called BBM (Benjamin-Bona-Mahony)  trick and of a one-parameter choice of the velocity (see \cite{BLS} for details). 

\begin{remark}
If in addition to $\epsilon \ll 1$ and $\epsilon_2\ll1$ one assumes that $\mu\sim\epsilon$ and $\mu_2\sim \epsilon_2$ and thus $\delta\sim 1,$ one obtains the {\it Boussinesq-Boussinesq} regime, leading to Boussinesq systems similar to those introduced in \cite{BCS1, BCS2, BCL} and studied in \cite {SX, Bu, SWX}.
\end{remark}

\begin{remark}
In oceanographic applications the surface tension effects are very weak and can be ignored when deriving the aforementioned asymptotic models (they appear as a lower order effect). In situations where they are small but of the order of the "small" parameters involved in the asymptotic expansions (the $\epsilon's$ or the $\mu's$) one has to add a cubic "capillary" term to the equation for ${\bf v},$ proportional to $\Delta\nabla \zeta, $  but also add higher oder terms in $\epsilon, \mu$ leading to a somewhat more  complicated system (see \cite{Cung2}) that we will not consider here.
\end{remark}

It is easily checked that (\ref{BF}) is linearly well posed when (see \cite{Cung})
$$
a\leqq 0, \;c\leqq 0,\; b\geqq 0,\; d\geqq 0.
$$
Throughout the paper we will assume that this condition holds.

The local well-posedness of the Cauchy problem for (\ref{BF}) was considered by Cung in \cite{Cung} in the following cases
\begin{enumerate}
\item[(1)] $b>0$, $d>0$, $a\leqq 0$, $c<0$;
\item[(2)] $b>0$, $d>0$, $a\leqq 0$, $c=0$;
\item[(3)] $b=0$, $d>0$, $a\leqq 0$, $c=0$;
\item[(4)] $b=0$, $d>0$, $a\leqq 0$, $c<0$;
\item[(5)] $b>0$, $d=0$, $a\leqq 0$, $c=0$.
\end{enumerate}

Note that all the linearly well-posed systems are not covered here, in particular those where both the coefficients $b$ and $d$ vanish (but never happens in the modeling of internal waves without surface tension due to the modeling constraint $a+b+c+d=\frac{1}{3}.$ Note also that long time existence (that is on time scales of order $\frac{1}{\epsilon}\sim \frac{1}{\mu})$ is still an open problem and will be considered elsewhere.

On the other hand \eqref{BF} is an Hamiltonian system when $b=d.$ The Hamiltonian structure is given by
\begin{equation}
\partial_t \begin{pmatrix}\zeta\\{\bf v}_\beta \end{pmatrix}+J\text{grad}\;H(\zeta, {\bf v}_\beta)=0,
\end{equation}
where
$$J=(1-\mu b\Delta)^{-1}\begin{pmatrix}0&\nabla\cdot\\\nabla  &0\end{pmatrix}$$
and 
\begin{multline}
 H(\zeta, {\bf v}_\beta)=\int_{\mathbb R} (\frac{1-\gamma}{2}\zeta^2+\frac{1}{2}|{\bf v}_\beta|^2-\frac{\epsilon}{2\gamma}\zeta|{\bf v}_\beta|^2-\frac{\mu c}{2} (1-\gamma)|\nabla \zeta|^2\\-\frac{a\mu}{2\gamma}|\nabla {\bf v}_\beta|^2+\frac{\sqrt \mu}{2\gamma^2}|L_1^{1/2}{\bf v}_\beta|^2+\frac{\mu}{2\gamma^3}|L_2^{1/2} {\bf v}_\beta|^2), 
 \end{multline}
where $L_1=|D|\coth(\sqrt \mu|D|)$  and $L_2=\coth(\sqrt \mu|D|).$

We are interested here in the existence of solitary waves solutions for the B-FD  systems in one space dimension. Denoting now $v$  the velocity, the system writes :
 \begin{equation}\label{BF1}
\left\{\begin{array}{lll}
J_b\partial_t\zeta +\mathcal L_{\mu_2}\partial_x v -\frac{\epsilon}{\gamma}\partial_x(\zeta v)=0\\
J_d\partial_t v+ (1-\gamma) J_c\partial_x \zeta -\frac{\epsilon}{2\gamma}\partial_x(v^2)=0,
\end{array}\right.
\end{equation}
where $\mathcal L_{\mu_2}$ is the self-adjoint operator defined for $\mu_2\in (0, +\infty]$ by
\begin{equation}\label{L2}
\mathcal L_{\mu_2}=\frac1\gamma-\frac{\sqrt{\mu}}{\gamma^2} |D| \coth(\sqrt{\mu_2}|D|)+ \frac{\mu}{\gamma}\big(a-\frac{1}{\gamma^2} \coth^2(\sqrt{\mu_2}|D|)\Big)\partial_x^2,
\end{equation}
We note that for $\mu_2=+\infty$ the linear operator $\mathcal L_{\mu_2}\equiv \mathcal L_{+\infty}$ becomes
\begin{equation}\label{Linf}
\mathcal L_{+\infty}=\frac1\gamma-\frac{\sqrt{\mu}}{\gamma^2} |D| + \frac{\mu}{\gamma}\big(a-\frac{1}{\gamma^2} \Big)\partial_x^2.
\end{equation}
The Bessel-type operators $J_b, J_d, J_c$ are defined by
$$
J_b=1-\mu b\partial_x^2,\; J_d=1-\mu d\partial_x^2, \; J_c=1+\mu c\partial_x^2.
$$
 The solitary waves solutions $\zeta(x,t)=\xi(x-\omega t), v(x,t)=\nu(x-\omega t)$ vanishing at infinity should satisfy in the variable $z=x-\omega t$, $\xi=\xi(z), \nu=\nu(z)$ the system
\begin{equation}\label{0soli}
 \left\{\begin{array}{lll}
-\omega J_b \xi +\mathcal L_{\mu_2}\nu =\frac{\epsilon}{\gamma}\xi \nu\\
-\omega J_d\nu+ (1-\gamma) J_c\xi =\frac{\epsilon}{2\gamma}\nu^2.
\end{array}\right.
\end{equation}

The following theorem establishes our results on the existence of solitary waves solutions for systems (\ref{BF1}) in the Hamiltonian case $b=d$. More exactly, we assume
\begin{equation}\label{abc}
a+c+2b=\frac13, \qquad\quad a,c\leqq 0,\;b\geqq 0, 
\end{equation}
implying thus  $b\geqq \frac16$ and $\frac13 -2b\leqq a\leqq 0$ and $\frac13 -2b\leqq c\leqq 0$.
\begin{teo}\label{main}
Let $\gamma \in (0,1)$ and $\epsilon>0$. For $b=d>0$ and $a, c\leqq 0$, the systems (\ref{soli}) have a non-trivial solitary waves solutions $(\xi, \nu)\in H^\infty(\mathbb R)\times H^\infty(\mathbb R)$ provided:
\begin{enumerate}
\item[(a)] case $\mu_2=+\infty$: if $|\omega|<(1-\gamma)min\{1, \frac{|c|}{b}\}$,
\item[(b)] case $\mu_2$ finite: let $\omega$ such that $|\omega|<(1-\gamma)min\{1, \frac{|c|}{b}\}$ and define
\begin{equation}\label{minim}
\alpha_0\equiv \frac1\gamma -|\omega|-\frac{1}{\beta_0}>0
\end{equation}
where $\beta_0\equiv -4\gamma^4(b|\omega|+\frac1\gamma (a-\frac{1}{\gamma^2}))$. Then existence of solitary waves is insured  by choosing $\mu_2$ satisfying $\frac{\sqrt{\mu}}{\sqrt{\mu_2}}<\gamma^2\alpha_0$.
\end{enumerate}
\end{teo}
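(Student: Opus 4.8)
The plan is to realize the solitary-wave system \eqref{0soli} (in which $J_d=J_b$ since $b=d$) as the Euler--Lagrange system of an action functional and to produce a nontrivial critical point by constrained minimization. Exploiting the Hamiltonian structure available when $b=d$, with impulse $Q(\xi,\nu)=\int_{\R}\xi J_b\nu\,dz$, the system \eqref{0soli} is exactly $\mathrm{grad}\,(H-\omega Q)=0$. Concretely, setting
\[
E_\omega(\xi,\nu)=\frac12\int_{\R}\Big[(1-\gamma)\,\xi J_c\xi+\nu\,\mathcal L_{\mu_2}\nu-2\omega\,\xi J_b\nu\Big]dz-\frac{\epsilon}{2\gamma}\int_{\R}\xi\nu^2\,dz,
\]
the $\xi$-component of $\mathrm{grad}\,E_\omega=0$ is the second equation of \eqref{0soli} and the $\nu$-component is the first. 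The quadratic part $B$ has the symmetric matrix symbol
\[
M(k)=\begin{pmatrix}(1-\gamma)j_c(k)& -\omega j_b(k)\\[2pt] -\omega j_b(k)& \ell(k)\end{pmatrix},
\]
where $j_b(k)=1+\mu b k^2$, $j_c(k)=1+\mu|c|k^2$ are the symbols of $J_b,J_c$ and $\ell(k)$ is the symbol of $\mathcal L_{\mu_2}$. I would then minimize $B$ over the constraint manifold $\{(\xi,\nu)\in H^1(\R)\times H^1(\R):\int_{\R}\xi\nu^2\,dz=\lambda\}$ for a fixed $\lambda>0$, the natural space being $H^1\times H^1$ because $\ell(k),j_c(k),j_b(k)$ grow like $k^2$.

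The decisive step is the coercivity of $B$ on $H^1\times H^1$, and this is where both hypotheses are consumed. Estimating the cross term by $|\omega|\,|\langle J_b\nu,\xi\rangle|\le\tfrac{|\omega|}{2}\langle J_b\xi,\xi\rangle+\tfrac{|\omega|}{2}\langle J_b\nu,\nu\rangle$ reduces matters to the positivity, with a gain of two derivatives, of the two self-adjoint operators $(1-\gamma)J_c-|\omega|J_b$ and $\mathcal L_{\mu_2}-|\omega|J_b$. The symbol of the first is $[(1-\gamma)-|\omega|]+\mu k^2[(1-\gamma)|c|-|\omega|b]$, bounded below by a positive multiple of $1+k^2$ exactly when $|\omega|<1-\gamma$ and $|\omega|<(1-\gamma)|c|/b$, i.e. under the stated bound $|\omega|<(1-\gamma)\min\{1,|c|/b\}$. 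For the second, the symbol is a quadratic in $|k|$ perturbed by the nonlocal term $-\tfrac{\sqrt\mu}{\gamma^2}|k|\coth(\sqrt{\mu_2}|k|)$; when $\mu_2=+\infty$ this term is simply $-\tfrac{\sqrt\mu}{\gamma^2}|k|$ and a discriminant computation reduces positivity to $\alpha_0>0$, which one checks holds under $a\le0$ and the bound on $|\omega|$ (case (a)). For finite $\mu_2$ I would dominate the symbol from below using the elementary inequality $|k|\coth(\sqrt{\mu_2}|k|)\le\tfrac1{\sqrt{\mu_2}}+|k|$, obtaining a genuine quadratic in $|k|$ with leading coefficient $\mu\rho$, $\rho=\tfrac1\gamma(\tfrac1{\gamma^2}-a)-|\omega|b$; requiring its discriminant to be nonpositive is, after simplification, precisely the condition $\tfrac{\sqrt\mu}{\sqrt{\mu_2}}<\gamma^2\alpha_0$ of \eqref{minim}, with $\beta_0=4\gamma^4\rho$. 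This frequency-by-frequency analysis, together with the verification that $\alpha_0,\beta_0$ are positive in the admissible range, is the main obstacle and the true source of the hypotheses.

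With coercivity in hand $B$ is equivalent to the squared $H^1\times H^1$ norm, and I would extract a minimizer by the concentration--compactness principle. Since $B$ is homogeneous of degree two and the constraint of degree three, one has $I_\lambda=\lambda^{2/3}I_1$ with $I_1>0$, and the strict subadditivity of $t\mapsto t^{2/3}$ yields $I_\lambda<I_\alpha+I_{\lambda-\alpha}$ for $0<\alpha<\lambda$, excluding dichotomy. Vanishing is excluded because, by Lions' lemma, a vanishing $H^1$-bounded sequence tends to zero in $L^3(\R)$, forcing $\int\xi_n\nu_n^2\to0$ against the constraint $=\lambda>0$; hence a minimizing sequence concentrates and, after translation, converges strongly to a nontrivial minimizer $(\xi,\nu)$.

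Finally, the Lagrange rule gives $\mathrm{grad}\,B(\xi,\nu)=\theta\,\mathrm{grad}\big(\int\xi\nu^2\big)$ with $\theta=\tfrac{2I_\lambda}{3\lambda}>0$; since both sides are homogeneous, a single rescaling $(\xi,\nu)\mapsto(s\xi,s\nu)$ normalizes $\theta$ to the required value $\tfrac{\epsilon}{2\gamma}$, turning the critical-point relation into \eqref{0soli}. Smoothness then follows by bootstrapping: $\mathcal L_{\mu_2}$ is elliptic of order two and invertible on the relevant frequencies, while $J_b^{-1}$ and $J_c^{-1}$ gain two derivatives, so reading the two equations alternately upgrades $(\xi,\nu)$ from $H^1\times H^1$ to $H^\infty(\R)\times H^\infty(\R)$. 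I expect the compactness and regularity steps to be routine; the genuine difficulty is the positivity of $\mathcal L_{\mu_2}-|\omega|J_b$, which pins down the finite-$\mu_2$ threshold in \eqref{minim}.
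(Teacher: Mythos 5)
Your proposal follows essentially the same route as the paper: minimize the quadratic form $E_{\mu_2}$ over the cubic constraint $\int\xi\nu^2=\lambda$, establish coercivity by reducing (via Cauchy--Schwarz on the cross term) to the frequency-by-frequency positivity of $(1-\gamma)J_c-|\omega|J_b$ and $\mathcal L_{\mu_2}-|\omega|J_b$ --- using exactly the bound $|k|\coth(\sqrt{\mu_2}|k|)\leqq \mu_2^{-1/2}+|k|$ to derive the threshold \eqref{minim} --- then apply concentration--compactness, rescale away the Lagrange multiplier, and bootstrap for regularity. The one step you underestimate is the exclusion of dichotomy: strict subadditivity of $\lambda\mapsto I_\lambda$ must be combined with the approximate splitting $E_{\mu_2}(\mathbf{h}_{n_k})=E_{\mu_2}(\mathbf{h}_{k,1})+E_{\mu_2}(\mathbf{h}_{k,2})+O(\epsilon)$, and since $\mathcal L_{\mu_2}$ is nonlocal the cross terms between the two separating bumps do not vanish for support reasons alone --- the paper controls them with the Calder\'on and Coifman--Meyer commutator theorems, so this part of the compactness argument is not as routine as you suggest.
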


\begin{obs}\label{rnon} The statement in Theorem \ref{main} deserves to be clarified at  least in some points.
\begin{enumerate}
\item [(1)] The positivity of $\alpha_0$ in (\ref{minim}) is deduced from the condition on the speed velocity $\omega$, $|\omega|<(1-\gamma)min\{1, \frac{|c|}{b}\}$.
\item [(2)] For a fixed $\gamma\in (0,1)$,  in (\ref{BF1}), our approach only gives the existence of solitary waves solutions by choosing the depth parameter $\mu_2$ in an interval of the form $(\mu_0, +\infty)$.
\item[(3)] The regularity of the solitary wave solutions is proved in Theorems \ref{decay} and  \ref{decayfi} below.
\end{enumerate}
\end{obs}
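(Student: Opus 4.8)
The plan is to exploit the Hamiltonian structure available when $b=d$ and to realize the solitary waves as constrained minimizers recovered by concentration-compactness. Writing $U=(\xi,\nu)$, one checks that for $b=d$ the traveling-wave system \eqref{0soli} is exactly the Euler--Lagrange system of the action $S_\omega=H-\omega Q$, where $Q(U)=\int_{\mathbb R}\xi\,J_b\nu\,dz$ is the momentum attached to translation invariance. Concretely,
$$S_\omega(U)=\tfrac12 B_\omega(U,U)-N(U),\qquad N(U)=\frac{\epsilon}{2\gamma}\int_{\mathbb R}\xi\,\nu^2\,dz,$$
$$B_\omega(U,U)=(1-\gamma)\int_{\mathbb R}(J_c\xi)\,\xi\,dz+\int_{\mathbb R}(\mathcal L_{\mu_2}\nu)\,\nu\,dz-2\omega\int_{\mathbb R}(J_b\xi)\,\nu\,dz,$$
and a direct variation recovers the two equations of \eqref{0soli}. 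Fixing $\lambda>0$ (replacing $\xi$ by $-\xi$ if needed so that the constraint set is non-empty), I would study
$$I_\lambda=\inf\{\,B_\omega(U,U): U\in H^1(\mathbb R)\times H^1(\mathbb R),\ N(U)=\lambda\,\}.$$

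The crucial input, and the step I expect to be the main obstacle, is the coercivity of $B_\omega$ on $H^1\times H^1$ under \eqref{abc} together with the restrictions on $\omega$ and $\mu_2$. On the Fourier side $B_\omega$ has the Hermitian symbol
$$M(k)=\begin{pmatrix}(1-\gamma)(1+\mu|c|k^2) & -\omega(1+\mu b k^2)\\ -\omega(1+\mu b k^2) & m_{\mu_2}(k)\end{pmatrix},$$
with $m_{\mu_2}(k)=\frac1\gamma-\frac{\sqrt\mu}{\gamma^2}|k|\coth(\sqrt{\mu_2}|k|)+\frac{\mu}{\gamma}\big(\frac{1}{\gamma^2}\coth^2(\sqrt{\mu_2}|k|)-a\big)k^2$, which is positive with quadratic growth since $a\le0$ and $\coth\ge1$. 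I would prove $M(k)\ge c_0(1+k^2)\,\mathrm{Id}$ for some $c_0>0$, giving $B_\omega(U,U)\simeq\|\xi\|_{H^1}^2+\|\nu\|_{H^1}^2$. The diagonal entries are positive precisely because $c<0$ (so the $\xi$-entry grows like $k^2$) and $a\le0$; the positivity of $\det M$ then splits by frequency. At $k=0$ one needs $(1-\gamma)m_{\mu_2}(0)>\omega^2$: in case (a), $m_{+\infty}(0)=\frac1\gamma$ and $|\omega|<1-\gamma$ suffices, while in case (b), $m_{\mu_2}(0)=\frac1\gamma-\frac{1}{\gamma^2}\frac{\sqrt\mu}{\sqrt{\mu_2}}$, and the condition $\frac{\sqrt\mu}{\sqrt{\mu_2}}<\gamma^2\alpha_0$ of \eqref{minim} is exactly what forces $m_{\mu_2}(0)>|\omega|\ge\omega^2/(1-\gamma)$. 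The large-$|k|$ regime is governed by the leading $k^4$ coefficient of $\det M$, whose positivity reduces to $\omega^2 b^2<\frac{(1-\gamma)|c|}{\gamma}\big(\frac{1}{\gamma^2}-a\big)$ and is ensured by $|\omega|<(1-\gamma)|c|/b$; the remaining bounded band of frequencies, where the finite-$\mu_2$ correction to $\coth$ and the constant $\beta_0$ enter, is then handled by a continuity argument.

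With coercivity in hand the minimization is standard. A minimizing sequence $(U_n)$ for $I_\lambda$ is bounded in $H^1\times H^1$, and the one-dimensional Gagliardo--Nirenberg estimate gives $|N(U)|\lesssim B_\omega(U,U)^{3/2}$, so $I_\lambda>0$ and vanishing is excluded (a vanishing sequence would force $N(U_n)\to0\ne\lambda$). The homogeneities $B_\omega(sU,sU)=s^2B_\omega(U,U)$, $N(sU)=s^3N(U)$ yield $I_\lambda=\lambda^{2/3}I_1$ and hence the strict subadditivity $I_\lambda<I_{\lambda'}+I_{\lambda-\lambda'}$, ruling out dichotomy; Lions' concentration-compactness principle then produces, after a translation, a subsequence converging strongly to a minimizer $U_0\ne0$. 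The Lagrange multiplier rule gives $B_\omega'(U_0)=\theta\,N'(U_0)$, and pairing with $U_0$ together with Euler's identity $\langle N'(U_0),U_0\rangle=3N(U_0)=3\lambda$ forces $\theta=B_\omega(U_0,U_0)/(3\lambda)\ne0$; the rescaled pair $U=\theta U_0$ then solves \eqref{0soli} and is the desired non-trivial solitary wave. Finally, the $H^\infty\times H^\infty$ regularity is obtained by bootstrapping in \eqref{0soli}, using the smoothing of $J_b^{-1},J_c^{-1}$ and the ellipticity of $\mathcal L_{\mu_2}$, as carried out in Theorems \ref{decay} and \ref{decayfi}.
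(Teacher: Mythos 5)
Your proposal is in substance a sketch of the proof of Theorem \ref{main} by the same overall route the paper takes (constrained minimization of the quadratic form on the level set $F(\xi,\nu)=\lambda$, concentration-compactness, Lagrange multiplier and cubic rescaling), but it has a genuine gap located exactly at the point that items (1) and (2) of the remark are flagging. You reduce coercivity to positivity of the Hermitian symbol matrix $M(k)$, verify it at $k=0$ and as $|k|\to\infty$, and dispose of ``the remaining bounded band of frequencies'' by ``a continuity argument''. Continuity does not do this: knowing $\det M(k)>0$ at $k=0$ and for large $|k|$ says nothing about intermediate $k$, and the minimum of the relevant symbol is attained at an \emph{interior} frequency $x_0>0$. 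The quantity $\alpha_0$ of \eqref{minim} is precisely that minimum value $f(x_0)=\frac1\gamma-|\omega|-\frac{1}{4\gamma^4\beta_0}$ of the symbol \eqref{f} of $\mathcal L_{+\infty}-|\omega|J_b$, and its positivity is proved in Lemma \ref{positi} by showing that the auxiliary polynomial $M(\omega)=4(1-\gamma|\omega|)[1-b\gamma^3|\omega|-a\gamma^2]-1$ is positive on the admissible range of $\omega$, via convexity in $\omega$, location of its critical point outside the admissible interval, and an endpoint check using $a+c+2b=\frac13$. That computation is the content of point (1) of the remark and cannot be replaced by continuity. Relatedly, in case (b) you attach the hypothesis $\frac{\sqrt\mu}{\sqrt{\mu_2}}<\gamma^2\alpha_0$ only to the zero frequency, whereas the paper uses it uniformly in $k$ through the elementary bound $x\coth(\sqrt{\mu_2}x)\leqq\frac{1}{\sqrt{\mu_2}}+x$, which yields $f_{\mu_2}(x)\geqq f_{+\infty}(x_0)-\frac{\sqrt{\mu}}{\gamma^2\sqrt{\mu_2}}$ for \emph{all} $x$; the threshold is the global minimum $\alpha_0$, and this is exactly why (point (2) of the remark) the method only produces solitary waves for $\mu_2$ in a half-line $(\mu_0,+\infty)$.

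A second substantive omission concerns dichotomy. Strict subadditivity $I_\lambda<I_\theta+I_{\lambda-\theta}$ is the easy half; the hard half is showing that the quadratic form nearly splits, $E({\bf h}_{n_k})=E({\bf h}_{k,1})+E({\bf h}_{k,2})+O(\epsilon)$, when the minimizing sequence is cut into two pieces with far-apart supports. For the differential terms this is routine, but $|D|$, $|D|\coth(\sqrt{\mu_2}|D|)$ and $\coth^2(\sqrt{\mu_2}|D|)\partial_x^2$ are nonlocal, and the cross terms such as $\int_{\mathbb R}\varphi_k\nu_{n_k}\,|D|(\zeta_k\nu_{n_k})\,dx$ must be shown to be $O(\epsilon)$ via commutator estimates (Calder\'on, Coifman--Meyer); this occupies a substantial part of Sections 2 and 3 and is absent from your sketch. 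Two minor slips: $m_{\mu_2}(0)$ also contains the positive term $\frac{\mu}{\gamma^3\mu_2}$ which you dropped (harmless), and with your normalization the Lagrange multiplier is $\theta=2B_\omega(U_0,U_0)/(3\lambda)$, not $B_\omega(U_0,U_0)/(3\lambda)$.
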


The proof of Theorem \ref{main} is based on the Concentration-Compactness Principle  (see Lions \cite{Lions}). Thus, we consider the family of minimization problems
\begin{equation}\label{I} 
I_\lambda=\inf\{E_{\mu_2}(\xi, \nu): (\xi, \nu)\in H^1(\mathbb R)\times H^1(\mathbb R)\;\;\text{and}\;\; F(\xi, \nu)=\lambda\}
\end{equation}
where $\lambda>0$ and the functionals $E_{\mu_2}$ and $F$ are defined as
\begin{equation}\label{E} 
E_{\mu_2}(\xi, \nu)=\int_{\mathbb R} \frac{(1-\gamma)}{2} \xi J_c \xi+ \frac12 \nu\mathcal L_{\mu_2} v-\omega \xi J_b \nu dx
\end{equation}
and $ F(\xi, \nu)=r \int_{\mathbb R} \xi \nu^2 dx$, with $r=\frac{\epsilon}{2\gamma}$. For the case $\mu_2=+\infty$, $E_{+\infty}$ is defined in the same form as $E_{\mu_2}$ in (\ref{E}) by changing  the operator $\mathcal L_{\mu_2}$ by $\mathcal L_{+\infty}$ defined in (\ref{Linf}).

\vspace{0.3cm}
The paper is organized as follows. In the next section we prove Theorem \eqref{main} in the case $\mu_2=+\infty$ while we treat the case where $\mu_2$ is finite in Section 3. We prove in Section 4 the algebraic decay of the solitary waves.

Finally the last two Sections are devoted to two systems that are the two ways propagation versions of the classical Intermediate Long Wave (ILW) and Benjamin-Ono (BO) equations. Using the parameters introduced above, the ILW regime corresponds to $\mu\sim\epsilon\ll 1$ and $\mu_2\sim 1$ (and thus $\delta^2\sim\mu\sim \epsilon_2).$

If furthermore the lower layer depth is infinite, that is $\delta=0$ (and thus $\mu_2=\infty,\; \epsilon_2=0),$ one obtains the BO regime. Again we refer to \cite{BLS} for details on the justification of those asymptotic regimes.

The Intermediate Long wave (one parameter family of) systems  write in one spatial dimension
\begin{equation}\label{W1}
\left\{\begin{array}{lll}
\mathcal W\partial_t\zeta +\mathcal Z\partial_x v -\frac{\epsilon}{\gamma}\partial_x(\zeta v)=0\\
\partial_t v+ (1-\gamma) \partial_x \zeta -\frac{\epsilon}{2\gamma}\partial_x(v^2)=0,
\end{array}\right.
\end{equation}
where $\mathcal W$ and $\mathcal Z$ are self-adjoint operators defined for $\gamma\in (0,1)$, $\beta> 1$, $\mu, \mu_2,  \varepsilon>0$ by
\begin{equation}\label{W2}
\mathcal W=1+g(D),\qquad \mathcal Z=\frac{1}{\gamma}\Big(1+\frac{\beta-1}{\gamma} \sqrt{\mu} |D| \coth(\sqrt{\mu_2}|D|)\Big ),
\end{equation}
with $|D|=\mathcal H\partial_x$, where $\mathcal H$ represents the Hilbert transform, and 
$$
g(D)=\frac{\beta}{\gamma}\sqrt{\mu} |D| \coth(\sqrt{\mu_2}|D|).
$$

On the other hand the Benjamin-Ono systems write, again in spatial dimension one:

\begin{equation}\label{BO1}
\left\{\begin{array}{lll}
\mathcal D\partial_t\zeta +\mathcal B\partial_x v -\frac{\epsilon}{\gamma}\partial_x(\zeta v)=0\\
\partial_t v+ (1-\gamma) \partial_x \zeta -\frac{\epsilon}{2\gamma}\partial_x(v^2)=0,
\end{array}\right.
\end{equation}
where $\mathcal D$ and $\mathcal B$ are self-adjoint operators defined for $\gamma\in (0,1)$, $\beta> 1$ and $\mu, \varepsilon>0$ by
\begin{equation}\label{BO2}
\mathcal D=1+\frac{\beta}{\gamma} \sqrt{\mu} |D|,\qquad \mathcal B=\frac{1}{\gamma}\Big(1+\frac{\beta-1}{\gamma} \sqrt{\mu} |D|\Big ),
\end{equation}
with $|D|=\mathcal H\partial_x$, where $\mathcal H$ represents the Hilbert transform.

For both systems $\beta>1$ is  a free parameter stemming from the use of the BBM trick.

Long time existence (that is on time scales of order $1/\epsilon$) of the Cauchy problem, in spatial dimension one and two has been obtained for both families of systems by Li Xu (\cite{Xu}).

When one restricts to wave propagating in one direction, all systems \eqref{W1}, resp. \eqref{BO1} yield the Intermediate Long Wave Equation (ILW) (see \cite{KKD})  resp. the Benjamin-Ono equation (BO) (see \cite{B}). Both of those equations are completely integrable and have explicit solitary waves, algebraically decaying at infinity for BO and exponentially for ILW.  We refer to \cite{KS} and the references therein for details. The BO solitary wave is shown to be unique (modulo obvious symmetries) in \cite{AT2}. A similar result for the ILW solitary wave is proven in \cite{AT}.

The asymptotic stability of the BO solitary waves (and of explicit multi-solitons) is established in \cite{KM}. The orbital stability of the ILW solitary wave is proven in \cite{AB, ABH}, see also \cite{An, BS, BSS}. We do not know of asymptotic stability
results for the 1 or N-soliton similar to the corresponding ones for the BO equation.
Those results should be in some sense easier than the corresponding ones for BO
since the exponential decay of the solitons should make the spectral analysis of the
linearized operators easier.

On the other hand we do not know of previous existence results for ILW or BO {\it systems} in (\ref{W1}) and (\ref{BO1}), respectively. In Sections 5 and 6 we prove the existence of even solitary waves for both the systems by implicit functions theorem arguments and by perturbation theory of closed linear operators.

In section 8 we establish some open problems associated to the systems B-FD (\ref{BF1}), (\ref{W1}) and (\ref{BO1}). In Appendix A we prove the global existence of small solutions for (\ref{BF1}) for the cases  $b=d>0$, $a\leqq 0$ and $c<0$.

\vspace{0.3cm}
\noindent{\bf Notations.} 
The norm of  standard Sobolev spaces $H^s(\mathbb R)$ is denoted $||\cdot ||_s$. while the norm of the Lebesgue space $L^2(\mathbb R)$ is denoted $||\cdot||$, $s\geqq 0$. We will denote $|\cdot|_p$ the norm in the Lebesgue space $L^p(\mathbb R),\; 1\leq p\leq \infty$ and  $\langle\cdot , \cdot\rangle$ denotes the scalar product in $L^2(\mathbb R)$. We will denote $\hat {f}$  the Fourier transform of a tempered distribution $f.$ For any $s\in \R,$ we define $|D|^s f$ by its Fourier transform $\widehat{|D|^s f}(y)=|y|^s \hat{f}(y).$ Similarly, we define $L_1=|D| \coth(\sqrt{\mu_2}|D|)$ and $L_2= \coth(\sqrt{\mu_2}|D|)$ by its Fourier transform $\widehat{L_1 f}(y)=|y| \coth(\sqrt{\mu_2}|y|) \hat{f}(y)$ and $\widehat{L_2 f}(y)=\coth(\sqrt{\mu_2}|y|) \hat{f}(y)$, respectively.

\section{Existence of solitary wave solutions for the Full Dispersion-Boussinesq system when $\mu_2=+\infty$}

In  this section we give the proof of the existence of solitary waves solutions of (\ref{soli}) in the case $\mu_2=+\infty$. Thus, we will find a non-trivial solution, $(\xi, \nu)$, for the system
\begin{equation}\label{soli}
 \left\{\begin{array}{lll}
-\omega J_b \xi +\mathcal L_{+\infty}\nu =2r\xi \nu\\
-\omega J_b\nu+ (1-\gamma) J_c\xi =r\nu^2,
\end{array}\right.
\end{equation}
with $\gamma \in (0,1)$ and $r=\frac{\epsilon}{2\gamma}$. 

Our approach for the existence will be via the Concentration-Compactness Principle,  thus we call $\{(\xi_n, \nu_n)\}_{n\geqq 1}$ in $H^1(\mathbb R)\times H^1(\mathbb R)$ a minimizing sequence for $I_\lambda$ if it satisfies $F(\xi_n, \nu_n)=\lambda$ for all $n$ and $\lim_{n\to \infty} E_{+\infty}(\xi_n, \nu_n)= I_\lambda$

For  $ a, b, c$ satisfying the relations in (\ref{abc}), the following Lemma establishes the coercivity property of the functional $E_{+\infty}$.

\begin{prop}\label{propert}
Let $\gamma \in (0,1)$ fixed. Then,  

 \begin{enumerate}
\item[(a)] $E_{+\infty}: H^1(\mathbb R)\times H^1(\mathbb R)\to \mathbb R$ is well defined and satisfies
$$
E_{+\infty}(\xi, \nu)\leqq c_0(\|J_c^{1/2} \xi\|^2 +\|\nu\|_1^2)+c_1(\|J_b^{1/2} \xi\|^2 +\|J_b^{1/2}\nu\|)^2). 
$$
\item[(b)]  For $|\omega|<(1-\gamma)min\{1, \frac{|c|}{b}\}$, we have
\begin{equation}\label{coer}
E_{+\infty}(\xi, \nu)\geqq C \|(\xi, \nu)\|_{1\times 1}^2
\end{equation}
where $C=C(b, c, \mu, |\omega|, \gamma)>0$.
\item[(c)]  $I_\lambda\in (0, +\infty)$.
\item[(d)] All minimizing sequences for $I_\lambda$ are bounded in $H^1(\mathbb R)\times H^1(\mathbb R)$.
\item[(e)]  For all $\theta\in (0,\lambda)$, we have the sub-additivity property of $I_\lambda$,
\begin{equation}\label{sub}
I_\lambda<I_\theta+I_{\lambda-\theta}.
\end{equation}

\end{enumerate}
\end{prop}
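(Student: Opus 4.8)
The proof of Proposition \ref{propert} rests on viewing $E_{+\infty}$ as a quadratic form and passing to Fourier variables, where the three operators $J_c$, $J_b$, $\mathcal L_{+\infty}$ become the real multipliers $P(y)=1+\mu|c|y^2$, $Q(y)=1+\mu b y^2$ and $R(y)=\frac1\gamma-\frac{\sqrt{\mu}}{\gamma^2}|y|+\frac\mu\gamma(\frac1{\gamma^2}-a)y^2$ (recall $a,c\leqq 0$). For (a) I would first integrate by parts to write $E_{+\infty}(\xi,\nu)=\frac{1-\gamma}{2}\langle J_c\xi,\xi\rangle+\frac12\langle\mathcal L_{+\infty}\nu,\nu\rangle-\omega\langle J_b^{1/2}\xi,J_b^{1/2}\nu\rangle$, then bound each term by Cauchy--Schwarz together with the elementary symbol estimates $P(y),Q(y),|R(y)|\leqq C(1+y^2)$; well-definedness on $H^1(\mathbb R)\times H^1(\mathbb R)$ and the stated upper bound follow directly.

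Part (b) is the heart of the matter. In Fourier variables $E_{+\infty}(\xi,\nu)=\int_{\mathbb R}\big[\tfrac{1-\gamma}{2}P|\hat\xi|^2+\tfrac12 R|\hat\nu|^2-\omega Q\,\mathrm{Re}(\hat\xi\overline{\hat\nu})\big]\,dy$, so it suffices to show that the symmetric symbol matrix $M(y)=\begin{pmatrix}\frac{1-\gamma}{2}P & -\frac\omega2 Q\\ -\frac\omega2 Q & \frac12 R\end{pmatrix}$ satisfies $M(y)\geqq C(1+y^2)I$ for all $y$. Two points need care. First, the genuinely indefinite term $-\frac{\sqrt{\mu}}{\gamma^2}|y|$ in $R$: I would absorb it using $|y|\leqq\frac1{2\delta}+\frac\delta2 y^2$ and check that the remaining coefficients of $1$ and of $y^2$ stay positive for a suitable $\delta$; since $a\leqq 0<\frac{3}{4\gamma^2}$ such a $\delta$ always exists, giving $R(y)\geqq\kappa(1+y^2)$ with $\kappa>0$. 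Second, the cross term: comparing the two Bessel symbols yields $Q(y)\leqq \min\{1,|c|/b\}^{-1}P(y)$, and combined with the coercivity of $R$ a Young inequality with a well-chosen weight dominates $|\omega|Q|\hat\xi||\hat\nu|$ by the two diagonal entries precisely when $|\omega|<(1-\gamma)\min\{1,|c|/b\}$, which is where that threshold enters; equivalently one verifies the determinant condition $(1-\gamma)P(y)R(y)>\omega^2 Q(y)^2$ at every frequency.

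Part (c) then follows quickly. By the one-dimensional Sobolev embedding, $|F(\xi,\nu)|\leqq r\|\xi\|\,|\nu|_\infty\,\|\nu\|\leqq C\|(\xi,\nu)\|_{1\times1}^3$, so the constraint $F(\xi,\nu)=\lambda$ together with (b) gives $\lambda\leqq C\,(E_{+\infty}(\xi,\nu)/C)^{3/2}$, hence $E_{+\infty}(\xi,\nu)\geqq c\lambda^{2/3}$ and $I_\lambda\geqq c\lambda^{2/3}>0$; finiteness is obtained by evaluating $E_{+\infty}$ on a single admissible pair gotten by rescaling any fixed $(\xi_0,\nu_0)$ with $\int_{\mathbb R}\xi_0\nu_0^2>0$ so that $F=\lambda$. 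Part (d) is then automatic: if $E_{+\infty}(\xi_n,\nu_n)\to I_\lambda<\infty$, then (b) bounds $\|(\xi_n,\nu_n)\|_{1\times1}^2\leqq E_{+\infty}(\xi_n,\nu_n)/C$ uniformly in $n$.

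Finally, for the strict subadditivity (e) I would exploit the exact homogeneities of the two functionals: $E_{+\infty}(s\xi,s\nu)=s^2E_{+\infty}(\xi,\nu)$ while $F(s\xi,s\nu)=s^3F(\xi,\nu)$. Rescaling an admissible pair for $I_\lambda$ by $s=(\theta/\lambda)^{1/3}$ gives $I_\theta\leqq(\theta/\lambda)^{2/3}I_\lambda$, and rescaling back gives the reverse inequality, so $I_\lambda=\lambda^{2/3}I_1$ with $I_1>0$ by (c); since $t\mapsto t^{2/3}$ is strictly subadditive on $(0,\infty)$ one has $\theta^{2/3}+(\lambda-\theta)^{2/3}>\lambda^{2/3}$, that is $I_\lambda<I_\theta+I_{\lambda-\theta}$. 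The main obstacle is clearly part (b): the indefinite lower-order dispersion $-\frac{\sqrt{\mu}}{\gamma^2}|D|$ forces the interpolation step, while the coupling through $J_b$ produces the speed restriction, and it is the careful bookkeeping of constants—so that exactly $|\omega|<(1-\gamma)\min\{1,|c|/b\}$ emerges and the symbol matrix stays positive at \emph{every} frequency—on which the whole scheme depends.
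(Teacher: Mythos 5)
Your overall strategy coincides with the paper's: parts (a), (c), (d), (e) are handled exactly as there (Cauchy--Schwarz for (a), the bound $\lambda\leqq\|(\xi,\nu)\|_{1\times 1}^3$ plus coercivity for (c)--(d), and the homogeneity $I_{\tau\lambda}=\tau^{2/3}I_\lambda$ for (e)), and part (b) is, as you say, reduced to the frequency-by-frequency positivity of the $2\times 2$ symbol matrix; the paper splits the off-diagonal term by the Cauchy--Schwarz inequality for the positive form $J_b$ and then proves $(1-\gamma)P\geqq|\omega|Q$ and $R\geqq|\omega|Q$ separately, which is a sufficient form of your determinant condition.

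There is, however, a genuine gap at the decisive point of (b). What you actually verify is only $R(y)\geqq\kappa(1+y^2)$ for some $\kappa>0$ (which indeed needs nothing beyond $a\leqq 0$), but what the argument requires --- whether phrased as a weighted Young inequality or as the determinant condition $(1-\gamma)PR>\omega^2Q^2$ --- is the quantitative bound $R(y)-|\omega|Q(y)\geqq \kappa'(1+y^2)$, or at least $\omega^2 m\,Q(y)\leqq(1-\gamma)R(y)$ with $m=\min\{1,|c|/b\}^{-1}$, uniformly in $y$. This does not follow from $R\geqq\kappa(1+y^2)$ with an unquantified $\kappa$: at high frequency one needs $\beta_0=\frac1\gamma(\frac1{\gamma^2}-a)-b|\omega|>0$, and at the minimizing frequency one needs $\frac1\gamma-|\omega|-\frac{1}{4\gamma^4\beta_0}>0$. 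Showing that these hold exactly under $|\omega|<(1-\gamma)\min\{1,|c|/b\}$ is the content of the paper's Lemma \ref{positi}: one proves that the convex function $M(\omega)=4(1-\gamma|\omega|)[1-b\gamma^3|\omega|-a\gamma^2]-1$ is positive on the admissible interval by locating its critical point $\omega_0$ outside that interval and checking positivity at the endpoint, a computation that uses the structural constraint $a+c+2b=\frac13$ of (\ref{abc}) in an essential way. Your ``one verifies the determinant condition at every frequency'' is precisely the step that needs a proof, and as written the proposal does not supply it.
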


The most delicate statement in the Proposition \ref{propert} is item (b). The next Lemma shows firstly that the functional $E_{+\infty}$ is non-negative.

\begin{lema}\label{positi}
Let $\gamma \in (0,1)$ fixed and $a,b,c$ satisfying the relations in (\ref{abc}). Then, for $|\omega|<(1-\gamma)min\{1, \frac{|c|}{b}\}$, we have $E_{+\infty}(\xi, \nu)\geqq 0$ for 
all $(\xi, \nu) \in H^1(\mathbb R)\times H^1(\mathbb R)$.
\end{lema}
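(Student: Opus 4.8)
The plan is to diagonalize $E_{+\infty}$ on the Fourier side and reduce the claimed nonnegativity to a pointwise (in frequency) condition. Since $J_c$, $J_b$ and $\mathcal L_{+\infty}$ are all even, self-adjoint Fourier multipliers, Plancherel's identity lets me write, for real $\xi,\nu\in H^1(\mathbb{R})$,
\begin{equation*}
E_{+\infty}(\xi,\nu)=\int_{\mathbb{R}}\Big[\tfrac{1-\gamma}{2}(1+\mu|c|y^2)|\hat\xi(y)|^2+\tfrac12\,\ell(y)|\hat\nu(y)|^2-\omega(1+\mu b y^2)\,\mathrm{Re}\big(\overline{\hat\xi(y)}\,\hat\nu(y)\big)\Big]\,dy,
\end{equation*}
where $\ell(y)=\frac1\gamma-\frac{\sqrt\mu}{\gamma^2}|y|+\frac\mu\gamma\big(\frac1{\gamma^2}-a\big)y^2$ is the symbol of $\mathcal L_{+\infty}$ and I have used $c\leq0$ to write $1-\mu c y^2=1+\mu|c|y^2$. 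For each $y$ the integrand is the Hermitian form attached to the real symmetric matrix
\begin{equation*}
M(y)=\begin{pmatrix}\tfrac{1-\gamma}{2}(1+\mu|c|y^2)&-\tfrac\omega2(1+\mu b y^2)\\[3pt]-\tfrac\omega2(1+\mu b y^2)&\tfrac12\ell(y)\end{pmatrix}.
\end{equation*}
Because all symbols are even, concentrating the Fourier support of a real pair near $\pm y_0$ shows that $E_{+\infty}\geq0$ on $H^1(\mathbb{R})\times H^1(\mathbb{R})$ if and only if $M(y)\succeq0$ for every $y\in\mathbb{R}$; this is the reformulation I would work with.

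First I would dispatch the diagonal conditions. The $(1,1)$ entry is positive since $\gamma\in(0,1)$ and $b,|c|\geq0$. For the $(2,2)$ entry I must check $\ell(y)\geq0$: viewing $\ell$ as a quadratic in $t=|y|$ with positive leading coefficient $\frac\mu\gamma\big(\frac1{\gamma^2}-a\big)$ (positive since $a\leq0<\frac1{\gamma^2}$), its discriminant equals $\frac\mu{\gamma^2}\big(4a-\frac3{\gamma^2}\big)<0$, so in fact $\ell(y)>0$ for all $y$; completing the square even yields the uniform bound $\ell(y)\geq\frac{3}{4\gamma}$.

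The main obstacle is the determinant condition, which is exactly where the hypothesis on $\omega$ enters. I must prove
\begin{equation*}
(1-\gamma)\,(1+\mu|c|y^2)\,\ell(y)\;\geq\;\omega^2(1+\mu b y^2)^2\qquad(y\in\mathbb{R}).
\end{equation*}
Writing $m=\min\{1,|c|/b\}$ so that $\omega^2<(1-\gamma)^2m^2$, it suffices to establish the $\omega$-free inequality $(1+\mu|c|y^2)\ell(y)\geq(1-\gamma)m^2(1+\mu b y^2)^2$, which I would verify by comparing the two sides as functions of $y$, the single nonpolynomial term $-\frac{\sqrt\mu}{\gamma^2}|y|$ being absorbed into the $\frac1\gamma$ and $y^2$ terms of $\ell$ by Young's inequality. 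The constant terms give $\frac1\gamma\geq(1-\gamma)m^2$, immediate from $\gamma<1$ and $m\leq1$; the delicate balance is at the leading order $y^4$, which reduces to $\frac{|c|}\gamma\big(\frac1{\gamma^2}-a\big)\geq(1-\gamma)m^2b^2$. Here I expect to use $mb=\min\{b,|c|\}\leq|c|$ to bound $m^2b^2$ by $c^2$, together with the relation $2b=\frac13+|a|+|c|$ from \eqref{abc} (so that a large $b$ forces $a$, hence $\frac1{\gamma^2}-a$, correspondingly large) to make the left-hand quartic coefficient dominate; the intermediate powers of $y$ follow from the same book-keeping. This determinant estimate, and in particular the careful treatment of the nonlocal $|D|$ term and of the high-frequency quartic balance, is the step I anticipate will require the most care.
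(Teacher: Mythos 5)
Your Plancherel reformulation is sound, and it is in fact sharper than the paper's argument: the paper bounds the cross term by Cauchy--Schwarz, $|\omega\langle \xi,J_b\nu\rangle|\leqq \frac{|\omega|}{2}\langle J_b\xi,\xi\rangle+\frac{|\omega|}{2}\langle J_b\nu,\nu\rangle$, and then checks the two scalar symbols $(1-\gamma)(1+\mu|c|y^2)-|\omega|(1+\mu b y^2)$ and $\ell(y)-|\omega|(1+\mu b y^2)$ separately (a diagonal-dominance sufficient condition), whereas your determinant condition is the exact necessary-and-sufficient one. The diagonal checks and the bound $\ell(y)\geqq \frac{3}{4\gamma}$ are correct.

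The gap is in the step you yourself flag as delicate: the quartic balance $\frac{|c|}{\gamma}\bigl(\frac{1}{\gamma^2}-a\bigr)\geqq (1-\gamma)m^2b^2$. Your proposed mechanism --- that $2b=\frac13+|a|+|c|$ forces $a$ (hence $\frac{1}{\gamma^2}-a$) to be large when $b$ is large --- is false: under (\ref{abc}) one may take $a=0$ and let $|c|=2b-\frac13$ carry all the growth. Concretely, take $\gamma=\frac12$, $a=0$, $c=-200$, $b=\frac16+100$; then $m=1$, the left side equals $\frac{200}{1/2}\cdot 4=1600$, while the right side equals $\frac12 b^2\approx 5017$, so the inequality fails, and with it the $\omega$-free determinant estimate. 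Worse, since your reformulation is exact, this is not a lossy-bound problem: for these parameters and $|\omega|$ close to $(1-\gamma)m=\frac12$ the leading-order determinant condition $(1-\gamma)\frac{|c|}{\gamma}\bigl(\frac{1}{\gamma^2}-a\bigr)\geqq\omega^2b^2$ reads $800\geqq\omega^2\cdot 10033$ and fails for $|\omega|>0.29$, so $M(y)$ is indefinite at high frequency and $E_{+\infty}$ takes negative values (e.g.\ on $\xi=t\nu$ with $t\approx\omega b/((1-\gamma)|c|)$ and $\nu$ spectrally concentrated at large $|y|$). The same obstruction is present, but hidden, in the paper's proof: there the leading coefficient of the symbol $f$ of $\mathcal L_{+\infty}-|\omega|J_b$ is $\mu\beta_0$ with $\beta_0=\frac{1}{\gamma^3}+\frac{|a|}{\gamma}-b|\omega|$, and the proof simply asserts that "$\beta_0$ needs to be strictly positive" without deriving it from (\ref{abc}) and the speed condition (it is negative in the example above). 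So your plan cannot be completed as written for the full parameter range of the Lemma; closing it requires either an additional hypothesis of the type $b|\omega|<\frac{1}{\gamma^3}(1-a\gamma^2)$ (equivalently $\beta_0>0$, which is what the paper implicitly uses) or a restriction on the size of $b,|c|$, not merely more careful bookkeeping.
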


\begin{proof} To start with, since $J_b$ is a positive operator we obtain from the Cauchy-Schwarz inequality for positive quadratic forms that
\begin{equation}\label{CS}
\Big |\omega \int_{\mathbb R} \xi J_b \nu dx \Big |\leqq |\omega|\langle J_b \xi, \xi\rangle^{1/2}\langle J_b \nu, \nu\rangle^{1/2}\leqq \frac{|\omega|}{2}\langle J_b \xi, \xi \rangle +\frac{|\omega|}{2}\langle J_b \nu, \nu\rangle.
\end{equation}
Next, we show the following two inequalities under  the condition $|\omega|\leqq (1-\gamma)\min\{1, \frac{|c|}{b}\}$:
\begin{equation}\label{first}
\frac{|\omega|}{2}\langle J_b \xi, \xi \rangle \leqq \frac{(1-\gamma)}{2} \langle J_c \xi, \xi \rangle,\;\;\text{and},\;\;
\frac{|\omega|}{2}\langle J_b \nu, \nu \rangle \leqq \frac{1}{2} \langle \mathcal L_{\mu_2} \nu, \nu \rangle
\end{equation}
We divide the proof in various steps:
\begin{enumerate}
\item[(1)] since the second order polynomial $p(x)= [-(1-\gamma)\mu c-|\omega| \mu b] x^2 + (1-\gamma)-|\omega|$ achieves positive values for all $x\in \mathbb R$, under the conditions $|\omega|\leqq (1-\gamma)$ and $b |\omega|\leqq (1-\gamma)|c|$, we obtain from Plancherel's theorem
$$
\langle [(1-\gamma) J_c -|\omega| J_b] \xi, \xi \rangle=\int_{\mathbb R} |\widehat{\xi}(x)|^2 p(x) dx\geqq 0.
$$

\item[(2)] Since the symbol of $\mathcal L_{\mu_2}-|\omega| J_b$ is given by the even function,
\begin{equation}\label{f}
f(x)= \frac{1}{\gamma}-\frac{\sqrt{\mu}}{\gamma^2}|x|-\mu \Big[ b|\omega|+\frac{1}{\gamma}\Big(a-\frac{1}{\gamma^2}\Big)\Big] x^2,
\end{equation}
we will show that under the condition $|\omega|\leqq (1-\gamma)\min\{1, \frac{|c|}{b}\}$, $\min_{x\in \mathbb R} f(x)=f(x_0)>0$, with $x_0$ being the only positive critical point for $f$ and 
\begin{equation}\label{fmi}
f(x_0)= \frac1\gamma -|\omega|-\frac{1}{4\gamma ^4\beta_0}
\end{equation}
where $\beta_0\equiv -[b|\omega|+\frac1\gamma (a-\frac{1}{\gamma^2})]$ (see (\ref{minim})). In fact, for $x>0$ we have that the only critical point of $f(x)$, $x_0$,  is given by the relation
$$
2\sqrt{\mu}\beta_0x_0=\frac{1}{\gamma^2}.
$$
We note that $\beta_0$ needs to be strictly positive. Moreover, the profile of $f$ is a strictly convex function from the relation $f''(x)=2\sqrt{\mu}\beta_0$. The idea now is to show that $f(x_0)>0$,  which is equivalent to check that the mapping $M: I(0)\to \mathbb R$ defined by (with $\gamma$ fixed)
\begin{equation}\label{M}
M(\omega)= 4(1-\gamma |\omega|)[1-b\gamma ^3 |\omega|-a\gamma^2]-1
\end{equation}
 is strictly positive for $\omega\in I(0)$ satisfying $|\omega|\leqq (1-\gamma)\min\{1, \frac{|c|}{b}\}$. Before, we show that the unique positive critical point $\omega_0$ of $M$ given by
$$
\omega_0=\frac{1+(b-a)\gamma^2}{2b \gamma^3}\equiv q(\gamma)
$$
does not belong to the admissible interval for the speed velocity $\omega$. In fact, note that  $q$ is a strictly decreasing positive function with $\inf_{\gamma \in (0,1)}q(\gamma)=\frac{1-a+b}{2b}$. Now, for $|c|\leqq b$ we have that $\omega_0>\frac{|c|}{b}$, indeed, since
$$
\frac{1-a+b}{2b}>\frac{|c|}{b} \Leftrightarrow \frac23 +3b>-3c
$$
our claim follows immediately. Next, for $|c|\geqq b$ we obtain $\omega_0>1$. Thus, since $M(0)>0$ and $M$ is a  convex function  we observe that in order to show that $M>0$ on $I(0)$ is sufficient to check that $M(1-\gamma)>0$ for $|c|\geqq b$ and $M((1-\gamma)\frac{|c|}{b})>0$ for $|c|\leqq b$. Thus, we consider the case $|c|\leqq b$, then
since $1-\gamma(1-\gamma)\frac{|c|}{b}\geqq 1-\gamma(1-\gamma)$, $1-\frac13(1-\gamma)\gamma^3>0$ and the polynomial $\tau(\gamma)=4(1-\gamma+\gamma^2)\Big [1-\frac13(1-\gamma)\gamma^3\Big ]-1>0$ for all $\gamma\in (0,1)$, we obtain
$$
M\Big ((1-\gamma)\frac{|c|}{b}\Big)\geqq -4a\gamma^2(1-\gamma+\gamma^2)^2+\tau(\gamma)>0.
$$
\end{enumerate}
This concludes the proof of   the Lemma.
\end{proof}

\begin{proof} $[$Proposition \ref{propert}$]$
Item (a) is an immediate consequence of  the Cauchy-Schwarz inequality. For the coercivity property (\ref{coer}), we deduce from (\ref{CS}) 
\begin{equation}\label{coer2}
E_{+\infty}((\xi, \nu))\geqq \frac12 \langle ((1-\gamma)J_c-|\omega| J_b)\xi, \xi\rangle +\frac12 \langle (\mathcal L_{+\infty}-|\omega| J_b)\nu, \nu\rangle \equiv I+II.
\end{equation}
Next, we estimate the term $I$. Suppose $|c|\geqq b$, then $|\omega|<1-\gamma $. Thus
\begin{equation}
\begin{array}{ll}
2I=\int_{\mathbb R} [(1-\gamma)-|\omega|]\xi^2(x)-c\mu (\xi'(x)) ^2((1-\gamma)+\frac{b}{c}|\omega|)dx\\
\geqq [(1-\gamma)-|\omega|]\int_{\mathbb R} \xi^2-c\mu (\xi') ^2dx=[(1-\gamma)-|\omega|]\|J_c^{1/2}\xi\|^2.
\end{array}
\end{equation}
For the term $II$, we have first from  Cauchy-Schwarz inequality and from Young's inequality that for all $\eta>0$,
$$
\|D|^{1/2}\nu\|^2\leqq \|\nu\|\|\nu'\|\leqq \eta\|\nu\|^2+\frac{1}{4\eta}\|\nu'\|^2.
$$
Then, for $-\alpha\equiv \frac{\mu}{\gamma}(a-\frac{1}{\gamma^2})$ we have
\begin{equation}\label{II}
2II\geqq \Big(\frac{1}{\gamma}-|\omega|-\eta \frac{\sqrt{\mu}}{\gamma^2}\Big)\|\nu\|^2+ (\alpha-\mu b|\omega|- \frac{1}{4\eta}\frac{\sqrt{\mu}}{\gamma^2})\|\nu'\|^2.
\end{equation}
Next, for $\beta_0\equiv -[b|\omega|+ \frac{1}{\gamma}(a-\frac{1}{\gamma^2}]>0$ we choose 
\begin{equation}\label{eta}
\eta=  \frac{\gamma^2}{\sqrt{\mu}}\frac{1}{4\gamma^4\beta_0} + \epsilon_0  \frac{\gamma^2}{\sqrt{\mu}}
\end{equation}
for $\epsilon_0$ chosen such that $f(x_0)>\epsilon_0>0$ (see (\ref{fmi})). Thus
\begin{equation}\label{IIa}
\Big(\frac{1}{\gamma}-|\omega|-\eta \frac{\sqrt{\mu}}{\gamma^2}\Big)\|\nu\|^2= (f(x_0)-\epsilon_0)\|\nu\|^2,
\end{equation}
and 
\begin{equation}\label{IIb}
(\alpha-\mu b|\omega|- \frac{1}{4\eta}\frac{\sqrt{\mu}}{\gamma^2})\|\nu'\|^2= \frac{4\mu \epsilon \gamma^4 \beta_0^2}{1+4\epsilon \gamma^4 \beta_0}\|\nu'\|^2\equiv \theta \|\nu'\|^2 .
\end{equation}
Therefore, for $|c|\geqq b$ we get that
\begin{equation}
E_{+\infty}((\xi, \nu))\geqq \frac12 [(1-\gamma)-|\omega|]\|J_c^{1/2}\xi\|^2+\frac12 min\{f(x_0)-\epsilon_0, \theta\} \|\nu\|_1^2.
\end{equation}
We note that the $\|J_c^{1/2}\xi\|$ is equivalent to $\|\xi\|_1$.

Next, we consider the case $|c|\leqq b$. Thus $|\omega|< (1-\gamma)\frac{|c|}{b}$ and the expression $I$ in (\ref{coer2}) is estimated  as
\begin{equation}\label{II3}
\begin{array}{ll}
2I=\int_{\mathbb R} [(1-\gamma)-|\omega|]\xi^2(x)+\mu b(\xi'(x)) ^2 (\frac{|c|}{b}(1-\gamma)-|\omega|)dx\\
\geqq \Big[\frac{|c|}{b}(1-\gamma)-|\omega| \Big]\int_{\mathbb R} \xi^2+\mu b(\xi') ^2dx=\Big[\frac{|c|}{b}(1-\gamma)-|\omega|\Big]\|J_b^{1/2}\xi\|^2.
\end{array}
\end{equation}
The estimative of the term $II$ in (\ref{coer2}) is the same as in the case $|c|\geqq b$. Therefore, for $|c|\leqq b$ we get that
\begin{equation}
E_{+\infty}((\xi, \nu))\geqq \frac12 \Big[\frac{|c|}{b}(1-\gamma)-|\omega|\Big]\|J_b^{1/2}\xi\|^2+\frac12 min\{f(x_0)-\epsilon_0, \theta\} \|\nu\|_1^2.
\end{equation}
We note that  $\|J_b^{1/2}\xi\|$ is equivalent to $\|\xi\|_1$. This concludes item (b).  Now, item (c) follows immediately from the coercivity property (\ref{coer}) and the inequality
$$
0<\lambda=\int_{\mathbb R} \xi \nu^2dx \leqq |\nu|_\infty\|\nu\|\|\xi\|\leqq \|(\xi, \nu)\|_{1\times 1}^3.
$$
Indeed $I_\lambda\geqq C\lambda^{2/3}>0$. Next, item (d) follows immediately from (\ref{coer}). Finally, from the property $I_{\tau \lambda}=\tau^{2/3} I_\lambda$ for all $\tau>0$ and from $I_\lambda>0$ we obtain the sub-additivity property (\ref{sub}). This concludes the proof of the Proposition.

\end{proof}

In order to show the existence of solutions of equation (\ref{soli}), we will prove that a minimizing sequence for the problem (\ref{I}) converges (modulo translations) to a function in $H^1(\mathbb R)\times H^1(\mathbb R)$ satisfying the constraint $F(\xi, \nu)=\lambda$. To do so we will use the following result, it which is the key tool in our analysis  (see Lemma 1.1 in Lions \cite{Lions}).

\begin{lema}\label{lions} $($The Concentration-Compactness Principle $)$\newline
 Let $\{\rho_n\}_{n\geq 1}$ be a sequence of
non-negative functions in $L^1({\Bbb R})$ satisfying
$\int_{-\infty}^{\infty} \rho_n (x)dx=\sigma$ for all $n$ and
some $\sigma >0$. Then there exists a subsequence
$\{\rho_{n_k}\}_{k\geq 1}$ satisfying one of the following three
conditions:

$(1)$\; {\bf{\text{(Compactness)}}} there are $y_k\in {\Bbb R}$
for $k=1,2,...$, such that $\rho_{n_k}(\cdot +y_k)$ is tight, {\it
{i.e.}} for any $\epsilon >0$, there is $R>0$ large enough such
that
$$
\int_{|x-y_k|\leq R}\rho_{n_k}(x)\,dx \geq \sigma - \epsilon ;
$$

$(2)$\; {\bf{\text{(Vanishing)}}} for any $R>0$,
$$
\lim_{k\to \infty}\; \sup_{y\in \Bbb R}\;\int_{|x-y|\leq
R}\rho_{n_k}(x)dx=0;
$$

$(3)$\; {\bf{\text{(Dichotomy)}}} there exists $\theta_0\in
(0,\sigma )$ such that for any $\epsilon >0$, there exists
$k_0\geq 1$ and $\rho_{k,1},\rho_{k,2}\in L^1({\Bbb R})$, with
$\rho_{k,1},\rho_{k,2}\geq 0$, such that for $k\geq k_0$,
$$
\begin{array}{ll}
&|\rho_{n_k}-(\rho_{k,1}+\rho_{k,2})|_{L^1}\leq \epsilon ,\\
\\
&|\int_{\mathbb R}\rho_{k,1}\,dx-\theta_0 |\leq
\epsilon ,\;\;\;\;
|\int_{\mathbb R}\rho_{k, 2}\,dx-(\sigma - \theta_0)
|\leq \epsilon
,\\
\\
&supp\;\rho_{k,1}\cap supp\;\rho_{k,2}=0\!\!\!/ ,\;\;
dist(supp\;\rho_{k,1},\; supp\;\rho_{k,2})\to \infty \; as\;
k\to \infty .
\end{array}
$$
\end{lema}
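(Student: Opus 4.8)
The plan is to argue via the L\'evy concentration functions. For each $n$ define the nondecreasing function
\[
Q_n(t)=\sup_{y\in\R}\int_{|x-y|\le t}\rho_n(x)\,dx,\qquad t\ge 0,
\]
which takes values in $[0,\sigma]$ since $\int_\R\rho_n\,dx=\sigma$. First I would record that each $Q_n$ is nondecreasing and uniformly bounded by $\sigma$, so by Helly's selection theorem there is a subsequence (still denoted $\rho_{n_k}$) and a nondecreasing limit $Q:[0,\infty)\to[0,\sigma]$ with $Q_{n_k}(t)\to Q(t)$ pointwise. Set
\[
\alpha=\lim_{t\to\infty}Q(t)\in[0,\sigma].
\]
The whole argument then reduces to the trichotomy $\alpha=0$, $\alpha=\sigma$, or $0<\alpha<\sigma$, which will produce the vanishing, compactness, and dichotomy alternatives respectively.

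The two extreme cases are short. If $\alpha=0$, then $Q\equiv 0$ by monotonicity, so for each fixed $R$ we get $\sup_{y\in\R}\int_{|x-y|\le R}\rho_{n_k}\,dx=Q_{n_k}(R)\to Q(R)=0$, which is exactly the vanishing condition. If $\alpha=\sigma$, then given $\epsilon>0$ I would pick $R$ with $Q(R)>\sigma-\epsilon/2$; for $k$ large $Q_{n_k}(R)>\sigma-\epsilon$, and choosing $y_k$ that nearly realizes the defining supremum yields $\int_{|x-y_k|\le R}\rho_{n_k}\,dx\ge\sigma-\epsilon$, i.e.\ tightness of $\rho_{n_k}(\cdot+y_k)$.

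The delicate case, and the main obstacle, is dichotomy, $0<\alpha<\sigma$; here I set $\theta_0=\alpha$. Fix $\epsilon>0$, choose $R$ with $Q(R)>\alpha-\epsilon$, and for $k$ large pick $y_k$ with $\int_{|x-y_k|\le R}\rho_{n_k}\,dx>\alpha-\epsilon$. The heart of the matter is to select an outer scale $R_k\to\infty$ for which the mass of $\rho_{n_k}$ in the annulus $R\le|x-y_k|\le R_k$ is negligible. Since $Q(t)\le\alpha$ for every $t$ and $Q_{n_k}(t)\to Q(t)$ pointwise, a diagonal argument provides $R_k\uparrow\infty$ with $Q_{n_k}(R_k)<\alpha+\epsilon$ for $k$ large; combined with the lower bound on the inner ball this forces $\int_{R\le|x-y_k|\le R_k}\rho_{n_k}\,dx<2\epsilon$. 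I would then set $\rho_{k,1}=\rho_{n_k}\mathbf 1_{\{|x-y_k|\le R\}}$ and $\rho_{k,2}=\rho_{n_k}\mathbf 1_{\{|x-y_k|\ge R_k\}}$. By construction $\operatorname{dist}(\operatorname{supp}\rho_{k,1},\operatorname{supp}\rho_{k,2})\ge R_k-R\to\infty$, the annulus bound gives $|\rho_{n_k}-(\rho_{k,1}+\rho_{k,2})|_{L^1}<2\epsilon$, the inner mass lies within $2\epsilon$ of $\theta_0=\alpha$, and since the total mass equals $\sigma$ the outer mass lies within $O(\epsilon)$ of $\sigma-\theta_0$. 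Getting the bookkeeping of these three simultaneous estimates right --- and in particular extracting $R_k$ through the Helly/diagonal mechanism so that the support separation and the $L^1$ error are controlled at once --- is the only genuine subtlety.
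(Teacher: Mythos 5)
The paper does not actually prove this lemma: it is quoted verbatim from Lions (Lemma I.1 of \cite{Lions}) and used as a black box, so there is no internal proof to compare against. Your route --- the L\'evy concentration functions $Q_n$, Helly selection, and the trichotomy on $\alpha=\lim_{t\to\infty}Q(t)$ --- is precisely Lions' original argument. Your vanishing case and your dichotomy case are sound; in particular the diagonal extraction of the outer radii $R_k$ from the pointwise convergence $Q_{n_k}(t)\to Q(t)\le\alpha$ is the right mechanism, and the $2\epsilon$ (rather than $\epsilon$) bookkeeping on the annulus only costs a harmless relabelling $\epsilon\mapsto\epsilon/2$ at the start.

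There is, however, one genuine gap, in the compactness case $\alpha=\sigma$. The conclusion to be reached is: there exists a \emph{single} sequence $(y_k)$ such that \emph{for every} $\epsilon>0$ there is an $R$ with $\int_{|x-y_k|\le R}\rho_{n_k}\,dx\ge\sigma-\epsilon$. As written, you first fix $\epsilon$, choose $R=R(\epsilon)$, and then pick $y_k$ ``nearly realizing the supremum'' $Q_{n_k}(R(\epsilon))$; these centers a priori depend on $\epsilon$, so you have proved the statement with the quantifiers in the order $\forall\epsilon\;\exists (y_k)$ rather than $\exists (y_k)\;\forall\epsilon$, which is strictly weaker and not what alternative (1) asserts. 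The standard repair is the overlapping-balls trick: fix $R_0$ with $Q(R_0)>\sigma/2$ and let $y_k$ be centers with $\int_{|x-y_k|\le R_0}\rho_{n_k}\,dx>\sigma/2$ for $k$ large. Given $\epsilon\in(0,\sigma/2)$, take $R_\epsilon$ and centers $z_k$ with $\int_{|x-z_k|\le R_\epsilon}\rho_{n_k}\,dx>\sigma-\epsilon$; since two disjoint intervals cannot each carry more than half of the total mass $\sigma$, the intervals $[y_k-R_0,y_k+R_0]$ and $[z_k-R_\epsilon,z_k+R_\epsilon]$ must intersect, whence $|z_k-y_k|\le R_0+R_\epsilon$ and therefore $\int_{|x-y_k|\le R_0+2R_\epsilon}\rho_{n_k}\,dx\ge\sigma-\epsilon$; the finitely many remaining indices $k$ are handled by enlarging $R$, since each single $\rho_{n_k}$ is automatically tight. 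With this insertion your proof is complete.
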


\begin{obs}\label{rnon} In Lemma \ref{lions} above, the condition
$\int_{\mathbb R} \rho_n(x)\,dx=\sigma$ can be replaced by
$\int_{\mathbb R}\rho_n(x)\,dx$ $=\lambda_n$ where
$\lambda_n\to \sigma
>0$ as $n\to\infty$. It is enough to replace $\rho_n$ by $\sigma\rho_n/\lambda_n$ and
apply the lemma.
\end{obs}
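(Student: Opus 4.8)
The plan is to reduce the statement directly to Lemma \ref{lions} by the normalization already indicated in the remark: set $\tilde\rho_n = \frac{\sigma}{\lambda_n}\rho_n$. Since each $\rho_n\ge 0$ lies in $L^1(\mathbb R)$ and $\lambda_n\to\sigma>0$ (so $\lambda_n>0$ for $n$ large), the sequence $\{\tilde\rho_n\}$ is well defined and consists of non-negative $L^1$ functions with the exact normalization $\int_{\mathbb R}\tilde\rho_n\,dx = \frac{\sigma}{\lambda_n}\int_{\mathbb R}\rho_n\,dx = \sigma$ for every $n$. Hence Lemma \ref{lions} applies verbatim to $\{\tilde\rho_n\}$ and produces a subsequence $\{\tilde\rho_{n_k}\}$ satisfying exactly one of the three alternatives. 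I would then fix this subsequence $\{n_k\}$ and transfer each alternative back to $\rho_{n_k}=c_k\tilde\rho_{n_k}$, where $c_k\equiv \lambda_{n_k}/\sigma\to 1$ and is in particular bounded.

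The transfer is immediate in the vanishing and dichotomy cases. In the vanishing case, for every $R>0$,
\[
\sup_{y\in\mathbb R}\int_{|x-y|\le R}\rho_{n_k}\,dx = c_k\,\sup_{y\in\mathbb R}\int_{|x-y|\le R}\tilde\rho_{n_k}\,dx \longrightarrow 1\cdot 0 = 0,
\]
so vanishing holds for $\rho_{n_k}$ with the same $R$. In the dichotomy case, given the decomposition $\tilde\rho_{n_k}\approx \tilde\rho_{k,1}+\tilde\rho_{k,2}$ with parameter $\theta_0$, I would set $\rho_{k,i}=c_k\tilde\rho_{k,i}$ for $i=1,2$. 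These remain non-negative, their supports are unchanged (hence disjoint with mutual distance tending to $\infty$), the $L^1$ error scales by the bounded factor $c_k$, and $\int_{\mathbb R}\rho_{k,1}\,dx=c_k\int_{\mathbb R}\tilde\rho_{k,1}\,dx\to \theta_0$ (resp. $\int_{\mathbb R}\rho_{k,2}\,dx\to \sigma-\theta_0$) because $c_k\to 1$. Thus dichotomy holds for $\rho_{n_k}$ with the same $\theta_0$, after enlarging the index $k_0$ so that $|c_k-1|$ is sufficiently small.

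The only point needing mild care is the compactness case, where tightness of $\tilde\rho_{n_k}(\cdot+y_k)$ must be passed to $\rho_{n_k}(\cdot+y_k)$ with the same translations $y_k$. Here
\[
\int_{|x-y_k|\le R}\rho_{n_k}\,dx = c_k\int_{|x-y_k|\le R}\tilde\rho_{n_k}\,dx \ge c_k(\sigma-\epsilon),
\]
and since $c_k\to 1$ one has $c_k(\sigma-\epsilon)\ge \sigma-2\epsilon$ for all large $k$; as $\epsilon>0$ is arbitrary this yields tightness of $\rho_{n_k}(\cdot+y_k)$. The main (and essentially only) obstacle is the bookkeeping that tightness in Lemma \ref{lions} is phrased uniformly in $k$: the estimate above is clean for $k$ large, while the finitely many initial indices are harmless because the conclusion is asymptotic in nature — one may simply discard them, or restrict to a tail on which $\lambda_{n_k}>\sigma-\epsilon$. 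Since the three alternatives for $\{\tilde\rho_{n_k}\}$ are mutually exclusive and exhaustive, and each transfers to the corresponding alternative for $\{\rho_{n_k}\}$, this completes the argument.
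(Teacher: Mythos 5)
Your proposal is correct and follows exactly the paper's intended argument: the remark's one-line proof is precisely the normalization $\tilde\rho_n=\sigma\rho_n/\lambda_n$ followed by an application of Lemma \ref{lions}, and your transfer of each alternative back to $\rho_{n_k}$ via the bounded factors $c_k=\lambda_{n_k}/\sigma\to 1$ (discarding finitely many initial indices in the compactness case) is the routine verification the paper leaves implicit.
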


The next step in our analysis is to rule out the possibilities of vanishing and dichotomy in Lemma \ref{lions}. The following classical Lemma (see Lemma I.1 in \cite{Lions}) is the key tool to rule  out vanishing.

\begin{lema}\label{kp}
Let $\{(\xi_n, \nu_n)\}_{n\geqq 1}$, be a bounded sequence in $H^1(\mathbb R)\times H^1(\mathbb R)$. Assume that for some $R>0$,
$$
Q_n(R)=sup_{y\in \mathbb R} \int_{y-R}^{y+R} |\nu_n(x)|^2dx \to 0
$$
as $n\to \infty$. Then, $\int_{\mathbb R} \xi_n(x) \nu_n^2(x)dx\to 0$ as $n\to \infty$.
\end{lema}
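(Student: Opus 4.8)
The plan is to exploit the vanishing hypothesis on $\nu_n$ through a local Gagliardo--Nirenberg argument, extracting one factor of the small local $L^2$-mass while absorbing the remaining factors into the uniform $H^1$ bound. First I would cover $\mathbb R$ by the disjoint family of intervals $I_j=[2jR,2(j+1)R)$, $j\in\mathbb Z$, each of length $2R$. Since every $I_j$ is contained in a ball of radius $R$ centered at its midpoint, the hypothesis yields the uniform bound $\int_{I_j}|\nu_n|^2\,dx\le Q_n(R)$ for every $j$ and $n$.

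Next, on a single interval $I$ of fixed length $2R$ I would record two elementary estimates. By the Cauchy--Schwarz inequality,
$$
\int_I |\xi_n|\,\nu_n^2\,dx \le \|\xi_n\|_{L^2(I)}\,\|\nu_n\|_{L^4(I)}^2,
$$
while the one-dimensional Sobolev embedding $H^1(I)\hookrightarrow L^\infty(I)$ together with $\|\nu_n\|_{L^4(I)}^2\le \|\nu_n\|_{L^\infty(I)}\|\nu_n\|_{L^2(I)}$ gives the local interpolation inequality
$$
\|\nu_n\|_{L^4(I)}^2 \le C_R\,\|\nu_n\|_{H^1(I)}\,\|\nu_n\|_{L^2(I)},
$$
where $C_R$ depends only on the length $2R$, hence is independent of $j$. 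Combining the two produces
$$
\int_{I_j}|\xi_n|\,\nu_n^2\,dx \le C_R\,\|\xi_n\|_{L^2(I_j)}\,\|\nu_n\|_{H^1(I_j)}\,\|\nu_n\|_{L^2(I_j)}.
$$

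Summing over $j$ is the heart of the argument. Using $\|\nu_n\|_{L^2(I_j)}\le Q_n(R)^{1/2}$ to pull out one small factor uniformly, and then Cauchy--Schwarz on the remaining sum, I would estimate
$$
\int_{\mathbb R}|\xi_n|\,\nu_n^2\,dx \le C_R\,Q_n(R)^{1/2}\sum_j \|\xi_n\|_{L^2(I_j)}\,\|\nu_n\|_{H^1(I_j)},
$$
and then bound the sum by $\big(\sum_j\|\xi_n\|_{L^2(I_j)}^2\big)^{1/2}\big(\sum_j\|\nu_n\|_{H^1(I_j)}^2\big)^{1/2}$. Because the $I_j$ are disjoint and cover $\mathbb R$, these two sums are exactly $\|\xi_n\|^2$ and $\|\nu_n\|_1^2$, both controlled by the uniform bound $M$ on the sequence. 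Hence $\int_{\mathbb R}|\xi_n|\,\nu_n^2\,dx \le C_R M^2 Q_n(R)^{1/2}\to 0$ as $n\to\infty$, and a fortiori $\int_{\mathbb R}\xi_n\nu_n^2\,dx\to 0$.

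The only delicate point I anticipate is the uniformity of $C_R$ across all intervals $I_j$: I must ensure that the embedding $H^1(I)\hookrightarrow L^\infty(I)$ and the interpolation inequality hold with a single constant depending only on the common length $2R$ and not on the position of the interval. This follows from translation invariance of the estimate on an interval of fixed length, so no genuine difficulty arises; the decay is driven entirely by the cubic structure, which lets me spend exactly one of the three factors on the vanishing quantity $Q_n(R)^{1/2}$ while the other two remain controlled by the $H^1$ bound.
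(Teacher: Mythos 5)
Your proof is correct. The paper's own argument is a one-line reduction: H\"older with exponents $(3,\tfrac32)$ gives $\bigl|\int \xi_n\nu_n^2\bigr|\leqq C\|\xi_n\|_1|\nu_n|_{L^3}^2$, and then the classical Lions vanishing lemma is invoked to get $|\nu_n|_{L^3}^2\leqq C_1\delta_n^{1/3}\|\nu_n\|_1^{4/3}$ with $\delta_n=Q_n(R)\to 0$. What you do differently is to inline the proof of that vanishing lemma: you localize the full trilinear form onto the disjoint blocks $I_j$, spend one factor of the cubic expression on the uniformly small local mass $Q_n(R)^{1/2}$, and recombine the other two by Cauchy--Schwarz over $j$, using that the blockwise $\ell^2$-sums of $\|\xi_n\|_{L^2(I_j)}$ and $\|\nu_n\|_{H^1(I_j)}$ reconstruct the global norms. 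The net effect is the same (you get the rate $Q_n(R)^{1/2}$ rather than the paper's $Q_n(R)^{1/3}$, but both tend to zero), and your version is self-contained, whereas the paper's is shorter at the price of quoting the $L^p$-vanishing lemma as a black box. Your worry about the uniformity of the constant in $H^1(I)\hookrightarrow L^\infty(I)$ is correctly dispatched by translation invariance; no gap there.
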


\begin{proof}  From H\"older inequality and from the embedding $H^1(\mathbb R)\hookrightarrow L^p(\mathbb R)$, $p\geqq 2$, we have for $C>0$ that
$$
\Big |\int_{\mathbb R} \xi_n(x) \nu_n^2(x)dx\Big |\leqq C\|\xi_n\|_1|\nu_n|^2_{L^3}\leqq C_1\delta_n^{1/3} \|\xi_n\|_1\|\nu_n\|_1^{4/3}
$$
with $\delta_n\to 0$ as $n\to \infty$. This achieves the proof.
\end{proof}

Let  $\{(\xi_n, \nu_n)\}_{n\geqq 1}$ be a minimizing sequence for problem (\ref{I}) and consider the sequence of non-negative functions in $L^1(\mathbb R)$
$$
\rho_n(x)=|\xi_n(x)|^2+ |\xi'_n(x)|^2 +|\nu_n(x)|^2+ |\nu'_n(x)|^2.
$$
Let $\lambda_n=\int_{\mathbb R} \rho_n(x)dx$. Since $\lambda_n=\|(\xi_n, \nu_n)\|^2_{1\times 1}$,   $\lambda_n$ is bounded and $\lambda_n\geqq \lambda^{2/3}$. Assume that $\lambda_n\to \sigma $ as $n\to \infty$. Then, from Lemma \ref{lions}  there exists a subsequence $\{\rho_{n_k}\}_{k\geqq 1}$ of $\{\rho_{n}\}_{n\geqq 1}$ which satisfies either vanishing or dichotomy. If vanishing occurs, then 
$$
\lim_{k\to \infty} sup_{y\in \mathbb R} \int_{y-R}^{y+R} |\nu_{n_k}(x)|^2dx =0, \qquad \text{for any}\;\; R>0,
$$
hence, by Lemma \ref{kp} we obtain $\lim_{k\to \infty} F(\xi_{n_k}, \nu_{n_k})=0$,  which is a contradiction.

If dichotomy occurs, there exist $ \theta\in (0, \sigma)$ and $L^1(\mathbb R)$-functions $\rho_{k,1}, \rho_{k,2}$ satisfying item (3) of Lemma \ref{lions}. Moreover, we may assume that the supports of  $\rho_{k,1}$ and $\rho_{k,2}$ are disjoint as follows:
\begin{equation}\label{suppor}
\text{supp}\;\rho_{k,1}\subset (y_k-R_0, y_k+R_0),\; \text{supp}\;\rho_{k,2}\subset (-\infty, y_k-2R_k)\cup (y_k+2R_k, +\infty)
\end{equation}
for some fixed $R_0$, a sequence $\{y_k\}_{k\geqq 1}$ and $R_k\to +\infty$, as $k\to \infty$.

Now, denoting ${\bf{h}}_{n_k}=(\xi_{n_k}, \nu_{n_k})$ we obtain the following splitting functions ${\bf{h}}_{k,1}$ and ${\bf{h}}_{k, 2}$ of ${\bf{h}}_{n_k}$. Consider $\varphi, \zeta \in C^\infty(\mathbb R)$ with $0\leqq \varphi, \zeta\leqq 1$ such that $\zeta(x)=1$, for $|x|\leqq 1$ and $\zeta(x)=0$ for $|x|\geqq 2$, $\varphi(x)=1$, for $|x|\geqq 2$ and $\varphi(x)=0$ for $|x|\geqq 1$. Denote,
$$
\zeta_k(x)=\zeta\Big(\frac{x-y_k}{R_1}\Big),\;\;\text{and}\;\; \varphi_k(x)=\varphi \Big(\frac{x-y_k}{R_k}\Big),
$$
for $x\in \mathbb R$ and $R_1>R_0$, so that $\text{supp}\; \zeta_k\subset \{x: |x-y_k|\}\leqq 2R_1$ and $\text{supp}\; \varphi_k\subset \{x: |x-y_k|\}\geqq 2R_k$. Now, we define ${\bf{h}}_{k,1}=\zeta_k {\bf{h}}_{n_k}$ and ${\bf{h}}_{k,2}=\varphi_k {\bf{h}}_{n_k}$. Therefore, from the relations
\begin{equation}\label{J}
\begin{array}{ll}
&\int_{y_k-R_0}^{y_k+R_0} |\rho_{n_k}(x)-\rho_{k,1}(x)|dx\leqq\epsilon,\\
\\
&\int_{|x-y_k|\geqq 2R_k} |\rho_{n_k}(x)-\rho_{k,2}(x)|dx\leqq \epsilon,\\
\\
&\int_{R_0\leqq |x-y_k|\leqq 2 R_k} \rho_{n_k}(x)dx\leqq \epsilon,
\end{array}
\end{equation}
$|\zeta_k'|_{\infty}\leqq \frac{1}{R_1}|\zeta|_{\infty}, |\varphi_k'|_{\infty}\leqq \frac{1}{R_1}|\varphi|_{\infty}$, and by using a classical argument we can see that for $R_1$ large enough
\begin{equation}\label{J0}
N_0=\Big |\|\zeta_k \xi_{n_k}\|^2_1+ |\zeta_k \nu_{n_k}\|^2_1-\int_{\mathbb R} \rho_{k,1}(x)dx\Big|\leqq \epsilon,
\end{equation}
and
\begin{equation}\label{J1}
N_1=\Big |\|\varphi_k \xi_{n_k}\|^2_1+ \| \varphi_k\nu_{n_k}\|^2_1-\int_{\mathbb R} \rho_{k,2}(x)dx\Big|\leqq \epsilon.
\end{equation}

Next, we define ${\bf{w}}_{k}= {\bf{h}}_{n_k}-({\bf{h}}_{k,1}+{\bf{h}}_{k, 2})$. Then, for $\chi_k=1-\zeta_k-\varphi_k$ we obtain
\begin{equation}\label{w}
\|{\bf{w}}_{k}\|_{1\times 1}=O(\epsilon).
\end{equation}
Indeed, since $\text{supp}\; \chi_k\subset \{x: R_1\leqq |x-y_k|\}\leqq 2R_k$ we obtain from (\ref{J}) that
$$
\|\chi_k \xi_k\|^2_1\leqq 2 (|\chi_k|^2_\infty+ |\chi'_k|^2_\infty)\int_{R_1\leqq |x-y_k|\leqq 2 R_k} \rho_{n_k}(x)dx= O(\epsilon).
$$
Similarly, we obtain $\|\chi_k \nu_k\|^2_1=O(\epsilon)$.  Next, since $\int_{\mathbb R} \zeta_k^3 \xi_{n_k}\nu_{n_k}^2dx$ is bounded, there is a subsequence of $\{{\bf{h}}_{k,1}\}$, still denoted by $\{{\bf{h}}_{k,1}\}$, such that for $k\geqq k_0>0$ and $\theta\in \mathbb R$
\begin{equation}\label{thet}
\Big |\int_{\mathbb R} \zeta_k^3 \xi_{n_k}\nu_{n_k}^2dx-\theta \Big | \leqq \epsilon.
\end{equation}
Moreover, since $F(\xi_{n_k}, \nu_{n_k})=\lambda$, $\zeta_k\varphi_k=0$, $\chi_k^3=1-\zeta_k^3-\varphi_k^3$ we obtain for $k\geqq k_0$,
\begin{equation}\label{thet2}
\Big |\int_{\mathbb R} \varphi_k^3 \xi_{n_k}\nu_{n_k}^2dx-(\lambda-\theta) \Big | \leqq \Big |\int_{\mathbb R} \chi_k^3 \xi_{n_k}\nu_{n_k}^2dx\Big |+\epsilon\leqq \|{\bf{w}}_{k}\|^3_{1\times 1} + \epsilon=O(\epsilon).
\end{equation}

Next, we will show that for $R_1, R_k$, large enough,  we have 
\begin{equation}\label{key1}
E_{+\infty}({\bf{h}}_{n_k})=E_{+\infty}({\bf{h}}_{k,1}) + E_{+\infty}({\bf{h}}_{k,2}) + O(\epsilon).
\end{equation}
Indeed, denoting  $(a_k, b_k)\equiv {\bf{w}}_{k}=(\chi_k \xi_{n_k}, \chi_k\nu_k)$  one easily checks  that 
\begin{equation}\label{key2}
E_{+\infty}({\bf{h}}_{n_k})=E_{+\infty}({\bf{w}}_{k})+E_{+\infty}({\bf{h}}_{k,1}) + E_{+\infty}({\bf{h}}_{k,2}) + N_3+N_4,
\end{equation}
where for positive constants $c_1, c_2, c_3, c_5, c_6$ and $c_4<0$ 
\begin{equation}\label{N_3}
\begin{array}{ll}
N_3=&\int_{\mathbb R} (\zeta_k+\varphi_k)[(2 c_1a_k-\omega b_k) \xi_{n_k} + (c_2b_k-\omega a_k) \nu_{n_k}]\\
\\
&+ ((\zeta_k \xi_{n_k})' + (\varphi_k \xi_{n_k})' )
(2 c_3a'_k-\omega c_4 b'_k)+((\zeta_k \nu_{n_k})' +(\varphi_k \nu_{n_k})') (2 c_5b'_k-\omega c_4 a'_k)
\end{array}
\end{equation}
and
\begin{equation}\label{N_4}
N_4=-2c_6\int_{\mathbb R}  [\zeta_k \nu_{n_k} +\varphi_k \nu_{n_k}]D(b_k) + \varphi_k \nu_{n_k}D(\zeta_k \nu_{n_k})dx.
\end{equation}
Moreover  $N_3=O(\epsilon)$ and  $N_4=O(\epsilon)$.  Indeed, from Cauchy-Schwarz inequality, Proposition \ref{propert} (item (d)) and (\ref{w}) we obtain for $R_k$ large enough  that
$$
|N_3|\leqq C\| {\bf{w}}_{k}\|_{1\times 1}\|{\bf{h}}_{n_k}\|_{1\times 1}=O(\epsilon).
$$
Now, concerning $N_4$ we observe first from Cauchy-Schwarz inequality, Proposition \ref{propert} and (\ref{w}),   that
\begin{equation}\label{1N_4}
\int_{\mathbb R}  [\zeta_k \nu_{n_k} +\varphi_k \nu_{n_k}]D(b_k) dx\leqq 2\|D(b_k)\|\|\nu_{n_k} \|\leqq C \| {\bf{w}}_{k}\|_{1\times 1}\|{\bf{h}}_{n_k}\|_{1\times 1}=O(\epsilon).
\end{equation}
Next, since $D=\mathcal H \partial_x$, where $\mathcal H$ is the Hilbert-transform, we have by using commutators, $\zeta_k \varphi_k=0$ and Calderon commutator Theorem (see \cite{C}-\cite{CM}) that for $R_1$ large
\begin{equation}\label{2N_4}
\begin{array}{ll}
\Big |\int_{\mathbb R}  \varphi_k \nu_{n_k}D(\zeta_k \nu_{n_k})dx\Big |&=\Big |\int_{\mathbb R} \varphi_k \nu_{n_k}(\mathcal H(\zeta'_k \nu_{n_k}) + [\mathcal H,\zeta_k]\nu'_{n_k} + \zeta_k \mathcal H \nu'_{n_k})dx \Big|\\
&\leqq C\|\nu_{n_k}\|( [\mathcal H,\zeta_k]\nu'_{n_k} \| +\|\zeta'_k \nu_{n_k}\|)\\
&\leqq C\|\nu_{n_k}\|^2|\zeta'_k|_{\infty}=C\frac{|\zeta'|_{\infty}}{R_1}\|\nu_{n_k}\|^2=O(\epsilon).
\end{array}
\end{equation}
Thus, from (\ref{1N_4})-(\ref{2N_4}) we obtain $N_4=O(\epsilon)$.

Now, from  Proposition \ref{propert}, item (a), and (\ref{w}) it follows that for $R_1, R_k$ large enough, $
E_{+\infty}({\bf{w}}_{k})\leqq C\|{\bf{w}}_{k}\|_{1\times 1}=O(\epsilon)$, and so we obtain the key relation (\ref{key1}). Moreover,
\begin{equation}\label{3key}
I_\lambda\geqq \liminf_{k\to \infty} E_{+\infty}({\bf{h}}_{n_k})\geqq \liminf_{k\to \infty} E_{+\infty}({\bf{h}}_{k,1}) + \liminf_{k\to \infty} E_{+\infty}({\bf{h}}_{k,2}) + O(\epsilon).
\end{equation}

The next step in our analysis is to rule out dichotomy. Indeed,   recalling that
$\theta=\lim_{k\to\infty} \int_{\mathbb R} \zeta_k^3 \xi_{n_k}\nu_{n_k}^2dx$, we consider the following cases:

\vspace{0.3cm}
\begin{enumerate}
\item[1)] Suppose $\theta=0$. Then, from (\ref{thet2}) we have for $k$ large that $F({\bf{h}}_{k,2})>\lambda/2>0$. Therefore, for $k$ fixed we consider $d_k>0$ such that $F(d_k{\bf{h}}_{k,2})=\lambda$. Moreover,
$$
|d_k-1|\leqq \Big(\frac{2}{\lambda}\Big)^{1/3}|\lambda^{1/3}-F({\bf{h}}_{k,2})^{1/3}|\leqq C_1 \epsilon
$$
with $C_1$ independent of ${\bf{h}}_{k,2}$ and  $\epsilon$. Hence, $d_k\to 1$, as $k\to \infty$, and 
\begin{equation}\label{5key}
I_\lambda\leqq E_{+\infty}(d_k {\bf{h}}_{k,2})=d^2_k E_{+\infty}({\bf{h}}_{k,2})= E_{+\infty}({\bf{h}}_{k,2})+ O(\epsilon).
\end{equation}
Thus, from the coercivity property of $E_{+\infty}$ (see item (b)-Proposition \ref{propert}) and from (\ref{J0}) and item (3) in Lemma \ref{lions} we obtain for $\theta_0>0$,
$$
\liminf_{k\to \infty} E_{+\infty}({\bf{h}}_{k,1})\geqq C \liminf_{k\to \infty} \|{\bf{h}}_{k,1}\|^2_{1\times 1}\geqq C\liminf_{k\to \infty} |\rho_{k,1}|_{L^1} + O(\epsilon)\geqq C\theta_0 + O(\epsilon)
$$
and therefore from (\ref{3key})-(\ref{5key})
$$
I_\lambda\geqq C\theta_0 + I_\lambda +O(\epsilon).
$$
Thus, letting $\epsilon\to 0$ in the last relation leads to the contradiction $
I_\lambda\geqq C\theta_0 + I_\lambda$.

\item[2)] Suppose $\lambda>\theta>0$.  Using the previous argument combined with item (3) in Lemma \ref{lions} and (\ref{J1}), we can prove that $I_\lambda\geqq I_\theta + I_{\lambda-\theta} + O(\epsilon)$ and for $\epsilon\to 0$ we obtain 
$$
I_\lambda\geqq I_\theta + I_{\lambda-\theta}.
$$
But, from (\ref{sub}) we obtain a contradiction.
\item[3)] Case $\theta<0$.  From (\ref{thet2}) follows that $\lim_{k\to \infty} F({\bf{h}}_{k,2})=\lambda-\theta>\lambda/2$. Thus, by a  similar analysis as in item 1) we get a contradiction.

\item[4)] Case $\theta=\lambda$. Then $F({\bf{h}}_{k,1})>\lambda/2$ for $k$ large, and so by using (\ref{J1}) and again,  an  analysis as in item 1) leads to a contradiction.

\item[5)] Case  $\theta>\lambda$. Let consider $e_k>0$ such that $F(e_k{\bf{h}}_{k,1})=\lambda$. Hence, $e_k\to (\lambda/\theta)^{1/3}$, as $k\to \infty$. Then, for $k$ large and from the positivity of $E_{+\infty}$
\begin{equation}\label{6key}
I_\lambda\leqq E_{+\infty}(e_k {\bf{h}}_{k,1})=e^2_k E_{+\infty}({\bf{h}}_{k,1}) <E_{+\infty}({\bf{h}}_{k,1}).
\end{equation}
Thus, from the coercivity property of $E_{+\infty}$, from (\ref{J1}) and item (3) in Lemma \ref{lions} we obtain for $\theta_0>0$
$$
\liminf_{k\to \infty} E_{+\infty}({\bf{h}}_{k,2})\geqq C \liminf_{k\to \infty} \|{\bf{h}}_{k,2}\|^2_{1\times 1}\geqq C\liminf_{k\to \infty} |\rho_{k,2}|_{L^1} + O(\epsilon)\geqq C\theta_0 + O(\epsilon)
$$
and therefore from (\ref{3key})-(\ref{6key})
$$
I_\lambda\geqq C\theta_0 + I_\lambda +O(\epsilon).
$$
Thus, letting $\epsilon\to 0$ in the last relation leads to the contradiction $
I_\lambda\geqq C\theta_0 + I_\lambda$.
\end{enumerate}

Since vanishing and dichotomy have been ruled out, if follows from Lemma \ref{lions} that there exists a sequence $\{y_k\}_{k\geqq 1}\subset {\Bbb R}$  such that for any $\epsilon >0$, there are $R>0$, large enough, and $k_0>0$ such that for $k\geqq k_0$
$$
\int_{|x-y_k|\leq R}\rho_{n_k}(x)\,dx \geq \sigma - \epsilon, \quad \int_{|x-y_k|\geqq R}\rho_{n_k}(x)\,dx =O(\epsilon),
$$
and  for $P_k=(-\infty, y_k-R]\cup [y_k+R, \infty)$
$$
\Big| \int_{P_k} \zeta_{n_k} \nu^2_{n_k} dx \Big |\leqq |\zeta_{n_k}|_{L^3} \Big (\int_{P_k} |\nu_{n_k}|^3 dx\Big )^{2/3}\leqq C\|\zeta_{n_k}\|_1|\nu_{n_k}|^{2/3}_{\infty}\Big(\int_{P_k} \rho_{n_k}(x)\;dx\Big )^{2/3}=O(\epsilon),
$$
where we have used that every minimizing sequence is bounded in $H^1(\mathbb R)\times H^1(\mathbb R)$. 

Then, it follows that
\begin{equation}\label{2const}
\Big |\int_{|x-y_k|\leq R} \zeta_{n_k}(x) \nu^2_{n_k} (x)\,dx-\lambda\Big |\leqq   \epsilon
\end{equation}

Let $\tilde{{\bf{h}}}_{n_k}(x)=(\tilde{\zeta}_{n_k}(x),\tilde{\nu}_{n_k}(x))=(\zeta_{n_k}(x-y_k), \nu_{n_k}(x-y_k))$. Then $\{\tilde{{\bf{h}}}_{n_k}\}_{k\geqq 1}$ is bounded in $H^1(\mathbb R)\times H^1(\mathbb R)$ and therefore $\{\tilde{{\bf{h}}}_{n_k}\}_{k\geqq 1}$  (up to  a subsequence) converges weakly in $H^1(\mathbb R)\times H^1(\mathbb R)$ to a vector-valued function $\tilde{{\bf{h}}}=(\zeta_0, \nu_0)$. Then it follows from (\ref{2const}) that for $k\geqq k_0$,
\begin{equation}\label{3const}
\lambda\geqq \int_{-R}^{R} \tilde\zeta_{n_k}(x) \tilde\nu^2_{n_k}(x)\,dx\geqq \lambda-\epsilon,
\end{equation}
and so by the compact embedding of $H^1(-R, R)$ in $L^2(-R, R)$ we  have the relation $\lambda\geqq \int_{-R}^{R} \zeta_0(x) \nu^2_{0}(x)\,dx\geqq \lambda-\epsilon$. Thus for $\epsilon=\frac1j$, $j\in \mathbb N$, there exists $R_j>j$ such that
\begin{equation}\label{4const}
\lambda\geqq \int_{-R_j}^{R_j}  \zeta_0(x) \nu^2_{0}(x)\,dx\geqq \lambda-\frac1j,
\end{equation}
and consequently as $j\to\infty$, we finally have that $F(\zeta_0, \nu_{0})=\lambda$. Furthermore, from the weak lower semicontinuity of $E_{+\infty}$ and the invariance of $E_{+\infty}$ by translations, we have
$$
I_\lambda=\liminf_{k\to \infty} E_{+\infty}(\tilde{\zeta}_{n_k},\tilde{\nu}_{n_k})\geqq E_{+\infty}(\zeta_0, \nu_{0})\geqq I_\lambda.
$$
Thus, the vector-valued function $\tilde{{\bf{h}}}=(\zeta_0, \nu_0)$ solves the variational problem (\ref{I}) and therefore there exists $K>0$ (Lagrange multiplier) such that
\begin{equation}\label{lagran}
 \left\{\begin{array}{lll}
(1-\gamma) J_c\xi_0-\omega J_b\nu_0=K r \nu^2_0,\\
\mathcal L_{+\infty}\nu_0 -\omega J_b \xi_0 =2Kr\xi \nu\\
\end{array}\right.
\end{equation}
with $r=\frac{\varepsilon}{2\gamma}$. We note that $K>0$ since $3K\lambda=2E_{+\infty}(\zeta_0, \nu_{0})=I_\lambda>0$. Thus, if we consider $(\zeta, \nu)=K(\zeta_0, \nu_{0})$ then from (\ref{lagran}) we have that $(\zeta, \nu)$ solves problem (\ref{soli}). This concludes part (a) of Theorem \ref{main}.

\section{Existence of solitary wave solutions when $\mu_2$ is  finite}

In  this section we give the proof of the existence of solitary waves solutions for system (\ref{0soli}) when  $\mu_2$ is finite and satisfies appropriate conditions. We will find a non-trivial solution $(\xi, \nu)$ by solving the minimization problem (\ref{I}).

We start the analysis by establishing the coercivity property  of $E_{\mu_2}$. For  $ a, b, c$ satisfying the relations in (\ref{abc}) with $b=d$ we obtain:

\begin{prop}\label{propert2}
Let $\gamma \in (0,1)$ fixed and $|\omega|<(1-\gamma)min\{1, \frac{|c|}{b}\}$. Then, for $(\xi, \nu)\in H^1(\mathbb R)\times H^1(\mathbb R)$, we have
\begin{enumerate}
\item[(a)] $E_{\mu_2}(\xi, \nu)$ is well defined and satisfies
$$
E_{\mu_2}(\xi, \nu)\leqq c_0(\|J_c^{1/2} \xi\|^2 +\|\nu\|_1^2)+c_1(\|J_b^{1/2} \xi\|^2 +\|J_b^{1/2}\nu\|)^2). 
$$
\item[(b)] $\langle ((1-\gamma) J_c-|\omega| J_b)\xi, \xi\rangle \geqq 0$.
\item[(c)] Let $\alpha_0>0$ defined in (b)-Theorem \ref{main}. Then for every fixed $\gamma$, we can choose $\mu_2$ satisfying $\frac{\sqrt{\mu}}{\sqrt{\mu_2}}<\gamma^2 \alpha_0$ and such that
\begin{equation}\label{coer3}
\langle (\mathcal L_{\mu_2}-|\omega| J_b)\nu, \nu\rangle \geqq 0.
\end{equation}
\item[(d)]  For $C=C(b, c, \mu, \mu_2, |\omega|, \gamma)>0$, we have
\begin{equation}\label{coer4}
E_{\mu_2}(\xi, \nu)\geqq C \|(\xi, \nu)\|_{1\times 1}^2.
\end{equation}
\item[(e)]  $I_\lambda\in (0, +\infty)$.
\item[(f)] All minimizing sequences for $I_\lambda$ are bounded in $H^1(\mathbb R)\times H^1(\mathbb R)$.
\item[(g)]  For all $\theta\in (0,\lambda)$, we have the sub-additivity property of $I_\lambda$,
\begin{equation}\label{sub2}
I_\lambda<I_\theta+I_{\lambda-\theta}.
\end{equation}

\end{enumerate}
\end{prop}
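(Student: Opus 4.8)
The plan is to follow the proof of Proposition \ref{propert} almost verbatim, the only genuinely new ingredient being the analysis of the quadratic form attached to $\mathcal L_{\mu_2}$, i.e. item (c) and the $\nu$-part of (d). Item (a) follows from the Cauchy--Schwarz inequality once one records that $\mathcal L_{\mu_2}$ maps $H^1(\mathbb R)$ into $H^{-1}(\mathbb R)$ boundedly: its symbol in \eqref{L2} is $O(1+x^2)$ because $|x|\coth(\sqrt{\mu_2}|x|)\leqq |x|+\tfrac{1}{\sqrt{\mu_2}}$ and $x^2\coth^2(\sqrt{\mu_2}|x|)\leqq x^2+\tfrac{1}{\mu_2}$, so $\langle \nu,\mathcal L_{\mu_2}\nu\rangle\lesssim \|\nu\|_1^2$. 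Item (b) is literally identical to step (1) in the proof of Lemma \ref{positi}, since the $\xi$-form $\langle((1-\gamma)J_c-|\omega|J_b)\xi,\xi\rangle$ does not involve $\mu_2$; the admissibility of $\omega$ and \eqref{abc} make the associated symbol nonnegative.

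The heart of the matter is item (c), which I expect to be the main obstacle. Passing to the Fourier side and using Plancherel, I would write $\langle(\mathcal L_{\mu_2}-|\omega|J_b)\nu,\nu\rangle=\int_{\mathbb R} g(x)|\widehat\nu(x)|^2\,dx$, where $g$ is the even symbol of $\mathcal L_{\mu_2}-|\omega|J_b$, and compare it with the $\mu_2=+\infty$ symbol $f$ of \eqref{f}. A direct computation gives
\[
g(x)-f(x)=-\frac{\sqrt\mu}{\gamma^2}|x|\big(\coth(\sqrt{\mu_2}|x|)-1\big)+\frac{\mu}{\gamma^3}x^2\big(\coth^2(\sqrt{\mu_2}|x|)-1\big).
\]
Since $\coth\geqq 1$, the second term is nonnegative and may simply be dropped (this favorable sign originates from the symbol of $\partial_x^2$ being $-x^2$, and is the point one must check with care), while the elementary bound $s(\coth s-1)=\tfrac{2s}{e^{2s}-1}\leqq 1$ yields $|x|(\coth(\sqrt{\mu_2}|x|)-1)\leqq \tfrac{1}{\sqrt{\mu_2}}$. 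Combining these with the lower bound $f(x)\geqq f(x_0)=\alpha_0$ established in Lemma \ref{positi} (see \eqref{fmi}) gives the pointwise estimate $g(x)\geqq \alpha_0-\tfrac{\sqrt\mu}{\gamma^2\sqrt{\mu_2}}$, which is strictly positive exactly under the threshold $\tfrac{\sqrt\mu}{\sqrt{\mu_2}}<\gamma^2\alpha_0$. This is precisely the step that forces the admissible range of $\mu_2$.

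For the coercivity (d) I would split $E_{\mu_2}\geqq \tfrac12\langle((1-\gamma)J_c-|\omega|J_b)\xi,\xi\rangle+\tfrac12\langle(\mathcal L_{\mu_2}-|\omega|J_b)\nu,\nu\rangle$. The $\xi$-form is handled by the exact computation of Proposition \ref{propert}(b), splitting into the cases $|c|\geqq b$ and $|c|\leqq b$ and producing a multiple of $\|J_c^{1/2}\xi\|^2$, resp.\ $\|J_b^{1/2}\xi\|^2$, both equivalent to $\|\xi\|_1^2$. For the $\nu$-form I would use the pointwise bound $g\geqq f-\tfrac{\sqrt\mu}{\gamma^2\sqrt{\mu_2}}$ just obtained to reduce to the $\mu_2=+\infty$ case, namely $\langle(\mathcal L_{\mu_2}-|\omega|J_b)\nu,\nu\rangle\geqq \langle(\mathcal L_{+\infty}-|\omega|J_b)\nu,\nu\rangle-\tfrac{\sqrt\mu}{\gamma^2\sqrt{\mu_2}}\|\nu\|^2$, and then invoke the Young-inequality estimate \eqref{II}--\eqref{IIb}, which bounds the first term on the right from below by $(\alpha_0-\epsilon_0)\|\nu\|^2+\theta\|\nu'\|^2$. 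Since the threshold guarantees $\alpha_0-\tfrac{\sqrt\mu}{\gamma^2\sqrt{\mu_2}}>0$, choosing $\epsilon_0$ small keeps the coefficient of $\|\nu\|^2$ positive, giving coercivity of the $\nu$-form and hence \eqref{coer4}.

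Finally, items (e), (f), (g) are obtained verbatim as in Proposition \ref{propert}(c)--(e): (e) follows from \eqref{coer4} together with $0<\lambda=\int_{\mathbb R}\xi\nu^2\,dx\leqq \|(\xi,\nu)\|_{1\times1}^3$, giving $I_\lambda\geqq C\lambda^{2/3}>0$; (f) is immediate from \eqref{coer4}; and (g) follows from the homogeneity $I_{\tau\lambda}=\tau^{2/3}I_\lambda$ for $\tau>0$ together with $I_\lambda>0$.
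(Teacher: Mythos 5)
Your proposal is correct and follows essentially the same route as the paper: items (a), (b), (e), (f), (g) are carried over verbatim from Proposition \ref{propert}, and the new content — the positivity and coercivity of the $\nu$-form — is obtained exactly as in the paper, by comparing the symbol of $\mathcal L_{\mu_2}-|\omega|J_b$ with the $\mu_2=+\infty$ symbol via $|x|(\coth(\sqrt{\mu_2}|x|)-1)\leqq \frac{1}{\sqrt{\mu_2}}$ and the favorable sign of the $\coth^2-1$ term, then absorbing the resulting $\frac{\sqrt{\mu}}{\gamma^2\sqrt{\mu_2}}\|\nu\|^2$ loss using the threshold $\frac{\sqrt{\mu}}{\sqrt{\mu_2}}<\gamma^2\alpha_0$ and the Young-inequality computation \eqref{II}--\eqref{IIb}.
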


\begin{proof} The proof of items (a)-(b)-(e)-(f)-(g) are similar to those given in Proposition \ref{propert}. For inequality (\ref{coer3}), we consider the symbol associated to the linear operator $\mathcal L_{\mu_2}-|\omega| J_b$ given by the following even function
$$
f_{\mu_2}(x)= \frac{1}{\gamma}-\frac{\sqrt{\mu}}{\gamma^2} |x|\coth(\sqrt{\mu_2} |x|)- \frac{\mu}{\gamma}\Big(a- \frac{1}{\gamma^2}\coth^2(\sqrt{\mu_2} |x|)\Big ) x^2- |\omega|-\mu b|\omega| x^2. 
$$
Next, since for $x>0$ we observe that
\begin{equation}\label{coth}
x\;\text{coth}(\sqrt{\mu_2}x )\leqq \frac{1}{\sqrt{\mu_2}} +x, 
\end{equation}
and that for all $x$, $\text{coth}^2(\sqrt{\mu_2}|x| )-1\geqq 0$.

Then  it follows that
\begin{equation}\label{coth2}
f_{\mu_2}(x)\geqq f_{+\infty}(x) - \frac{\sqrt{\mu}}{\gamma^2}\frac{1}{\sqrt{\mu_2}}\geqq f_{+\infty}(x_0) - \frac{\sqrt{\mu}}{\gamma^2}\frac{1}{\sqrt{\mu_2}}>0,
\end{equation}
where the positivity property is deduced from (\ref{fmi}) and from the condition on $\mu_2$.

For the coercivity property (\ref{coer4}), we only need to show that
\begin{equation}\label{coth3}
\langle (\mathcal L_{\mu_2}-|\omega| J_b)\nu,\nu\rangle\geqq C\|\nu\|^2_1.
\end{equation}
Indeed,
\begin{equation}\label{coth4}
\begin{array}{lll}
\langle (\mathcal L_{\mu_2}-|\omega| J_b)\nu,\nu\rangle&=\int_{\mathbb R}(\Big(\frac1\gamma-|\omega|\Big)\nu^2(x)- \frac{\sqrt{\mu}}{\gamma^2}\nu (x) |D|\coth(\sqrt{\mu_2} |D|)\nu(x) \\
\\
&+ \frac{\mu}{\gamma}\nu(x)\Big(a- \frac{1}{\gamma^2}\coth^2(\sqrt{\mu_2} |D|)\Big ) \partial_x^2 \nu (x)+ \mu b|\omega| \nu(x) \partial_x^2\nu(x) dx. 
\end{array}
\end{equation}
Next, we estimative the integral terms in (\ref{coth4}). From (\ref{coth}) and Plancherel Theorem we obtain
\begin{equation}\label{coth5}
-\int_{\mathbb R} \frac{\sqrt{\mu}}{\gamma^2}\nu (x) |D|\coth(\sqrt{\mu_2} |D|)\nu(x)\geqq - \frac{\sqrt{\mu}}{\gamma^2} \frac{1}{\sqrt{\mu_2}}\|\nu\|^2- \frac{\sqrt{\mu}}{\gamma^2}\||D|^{1/2} \nu\|^2.
\end{equation}
Now,
\begin{equation}\label{coth6}
\begin{array}{lll}
\int_{\mathbb R} \nu(x)\Big(a- &\frac{1}{\gamma^2}\coth^2(\sqrt{\mu_2} |D|)\Big ) \partial_x^2 \nu (x)dx\\
&=-\Big(a- \frac{1}{\gamma^2}\Big )\|\nu'\|^2 +\frac{1}{\gamma^2} \int_{\mathbb R} \nu(x)[1- \coth^2(\sqrt{\mu_2} |D|)]\partial_x^2\nu (x)dx\\
&\geqq -\Big(a- \frac{1}{\gamma^2}\Big )\|\nu'\|^2, 
\end{array}
\end{equation}
where we have used that $\int_{\mathbb R} \nu(x)[1- \coth^2(\sqrt{\mu_2} |D|)]\partial_x^2\nu (x)dx\geqq 0$. 

Thus, by following a similar analysis as in the proof of (\ref{coer}) we obtain for $\eta$ defined in (\ref{eta}) that
\begin{equation}\label{coth7}
\begin{array}{lll}
\langle (\mathcal L_{\mu_2}-|\omega| J_b)\nu,\nu\rangle&= \Big(\frac{1}{\gamma}-|\omega|
-\frac{\sqrt{\mu}}{\gamma^2}\frac{1}{\sqrt{\mu_2}}-\eta \frac{\sqrt{\mu}}{\gamma^2}\Big)\|\nu\|^2
+(\alpha-\mu b|\omega|- \frac{1}{4\eta}\frac{\sqrt{\mu}}{\gamma^2})\|\nu'\|^2\\
& = r\|\nu\|^2 + \theta \|\nu'\|^2,
\end{array}
\end{equation}
where $\alpha\equiv -\frac{\mu}{\gamma}(a-\frac{1}{\gamma^2})$, $\theta>0$ is defined in (\ref{IIb}) and $\epsilon_0$ is chosen such that
$$
r=f(x_0)-\frac{\sqrt{\mu}}{\gamma^2}\frac{1}{\sqrt{\mu_2}}-\epsilon_0>0.
$$

Thus, from the analysis above, we obtain the coercivity property of  $E_{\mu_2}$, 
\begin{equation}\label{coer5}
2E_{\mu_2}(\xi, \nu)\geqq min \Big \{(1-\gamma)-|\omega|, (1-\gamma)\frac{|c|}{b}-|\omega|\Big \}\|J^{1/2}_b\xi\|^2 +min\{r, \theta\} \|\nu\|_1^2.
\end{equation}
This ends   the proof of the Proposition.
\end{proof}

The proof of existence of non-trivial solutions of equation (\ref{0soli}) follows the same strategy as in the case $\mu_2=+\infty$. We use the Concentration-Compactness's Lemma \ref{lions} with the $H^1(\mathbb R)\times H^1(\mathbb R)-\rho_{n}$  type sequence defined in Section 2 to show the existence of a minimum of (\ref{I}). The analysis for  ruling-out vanishing and dichotomy follows {\it mutatis mutandis} the case of an infinite depth $\mu_2$ and  the more delicate part to be   obtained is  the  key relation (\ref{key1}) for the functional $E_{\mu_2}$. 

Thus, by using the same notations as in Section 2, we have that for  $(a_k, b_k)\equiv {\bf{w}}_{k}=(\chi_k \xi_{n_k}, \chi_k\nu_{n_k})$
\begin{equation}\label{key3}
E_{\mu_2}({\bf{h}}_{n_k})=E_{\mu_2}({\bf{w}}_{k})+E_{\mu_2}({\bf{h}}_{k,1}) + E_{\mu_2}({\bf{h}}_{k,2}) + N_5+N_6 +N_7,
\end{equation}
where for positive constants $c_1, c_2, c_3, c_6$, and $c_4<0$,
\begin{equation}\label{N_5}
\begin{array}{ll}
N_5=&\int_{\mathbb R} (\zeta_k+\varphi_k)[(2 c_1a_k-\omega b_k) \xi_{n_k} + (c_2b_k-\omega a_k) \nu_{n_k}]\\
\\
&+ ((\zeta_k \xi_{n_k})' + (\varphi_k \xi_{n_k})' )
(2 c_3a'_k-\omega c_4 b'_k)-\omega c_4 a'_k ((\zeta_k \nu_{n_k})' +(\varphi_k \nu_{n_k})'),
\end{array}
\end{equation}
\begin{equation}\label{N_6}
N_6=-2c_6\int_{\mathbb R}  b_k |D|\coth(\sqrt{\mu_2}|D|)(\zeta_k \nu_{n_k} +\varphi_k \nu_{n_k}) + \zeta_k \nu_{n_k} |D|\coth(\sqrt{\mu_2}|D|)(\varphi_k \nu_{n_k})dx,
\end{equation}
and for $J(|D|)= a-\frac{1}{\gamma^2} \coth^2 (\sqrt{\mu_2} |D|)$ and $c_7>0$ we have
\begin{equation}
N_7=c_7\int_{\mathbb R}  b_k J(|D|)\partial_x^2 (\zeta_k \nu_{n_k} +\varphi_k \nu_{n_k}) + \zeta_k \nu_{n_k}  J(|D|)\partial_x^2(\varphi_k \nu_{n_k})dx
\end{equation}
and such that $N_5=O(\epsilon)$, $N_5=O(\epsilon)$ and $N_7=O(\epsilon)$. 

 Indeed, from Cauchy-Schwarz inequality, Proposition \ref{propert} (item (d)) and (\ref{w}) we obtain for $R_k$ large that
$$
|N_5|\leqq C\| {\bf{w}}_{k}\|_{1\times 1}\|{\bf{h}}_{n_k}\|_{1\times 1}=O(\epsilon).
$$
Now, with regard to $N_6$ we observe first from relation (\ref{coth}), Cauchy-Schwarz inequality, Proposition \ref{propert} and (\ref{w}) that the first  integral term in (\ref{N_6}) can be estimated as
\begin{equation}\label{1N_6}
\begin{array}{ll}
\int_{\mathbb R}  b_k &|D|\coth(\sqrt{\mu_2}|D|)(\zeta_k \nu_{n_k} +\varphi_k \nu_{n_k}) dx=\int_{\mathbb R}  b_k (|D|\coth(\sqrt{\mu_2}|D|)-|D|)[(\zeta_k +\varphi_k) \nu_{n_k}] \\
\\
&+ \int_{\mathbb R} \nu_{n_k}(\zeta_k +\varphi_k) |D|b_k dx\\
\\
&\leqq \frac{2}{\sqrt{\mu_2}}\int_{\mathbb R} |b_k| |\nu_{n_k}| dx+ 2 \int_{\mathbb R} ||D|b_k| |\nu_{n_k}|dx\leqq C\|{\bf{h}}_{n_k}\|_{1\times 1}\|{\bf{w}}_{k}\|_{1\times 1}=O(\epsilon).
\end{array}
\end{equation}
Next, we define the bounded linear operator $\mathcal T: L^2(\mathbb R)\to L^2(\mathbb R)$  by
\begin{equation}
\mathcal T= \coth(\sqrt{\mu_2}|D|)-\frac{1}{\sqrt{\mu_2}|D|},
\end{equation}
and so since $\zeta_k\varphi_k\equiv 0$ we have that the second integral term in (\ref{N_6}) can be writen as
\begin{equation}\label{2N_6}
\begin{array}{ll}
\int_{\mathbb R} \zeta_k \nu_{n_k} |D|coth(\sqrt{\mu_2}|D|)(\varphi_k \nu_{n_k})dx&=\int_{\mathbb R} \zeta_k \nu_{n_k} \Big[|D|\coth(\sqrt{\mu_2}|D|)-\frac{1}{\sqrt{\mu_2}}\Big](\varphi_k\nu_{n_k})dx\\
\\
&= \int_{\mathbb R} \zeta_k \nu_{n_k} \mathcal T|D|(\varphi_k\nu_{n_k})dx\equiv L.
\end{array}
\end{equation}
Now, we  estimate $L$. Since $D=\mathcal H \partial_x$, we have by using commutators,
\begin{equation}\label{3N_6}
L=  \int_{\mathbb R}  \zeta_k \nu_{n_k} \mathcal T(\varphi_k \mathcal H \nu'_{n_k}) +\zeta_k \nu_{n_k} 
\mathcal T[\mathcal H, \varphi_k] \nu'_{n_k} +  \zeta_k \nu_{n_k} 
\mathcal T \mathcal H(\varphi_k' \nu_{n_k}) dx\equiv L_1+L_2+L_3.
\end{equation}
Next, since the symbol, $m$, associated with  the nonlocal operator $\mathcal T$,
$$
m(x)=\coth(\sqrt{\mu_2}|x|)-\frac{1}{\sqrt{\mu_2}|x|},
$$
satisfies that $m\in C^\infty(\mathbb R-\{0\})$ and for all $x\neq  0$
$$
\Big |\frac{d^k}{dx^k}m(x)\Big |\leqq C_k\frac{1}{|x|^k}
$$
with $k\geqq 0$, it follows from Coifman-Meyer Theorem (see \cite{CM}) and from $\zeta_k\varphi_k\equiv 0$ that for $R_k$ large,
\begin{equation}\label{4N_6}
L_1=  \int_{\mathbb R}  \zeta_k \nu_{n_k} [\mathcal T,\varphi_k ]\mathcal H \nu'_{n_k}\leqq \|\nu_{n_k}\|\|[\mathcal T,\varphi_k ]\mathcal H \nu'_{n_k}\|\leqq C|\varphi'_k|_{\infty}\|\nu_{n_k}\|^2\leqq C \frac{|\varphi'|_{\infty}}{R_k} \|{\bf{h}}_{n_k}\|_{1\times 1}^2=O(\epsilon).
\end{equation}
Next, from the bounded property of $\mathcal T$ and Calderon commutator Theorem (\cite{C}) we obtain for $R_k$ large
\begin{equation}\label{5N_6}
L_2\leqq  \|\nu_{n_k}\|\|[\mathcal H,\varphi_k ] \nu'_{n_k}\|\leqq C|\varphi'_k|_{\infty}\|\nu_{n_k}\|^2=O(\epsilon).
\end{equation}
Similarly, we obtain
\begin{equation}\label{6N_6}
L_3\leqq  \|\nu_{n_k}\|\|\varphi_k\nu_{n_k}\|\leqq C|\varphi'_k|_{\infty}\|\nu_{n_k}\|^2=O(\epsilon).
\end{equation}
Thus, from (\ref{4N_6})-(\ref{5N_6})-(\ref{6N_6}) we obtain $L=O(\epsilon)$.

Now, we estimate the  term $N_7$. By denoting $J_0(|D|)=1-\coth^2(\sqrt{\mu_2}|D|)$, we deduce that  the linear operator $J_0(|D|)\partial_x^2$ is $L^2(\mathbb R)$-bounded
since the associated symbol $(\coth^2(\sqrt{\mu_2}|y|)-1)y^2$  is bounded on the real line,  and so from Cauchy-Schwarz inequality we obtain
\begin{equation}\label{1N_7}
\begin{split}
\int_{\mathbb R}  b_k J(|D|)\partial_x^2 (\zeta_k \nu_{n_k} &+\varphi_k \nu_{n_k}) dx = \int_{\mathbb R}  (a-\frac{1}{\gamma^2}) b_k \partial_x^2(\zeta_k \nu_{n_k}+ \varphi_k \nu_{n_k}) dx\\
&+ \frac{1}{\gamma^2}\int_{\mathbb R} b_kJ_0(|D|)\partial_x^2(\zeta_k \nu_{n_k} +\varphi_k \nu_{n_k}) dx\\
&\leqq C ( \|b'_k\| \| \nu'_{n_k}\| + \|b_k\| \| \nu_{n_k}\|)\leqq C\| {\bf{w}}_{k}\|_{1\times 1}\|{\bf{h}}_{n_k}\|_{1\times 1}=O(\epsilon).
\end{split}
\end{equation}

Next, we define the bounded linear operator $\mathcal T_1: L^2(\mathbb R)\to L^2(\mathbb R)$  by
\begin{equation}\label{T_1}
\mathcal T_1= \coth^2(\sqrt{\mu_2}|D|)-\frac{1}{\mu_2 |D|^2},
\end{equation}
with  symbol $m_1$, defined by $
m_1(x)=\coth^2(\sqrt{\mu_2}|x|)-\frac{1}{\mu_2|x|^2}$.

Then, since $m_1\in C^\infty(\mathbb R-\{0\})$ and for all $x\neq  0$
$$
\Big |\frac{d^k}{dx^k}m_1(x)\Big |\leqq C_k\frac{1}{|x|^k}
$$
with $k\geqq 0$, it follows  from Coifman-Meyer \cite{CM}  and from the fact that 
$\zeta_k$ and $\varphi_k$ and its derivatives have disjoint support, that  for $R_k$ large enough,
\begin{equation}\label{2N_7}
\begin{split}
\int_{\mathbb R} \zeta_k \nu_{n_k}  &J(|D|)\partial_x^2(\varphi_k \nu_{n_k})dx=\frac{1}{\gamma^2} \int_{\mathbb R} \zeta_k \nu_{n_k}  \Big[|D|^2  coth^2(\sqrt{\mu_2}|D|)-\frac{1}{\mu_2}\Big ](\varphi_k \nu_{n_k})dx\\
&=-\frac{1}{\gamma^2} \int_{\mathbb R} \zeta_k \nu_{n_k} \mathcal T_1\partial_x^2(\varphi_k \nu_{n_k})dx=\frac{1}{\gamma^2} \int_{\mathbb R} (\zeta_k \nu_{n_k})' \mathcal T_1(\varphi'_k \nu_{n_k}) + (\zeta_k \nu_{n_k})' \mathcal T_1(\varphi_k \nu'_{n_k})dx\\
&\leqq C \frac{|\varphi '|_{\infty}}{R_k}\|\nu_{n_k}\|_1^2 + \frac{1}{\gamma^2} \int_{\mathbb R} (\zeta_k \nu_{n_k})'[\mathcal T_1, \varphi_k] \nu'_{n_k}dx\\
&\leqq C \frac{|\varphi '|_{\infty}}{R_k}\|\nu_{n_k}\|_1^2 + C\|\nu_{n_k}\|_1\|\varphi'_k\|\|\nu_{n_k}\|\\
&\leqq C \frac{1}{R_k}\|{\bf{h}}_{n_k}\|^2_{1\times 1}=O(\epsilon).
\end{split}
\end{equation}

Thus, for $R_1, R_k$, large enough,  we have from (\ref{key3})
\begin{equation}\label{key7}
E_{\mu_2}({\bf{h}}_{n_k})=E_{\mu_2}({\bf{h}}_{k,1}) + E_{\mu_2}({\bf{h}}_{k,2}) + O(\epsilon).
\end{equation}
Therefore, we obtain that $I_\lambda \geqq I_\theta + I_{\lambda-\theta}$ for some $\theta\in (0, \lambda)$ and so we obtain a contradiction with (\ref{sub2}). Thus, the compactness alternative of Lions's Lemma \ref{lions} will imply the existence of a minimum for the problem (\ref{I}) and then we obtain nontrivial solutions for equation (\ref{0soli}). 

\section{Regularity and asymptotics of the solitary waves}

The spatial decay of solitary waves at infinity is essentially governed by the behavior of the linear part in the solitary wave equation. Paley-Wiener 
type arguments show that no exponential decay is possible when the symbol of the linear operator is not smooth. One expects then an algebraic decay rate, which is the case of the Benjamin-Ono solitary wave (decaying as $1/{x^2})$ or of the KP I equation (see \cite{deBS}).
The situation should be thus quite different for the solitary waves of \eqref{BF1} depending on the value on the finiteness of $\mu_2.$ When $\mu_2$ is finite the nonlocal operator has a smooth symbol and one expects the exponential decay of solitary waves as it is the case for the ILW equation. 

On the other hand when $\mu_2$ is infinite the nonlocal operator involves the Hilbert transform whose symbol has a singularity at the origin and one expects an algebraic decay, as for the BO equation.

Technically following an idea in \cite{BL} one writes the equation for solitary waves as a convolution with the inverse of a linear operator whose symbol determines the decay rate of the solitary waves.

\subsection{Regularity and decay of solitary wave solutions for $\mu_2$ infinite}

We will prove here that the solutions for system (\ref{soli}) have a polynomial decay. More exactly, we have the following theorem

\begin{teo}\label{decay} Let $(\xi, \nu)$ be a solution of system (\ref{soli}) given by Theorem \ref{main}. Then $\xi, \nu \in H^{\infty}(\mathbb R)$. Moreover,  there exists a constant $D\in \mathbb R$ such that for all $x\in \mathbb R$,
$$
x^2 |\xi(x)|\leqq D,\;\;\text{and}\;\; x^2 |\nu(x)|\leqq D.
$$
\end{teo}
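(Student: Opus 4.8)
The plan is to prove the $H^\infty$ regularity first by a bootstrap argument, and then establish the quadratic decay by rewriting the solitary-wave system as convolution equations and analyzing the kernels produced by the inverses of the linear operators.

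First I would address regularity. The solution $(\xi,\nu)\in H^1\times H^1$ solves (\ref{soli}). The key observation is that the operators $J_b$ and $\mathcal{L}_{+\infty}$ are invertible with symbols behaving like $\langle x\rangle^2$ at infinity, so that $J_b^{-1}$ and $\mathcal{L}_{+\infty}^{-1}$ gain two derivatives (or at least map $L^2$ into $H^2$, since $\mathcal{L}_{+\infty}$ has the symbol $\frac1\gamma-\frac{\sqrt\mu}{\gamma^2}|x|+\alpha x^2$ which grows quadratically). Solving the linear system (\ref{soli}) for $(\xi,\nu)$ in terms of the quadratic right-hand sides $\xi\nu$ and $\nu^2$, one inverts the $2\times2$ matrix of Fourier multipliers. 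Because the products $\xi\nu,\ \nu^2\in H^1$ (as $H^1$ is an algebra in one dimension), the gain of two derivatives forces $(\xi,\nu)\in H^2\times H^2$. Iterating this bootstrap — at each stage the nonlinear terms lie in $H^s$ whenever $(\xi,\nu)\in H^s\times H^s$, and the inversion returns $H^{s+1}$ at worst (the $|x|$ term only gains one derivative) — yields $(\xi,\nu)\in H^\infty\times H^\infty$, hence smooth and decaying.

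Next, for the quadratic decay, I would follow the idea attributed to \cite{BL}. Write the first equation of (\ref{soli}) in the form $\nu = \mathcal{L}_{+\infty}^{-1}(\omega J_b\xi + 2r\xi\nu)$ and similarly isolate $\xi$ from the second equation, or better, decouple the system to obtain scalar convolution equations $\xi = k_1 * N_1$ and $\nu = k_2 * N_2$ where $N_1,N_2$ are quadratic nonlinearities (smooth, rapidly decaying by the $H^\infty$ result) and $k_1,k_2$ are the convolution kernels whose Fourier transforms are the entries of the inverse of the linear operator matrix. The decay rate of $\xi,\nu$ is then dictated by the decay of these kernels. The crucial point is that the symbols involve $|x|=|D|$ through $\mathcal{L}_{+\infty}$, so the multipliers are \emph{not} smooth at the origin: their singularity is of the type $|x|$, and a Paley–Wiener/stationary-phase analysis of the singularity $|x|$ at $\xi=0$ produces precisely an $x^{-2}$ decay of the kernel, matching the known Benjamin–Ono rate. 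One shows $|k_i(x)|\leq C\langle x\rangle^{-2}$ and combines this with the rapid decay of $N_i$ via a convolution estimate to conclude $x^2|\xi(x)|,\ x^2|\nu(x)|\leq D$.

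The main obstacle will be the kernel analysis: extracting the sharp $x^{-2}$ decay from the inverse multiplier. The difficulty is twofold. First, the inverse of the $2\times2$ system mixes the smooth symbol of $J_b$ with the nonsmooth symbol of $\mathcal{L}_{+\infty}$, so I must verify that the leading singularity of each diagonal entry of the inverse matrix near the origin is still of the form (smooth) $+$ (constant)$\cdot|x|$ $+\cdots$, i.e. that inversion does not destroy the $C^{1,1}$-at-origin structure that governs the $x^{-2}$ tail. Second, I must control the decay of the convolution $k_i * N_i$ carefully: since $N_i$ decays rapidly but $k_i$ only like $x^{-2}$, the convolution inherits the slower $x^{-2}$ rate, and one must justify this by splitting the integral into the region where $k_i$ controls and the region where $N_i$ controls, using the smoothness and integrability established in the regularity step. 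The delicate estimate is showing that the singularity of the symbol at the origin is exactly $|x|$ (not worse), so that the associated kernel decays as $x^{-2}$ and no slower, which requires expanding $\mathcal{L}_{+\infty}^{-1}$ and the matrix determinant near $x=0$ and isolating the term linear in $|x|$.
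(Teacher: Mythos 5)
Your overall strategy (bootstrap for $H^\infty$, then a Bona--Li convolution/kernel analysis giving the $x^{-2}$ rate from the $|y|$ singularity of the symbol at the origin) is the same as the paper's. Two remarks on the differences of implementation. The paper avoids your worry about inverting the full $2\times 2$ multiplier matrix: it uses the second equation of \eqref{soli}, which involves only the polynomial symbols of $J_b$ and $J_c$, to solve for $\xi$ explicitly, namely $(1-\gamma)\xi=\omega J_c^{-1}J_b\nu+rJ_c^{-1}(\nu^2)$, and substitutes into the first equation to get a single scalar equation $(1-\gamma)\mathcal L_{+\infty}\nu=G(\nu)$. The only nonsmooth symbol is then that of $\mathcal L_{+\infty}$, whose inverse kernel $\widehat K(y)=(y^2-\ell|y|+c)^{-1}$ is computed \emph{explicitly} by residues, yielding $\lim_{|x|\to\infty}x^2K(x)=-2\ell/(c^2\sqrt{2\pi})$; the decay of $\xi$ is then transferred from that of $\nu$ using the explicit exponential kernel of $J_c^{-1}$. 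This is cleaner than your proposed expansion of the determinant of the matrix symbol near the origin, though your route could be made to work.

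There is, however, one genuine gap in your decay argument: you assert that the nonlinearities $N_i$ are ``rapidly decaying by the $H^\infty$ result,'' and your final convolution estimate (splitting the integral according to whether $k_i$ or $N_i$ dominates) leans on that rapid decay. This is false: membership in $H^\infty(\mathbb R)$ gives no pointwise decay rate beyond vanishing at infinity, and the right-hand side $G(\nu)$ is built from $\nu$ and $\nu^2$, so a priori it decays no faster than $\nu$ itself --- which is exactly the unknown quantity. The decay cannot be fed into the convolution from the outside; it must be extracted self-consistently from the fixed-point structure $\nu=\mathrm{const}\cdot K\ast G(\nu)$, which is precisely what the Bona--Li theorem does: for an equation $u=k\ast f(u)$ with $k$ decaying like $|x|^{-2}$ and $f$ superlinear, the solution inherits the algebraic decay of the kernel. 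You cite \cite{BL} for the idea of writing the equation in convolution form, but you replace its decisive bootstrap-on-the-decay-rate mechanism by an unavailable a priori decay of the nonlinearity; as written, the last step of your argument does not close. (A related point to watch: $G(\nu)$ also contains a term \emph{linear} in $\nu$, coming from $\omega^2J_c^{-1}J_b^2\nu$, so one must either absorb it into the operator being inverted or verify that the Bona--Li machinery tolerates it.)
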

\begin{remark}
The decay rate of the solitary wave is exactly the same as that of the (explicit) solitary wave of the Benjamin-Ono equation.
\end{remark}

 \begin{proof} For $(\xi, \nu)$ solution of system (\ref{soli}) we have the following relation for $\xi$
\begin{equation}\label{decay1}
(1-\gamma)\xi= \omega J^{-1}_cJ_b\nu+ rJ^{-1}_c(\nu^2)
\end{equation}
and thus from the first equation in (\ref{soli}), we have the relation for $\nu$
\begin{equation}\label{decay2}
(1-\gamma)\mathcal L_{+\infty}\nu=G(\nu)
\end{equation}
with $G(\nu)=\omega^2 J^{-1}_cJ^2_b\nu + \omega r J^{-1}_cJ_b (\nu^2) + 2\omega r\nu J_c^{- 1}J_b\nu + 2r^2\nu J^{-1}_c(\nu^2)$. We note that by the regularity properties of $\nu$, $G(\nu)\in L^2(\mathbb R)$. Therefore, by using a bootstrap argument we obtain from (\ref{decay2}) that $\nu \in H^{\infty}(\mathbb R)$ and so from the second equation in (\ref{soli}) follows  $\xi \in H^{\infty}(\mathbb R)$.

Next,  equation (\ref{decay2}) can be written in the convolution form
\begin{equation}\label{decay3}
\nu=\frac{1}{1-\gamma} \mathcal L^{-1}_{+\infty}[G(\nu)]=\frac{1}{\beta_1(1-\gamma)}  K\ast G(\nu)
\end{equation}
where $\beta_1= -\frac{\mu}{\gamma}(a-\frac{1}{\gamma^2})$ and 
the kernel $K$ is the inverse Fourier transform of
\begin{equation}\label{decay4}
\widehat{K}(y)=\frac{1}{y^2- \ell |y| +c}
\end{equation}
for $\ell=\frac{\sqrt{\mu}}{\beta_1 \gamma^2}$ and $c=\frac{1}{\beta_1 \gamma}$. So, via the Residue Theorem (see \cite{An}), it  follows that $K$ is determined explicitly by
\begin{equation}\label{decay5}
K(x)= -\frac{2\ell}{\sqrt{2\pi}}\int_0^{+\infty} \frac{y e^{-|x|y}}{(c-y^2)^2 +\ell^2 y^2}dy +
\frac{2 \sqrt{2\pi}}{\sqrt{4c-\ell^2}} e^{-\frac{\sqrt{4c-\ell^2}}{2}|x|}cos(\ell x/2).
\end{equation}
We note that $4c-\ell^2>0$, because $a\leqq 0$ and $1-a\gamma^2>\frac14$. We also have that $K\in C^\infty(\mathbb R-\{0\})$. Now, according to the theory in Bona\&Li \cite{BL} the asymptotic properties of $\nu$ satisfying (\ref{decay3}) are essentially based on those of the kernel $K$. In the present context, since $ \widehat{K}\in H^s(\mathbb R)$ for $s>3/2$, and from (\ref{decay5})
$$
\lim_{|x|\to +\infty} x^2 K(x)=-\frac{2\ell}{c^2\sqrt{2\pi}}.
$$
Then, it follows that $\nu$ satisfies the relation
$$
\lim_{|x|\to \infty} x^2 \nu(x)=D,
$$
where $D$ is a constant. Now, with regard to the polynomial decay of $\xi$ we will see that if $f\in H^n (\mathbb R)$,  satisfies $|x^2 f(x)|\leqq C$ for $x\in \mathbb R$, then $|x^2 J_c^{-1}f(x)|\leqq C_1$ for $x\in \mathbb R$. Indeed, since the Fourier transform of $h(x)=\frac{1}{1-\mu c x^2}$ is given by 
$$
\widehat{h}(y)=\frac{\pi}{\sqrt{-\mu c}} e^{-\sqrt{-\mu c}|y|}
$$
we have for $\theta=\sqrt{-\mu c}$
\begin{equation}\label{decay6}
\begin{split}
\frac{\theta}{\pi} |x^2 J_c^{-1}f(x)|&=|\int_{\mathbb R} e^{-\theta|x-y|}x^2 f(y)dy|\leqq \int_{\mathbb R} e^{-\theta |x-y|}|x-y|^2 |f(y)|dy + \int_{\mathbb R} e^{-\theta |x-y|}y^2 |f(y)|dy\\
&\leqq |f|_{\infty} \int_{\mathbb R} e^{-\theta |p|}p^2 dp + |p^2 f|_{\infty} \int_{\mathbb R} e^{-\theta |p|}dp <\infty.
\end{split}
\end{equation}
Therefore, we obtain that $|x^2 J_c^{-1}(\nu^2)(x)|\leqq D_1$ for all $x\in \mathbb R$. Similarly, by using integration by parts we can obtain $|x^2 J_c^{-1}J_b\nu(x)|\leqq D_2$ for all $x\in \mathbb R$. Thus, from (\ref{decay1}) we have $|x^2 \xi(x)|\leqq D_3$ for all $x\in \mathbb R$. 

This achieves the proof of  the Theorem. 
\end{proof}

\subsection{Regularity and decay of solitary wave solutions for $\mu_2$ finite}

As aforementioned we expect smoothness and exponential decay of the solitary waves solutions here since the symbols of the linear dispersive parts are smooth. We proceed as in the case $\mu_2$ infinite.  More exactly, we have the following theorem.

\begin{teo}\label{decayfi} Let $(\xi, \nu)$ be a solution of system \eqref{0soli} given by Theorem \ref{main}-(b). Then $\xi, \nu\in H^\infty(\mathbb R)$. Moreover, there exists a constant $M\in \mathbb R^+$ such that for all $x\in \mathbb R$,
$$
e^{\sigma|x|}  |\nu(x)|\leqq M ,\;\;\text{and}\;\; e^{\sigma_0|x|} |\xi(x)|\leqq M,
$$
with $\sigma>0$ such that $\sigma^2=-\frac{1}{a\mu}(1+\frac{\mu}{\mu_2\gamma^2}-\frac{\sqrt{\mu}}{\gamma \sqrt{\mu_2}})$ and with $\sigma_0\in (0, \sigma]$ and $\sigma_0<\sqrt{-c\mu}$.
\end{teo}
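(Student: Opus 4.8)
The plan is to mirror the scheme of Theorem \ref{decay}, replacing $\mathcal{L}_{+\infty}$ by $\mathcal{L}_{\mu_2}$ and exploiting that, for $\mu_2$ finite, the symbol of $\mathcal{L}_{\mu_2}$ is smooth (indeed it extends holomorphically to a horizontal strip), so that the algebraic decay of the $\mu_2=+\infty$ case is upgraded to exponential decay. First I would reduce the system to a single equation for $\nu$: exactly as in (\ref{decay1})--(\ref{decay2}), the second equation of (\ref{0soli}) gives $(1-\gamma)\xi=\omega J_c^{-1}J_b\nu+rJ_c^{-1}(\nu^2)$, and substitution into the first yields $(1-\gamma)\mathcal{L}_{\mu_2}\nu=G(\nu)$ with the same nonlinearity $G$. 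Since $\coth(\sqrt{\mu_2}|y|)\to 1$ as $|y|\to\infty$, the symbol of $\mathcal{L}_{\mu_2}$ grows like $\frac{\mu}{\gamma}(\frac{1}{\gamma^2}-a)y^2$, so $\mathcal{L}_{\mu_2}^{-1}$ is smoothing of order two; a bootstrap identical to the one in Theorem \ref{decay} (after moving the order-two linear term $\omega^2 J_bJ_c^{-1}J_b\nu$ to the left to form a positive elliptic operator of order two, whose positivity is precisely the coercivity of Proposition \ref{propert2}) then gives $\nu\in H^\infty(\mathbb{R})$ and, via the second equation, $\xi\in H^\infty(\mathbb{R})$.

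For the exponential decay of $\nu$ I would follow \cite{BL} and write (\ref{decay2}) in the convolution form (\ref{decay3}), $\nu=\frac{1}{1-\gamma}\mathcal{L}_{\mu_2}^{-1}[G(\nu)]=c\,K_{\mu_2}\ast G(\nu)$, now with $\widehat{K_{\mu_2}}$ proportional to $1/m_{\mu_2}$, where $m_{\mu_2}$ is the symbol of $\mathcal{L}_{\mu_2}$ from (\ref{L2}). Because $\mu_2<\infty$, the maps $y\mapsto y\coth(\sqrt{\mu_2}y)$ and $y\mapsto y^2\coth^2(\sqrt{\mu_2}y)$ are even and holomorphic in the strip $|\mathrm{Im}\,y|<\pi/\sqrt{\mu_2}$ (the first pole of $\coth$), so $m_{\mu_2}$ continues holomorphically there. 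Restricting to the imaginary axis $y=i\tau$ and using $\coth(iz)=-i\cot z$ turns $m_{\mu_2}(i\tau)$ into a real, even function of $\tau$, whose first positive zero governs the decay rate; replacing the $\coth$-factors by their values at the origin (via $\tau\cot(\sqrt{\mu_2}\tau)\le 1/\sqrt{\mu_2}$, the imaginary-axis analogue of (\ref{coth})) gives the explicit value $\sigma$ with $\sigma^2=-\frac{1}{a\mu}(1+\frac{\mu}{\mu_2\gamma^2}-\frac{\sqrt{\mu}}{\gamma\sqrt{\mu_2}})$, and one checks $\sigma<\pi/\sqrt{\mu_2}$, so that the singularity of $\widehat{K_{\mu_2}}$ nearest the real axis is this zero and not a pole of $\coth$. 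A residue computation as in (\ref{decay5}) and \cite{An} then shows $K_{\mu_2}(x)\sim Ce^{-\sigma|x|}$.

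The decay of the kernel must then be propagated to $\nu$. I would run a continuity/weighted-norm argument: for a bounded Lipschitz approximation $\phi_{\delta,N}$ of $e^{\delta|x|}$ with $\delta<\sigma$, conjugation shows that $\phi_{\delta,N}\mathcal{L}_{\mu_2}^{-1}\phi_{\delta,N}^{-1}$ is bounded on $L^2(\mathbb{R})$ uniformly in $N$ (its symbol is essentially $1/m_{\mu_2}(y+i\delta)$, finite since $m_{\mu_2}$ has no zero in the strip), while $\phi_{\delta,N}$ applied to the superlinear part of $G(\nu)$ is controlled by $\|\nu\|_\infty\,\|\phi_{\delta,N}\nu\|$ with a small factor. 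Absorbing this term gives $e^{\delta|x|}\nu\in L^2$ for every $\delta<\sigma$; the endpoint rate $\sigma$ follows because, once the order-two linear term is transferred to the operator side, the remaining nonlinearity decays strictly faster than $e^{-\sigma|x|}$, so the convolution inherits the clean rate, yielding $e^{\sigma|x|}|\nu(x)|\le M$. For $\xi$ I would return to $(1-\gamma)\xi=\omega J_c^{-1}J_b\nu+rJ_c^{-1}(\nu^2)$: the kernel of $J_c^{-1}$ decays exponentially at the rate fixed by the purely imaginary pole of its symbol $(1-\mu c\,y^2)^{-1}$, governed by $\sqrt{-c\mu}$, whereas $J_b\nu$ and $\nu^2$ decay at rates $\sigma$ and $2\sigma$. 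Estimating the two convolutions with exponential weights then yields $e^{\sigma_0|x|}|\xi(x)|\le M$ for any admissible $\sigma_0\in(0,\sigma]$ with $\sigma_0<\sqrt{-c\mu}$, as stated.

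The main obstacle is the second paragraph: verifying that $m_{\mu_2}$ has no zero in the open strip $\{|\mathrm{Im}\,y|<\sigma\}$ and that the explicit value of $\sigma$ is genuinely admissible. On the imaginary axis the term $-\frac{\sqrt{\mu}}{\gamma^2}|y|\coth$ and the term $+\frac{\mu}{\gamma^3}\coth^2\partial_x^2$ pull $m_{\mu_2}(i\tau)$ in opposite directions relative to its frozen-at-origin approximation, so the elementary bounds on $\coth$ and $\cot$ do not by themselves sandwich the true symbol; a careful monotonicity analysis of $\tau\mapsto m_{\mu_2}(i\tau)$ on $(0,\pi/\sqrt{\mu_2})$ is required. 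This, together with upgrading the weighted bound from every $\delta<\sigma$ to the sharp endpoint $\sigma$, is the delicate part; the regularity and the treatment of $\xi$ are, by contrast, routine adaptations of the $\mu_2=+\infty$ case.
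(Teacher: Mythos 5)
Your reduction to a single equation for $\nu$ and the regularity bootstrap are essentially in line with the paper (the paper splits $\gamma\mathcal L_{\mu_2}$ using the bounded multipliers $\mathcal T=\coth(\sqrt{\mu_2}|D|)-\frac{1}{\sqrt{\mu_2}|D|}$ and $\mathcal T_1=\coth^2(\sqrt{\mu_2}|D|)-\frac{1}{\mu_2|D|^2}$ of \eqref{T_1} plus an elliptic second-order piece, rather than moving $\omega^2 J_bJ_c^{-1}J_b\nu$ to the left, but both routes give $\nu\in H^2$ and then $H^\infty$). The decay argument is where you genuinely diverge, and where there is a gap that you yourself flag. You propose to invert $\mathcal L_{\mu_2}$ outright, continue its symbol $m_{\mu_2}$ holomorphically into the strip $|\mathrm{Im}\, y|<\pi/\sqrt{\mu_2}$, and read the decay rate off the first purely imaginary zero of $m_{\mu_2}$. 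For this to yield the stated rate you must prove both that $m_{\mu_2}$ is zero-free in $\{|\mathrm{Im}\, y|<\sigma\}$ and that its first imaginary zero sits exactly at $i\sigma$ with $\sigma^2=-\frac{1}{a\mu}\bigl(1+\frac{\mu}{\mu_2\gamma^2}-\frac{\sqrt{\mu}}{\gamma\sqrt{\mu_2}}\bigr)$. Neither is established: as you observe, the $\coth$ and $\coth^2$ contributions pull $m_{\mu_2}(i\tau)$ in opposite directions from its frozen-at-origin approximation, so the elementary bounds do not locate the zero; and in fact the $\sigma$ of the theorem is the zero of the \emph{frozen} symbol, not demonstrably of the true one. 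If $m_{\mu_2}(i\tau)$ vanished at some $\tau_*<\sigma$, your kernel $K_{\mu_2}$ would decay only at rate $\tau_*$ and the claimed estimate would fail. So the central step of your second paragraph is missing, not merely delicate.

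The paper circumvents this entirely. Writing $|y|\coth(\sqrt{\mu_2}|y|)=|y|\,\mathcal T+\frac{1}{\sqrt{\mu_2}}$ and $y^2\coth^2(\sqrt{\mu_2}|y|)=y^2\mathcal T_1+\frac{1}{\mu_2}$, it recasts \eqref{eqnu} as the exact constant-coefficient equation \eqref{N3}, $-a\mu(|D|^2+\sigma^2)\nu=G_2(\nu)$, where $G_2$ absorbs the nonlinearity together with the remainders $\frac{\sqrt{\mu}}{\gamma}|D|\mathcal T\nu-\frac{\mu}{\gamma^2}|D|^2\mathcal T_1\nu$, and $\sigma$ is determined purely algebraically by matching the zeroth-order coefficient, $-a\mu\sigma^2=1+\frac{\mu}{\mu_2\gamma^2}-\frac{\sqrt{\mu}}{\gamma\sqrt{\mu_2}}$. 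The operator actually inverted is $|D|^2+\sigma^2$, whose kernel is exactly a multiple of $e^{-\sigma|x|}$, and the Bona--Li convolution argument is applied to that; no complex-analytic information about $m_{\mu_2}$ is required. To salvage your route you would have to carry out the zero-free/monotonicity analysis of $m_{\mu_2}(i\tau)$ on $(0,\sigma]$ that you defer; otherwise adopt the paper's splitting. Your treatment of $\xi$ via the exponentially decaying kernel of $J_c^{-1}$, with the rate capped by $\sqrt{-c\mu}$, does match the paper's.
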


 \begin{proof} For $(\xi, \nu)$ solution of system \eqref{0soli} we have the following relation for $\xi$
\begin{equation}\label{xi}
(1-\gamma)\xi= \omega J^{-1}_cJ_b\nu+ \frac{\epsilon}{2\gamma}J^{-1}_c(\nu^2)\equiv G_0(\nu).
\end{equation}
Thus, we deduce from \eqref{0soli} that $\nu$ should satisfy the equation
\begin{equation}\label{eqnu} 
(1-\gamma)\mathcal L_{\mu_2}\nu =\frac{\epsilon}{2\gamma} \nu G_0(\nu)+\omega J_bG_0(\nu)\equiv G_1(v).
\end{equation}
Now, since $\nu\in H^1(\mathbb R)$ it follows immediately that $G_1(v)\in L^2(\mathbb R)$. Next we show that $\nu\in H^2(\mathbb R)$. Actually, from the definition for $\mathcal L_{\mu_2}$ in (\ref{L2}) we can split it in two operators $\mathcal M, \mathcal N$, such that $ \gamma\mathcal L_{\mu_2}= \mathcal M+ \mathcal N$ with
\begin{equation}\label{M}
\mathcal M=1-\frac{\sqrt{\mu}}{\gamma \sqrt{\mu_2}}-\frac{\sqrt{\mu}}{\gamma}|D|\Big[\coth(\sqrt{\mu_2}|D|)- \frac{1}{\sqrt{\mu_2} |D|}\Big],
\end{equation}
and 
\begin{equation}\label{N}
 \mathcal N=\frac{\mu}{\gamma^2}\Big[\coth^2(\sqrt{\mu_2}|D|)-\frac{1}{\mu_2 |D|^2}-a\gamma^2\Big]|D|^2 +\frac{\mu}{\mu_2\gamma^2}.
 \end{equation}
Now, from the boundedness of $\mathcal T=\coth(\sqrt{\mu_2}|D|)- \frac{1}{\sqrt{\mu_2} |D|}$ 
on $L^2(\mathbb R)$ it follows from  (\ref{M}) that $\mathcal M \nu\in L^2(\mathbb R)$. By defining $\mathcal T_2=\frac{\mu}{\gamma^2}\Big [\coth^2(\sqrt{\mu_2}|D|)-\frac{1}{\mu_2 |D|^2}-a\gamma^2\Big]|D|^2$ we obtain from (\ref{eqnu}) that
 \begin{equation}\label{N2}
\mathcal T_2\nu= \frac{\gamma}{1-\gamma} G_1(\nu)-\mathcal M\nu-\frac{\mu}{\mu_2\gamma^2} \nu\in L^2(\mathbb R).
 \end{equation}
Thus, since $\coth^2(\sqrt{\mu_2}|x|)-\frac{1}{\mu_2 |x|^2}-a\gamma^2\geqq  -a\gamma^2$, we obtain from $a<0$ the relation $\|\mathcal T_2\nu\|^2\geqq \mu^2 a^2\int |y|^4|\hat{v}(y)|^2dy$ and therefore $\nu \in H^2(\mathbb R)$. Thus, a bootstrap argument applied to equation (\ref{eqnu}) implies $\nu\in H^\infty(\mathbb R)$ and from (\ref{decay1}) follows immediately  that $\xi\in H^\infty(\mathbb R)$.

Next, we prove the exponential decay of the profile $\nu$. Indeed, from (\ref{eqnu})  follows the relation
\begin{equation}\label{N3}
-a\mu(|D|^2 +\sigma^2)\nu= \frac{\gamma}{1-\gamma} G_1(\nu)+ \frac{\sqrt{\mu}}{\gamma}|D| \mathcal T\nu-\frac{\mu}{\gamma^2}|D|^2 \mathcal T_1v\equiv G_2(\nu)\in L^2(\mathbb R),
 \end{equation}
where $\mathcal T_1$ was defined in (\ref{T_1}) and $\sigma>0$ is such that $\sigma^2=-\frac{1}{a\mu}(1+\frac{\mu}{\mu_2\gamma^2}-\frac{\sqrt{\mu}}{\gamma \sqrt{\mu_2}})$ (we note from Theorem \ref{main}-(b) that $1>\frac{\sqrt{\mu}}{\gamma \sqrt{\mu_2}}$). Thus, from (\ref{N3}) we obtain
\begin{equation}\label{N4}
\nu=-\frac{1}{a\sigma \mu}K_1\star G_2(\nu),
\end{equation}
where $K_1$ is the inverse Fourier transform of
$$
\widehat{K_1}(y)=\frac{\sigma}{\sigma^2+y^2}.
$$
Namely, $K_1(x)=\pi e^{-\sigma |x|}$. Thus, from Bona\&Li in \cite{BL} follows the relation 
\begin{equation}\label{N5}
\lim_{|x|\to \infty} e^{\sigma|x|} \nu(x)=C,
\end{equation}
where $C$ is a constant.

Now, with regard to the exponential decay of $\xi$ in (\ref{xi}) we have  that if $g\in H^n (\mathbb R)$,  satisfies $| e^{\sigma |x|}g(x)|\leqq C_1$ for $x\in \mathbb R$, then $|e^{\sigma _0|x|}J_c^{-1}g(x)|\leqq C_2$ for $x\in \mathbb R$ with $\sigma_0\in (0, \sigma]$ and $\sigma_0<\sqrt{-c\mu}$. Indeed, for $\theta=\sqrt{-\mu c}$ we have
\begin{equation}\label{decay7}
\begin{split}
\frac{\theta}{\pi} |e^{\sigma _0|x|}J_c^{-1}g(x)|&\leqq \int_{\mathbb R} e^{(\sigma_0-\theta) |x-y|} e^{\sigma _0|y|}|g(y)|dy\\
&\leqq sup_{y\in \mathbb R}|e^{\sigma _0|y|} g(y)| \int_{\mathbb R} e^{(\sigma_0-\theta) |p|} dp  <\infty.
\end{split}
\end{equation}
Therefore, we obtain that $|e^{\sigma _0|x|}J_c^{-1}(\nu^2)(x)|\leqq D_1$ and $|e^{\sigma _0|x|}J_c^{-1}(J_b\nu)(x)|\leqq D_2$ for all $x\in \mathbb R$ (we note from (\ref{N4}) that $-a\sigma \mu J_b\nu=K_1\star  (J_b G_2(\nu))$ and so $| e^{\sigma |x|}J_bv(x)|\leqq C_4$ for $x\in \mathbb R$). Thus, from (\ref{xi}) we have $| e^{\sigma _0|x|}\xi(x)|\leqq D_3$ for all $x\in \mathbb R$. 

This achieves the proof of  the Theorem. 
\end{proof}

\section{Existence of solitary waves solutions for BO systems}

In this section we show the existence of even solitary waves solutions for the Benjamin-Ono system (henceforth BO-systems)

Solitary waves solutions for (\ref{BO1}), that is of the form
$$
\zeta(x,t)=\xi(x-ct),\quad v(x,t)=\nu (x-ct),\qquad c\in \mathbb R
$$
and $(\xi, \nu)$ vanish at infinity, it will satisfy the systems
\begin{equation}\label{BO3}
\left\{\begin{array}{lll}
-c\mathcal D \xi +\mathcal B\nu =\frac{\epsilon}{\gamma} \xi\nu,\\
\xi=\frac{1}{1-\gamma} \Big (c \nu + \frac{\epsilon}{2\gamma}\nu^2\Big).
\end{array}\right.
\end{equation}
Our approach of the existence of a smooth curve $c\in (-\delta, \delta)\to (\xi_c, \nu_c)$ of solutions for (\ref{BO3}) will be based on the Implicit Function Theorem. 

Indeed, for $s\geqq 0$, let $H^s_e(\mathbb R)$ denote the closed subspace of all even functions in $H^s(\mathbb R)$. Our existence theorem is the following,

\begin{teo}\label{exisBO}
Let $\gamma\in (0,1)$. Then there exists $\delta>0$ such that for $c\in (-\delta, \delta)$, equation (\ref{BO3}) has a solution $(\xi_c, \nu_c)\in H^1_e(\mathbb R)\times H^1_e(\mathbb R)$, and the map $c\in (-\delta, \delta)\to (\xi_c, \nu_c)$ is smooth. In particular, $(\xi_c(x), \nu_c(x))$ converges to $(\xi_0(x), \nu_0(x))$ as $c\to 0$, uniformly for $x\in \mathbb R$, where $\nu_0$ is defined  by the unique (modulo translations) even and positive solution of 
\begin{equation}\label{BO4}
\alpha D\nu_0+ \frac{1}{\gamma} \nu_0 -\eta \nu_0^3=0,
\end{equation}
and $\xi_0=\frac{1}{1-\gamma} \frac{\epsilon}{2\gamma}\nu_0^2$. Here, $\alpha=\frac{\beta-1}{\gamma^2} \sqrt{\mu}$ and $\eta =\frac{\epsilon^2}{2\gamma^2(1-\gamma)}$.
\end{teo}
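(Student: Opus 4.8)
\emph{The plan is to} eliminate $\xi$ via the second equation of (\ref{BO3}), recast the problem as finding zeros of a single smooth map, and apply the Implicit Function Theorem at the base point $(c,\nu)=(0,\nu_0)$. First I would substitute $\xi = \frac{1}{1-\gamma}\big(c\nu + \frac{\epsilon}{2\gamma}\nu^2\big)$ into the first equation to obtain the scalar equation $\Phi(c,\nu)=0$, where
$$
\Phi(c,\nu) = \mathcal B\nu - \frac{c}{1-\gamma}\mathcal D\Big(c\nu + \frac{\epsilon}{2\gamma}\nu^2\Big) - \frac{\epsilon}{\gamma(1-\gamma)}\,\nu\Big(c\nu + \frac{\epsilon}{2\gamma}\nu^2\Big).
$$
Because $\mathcal B = \frac{1}{\gamma} + \alpha|D|$ and $\mathcal D = 1 + \frac{\beta}{\gamma}\sqrt{\mu}|D|$ lose exactly one derivative, while $H^1(\mathbb R)$ is a Banach algebra, the map $\Phi$ is smooth (indeed real-analytic) from $\mathbb R\times H^1_e(\mathbb R)$ into $L^2_e(\mathbb R)$; the even subspaces are preserved since $|D|$ and multiplication by even functions commute with the reflection $x\mapsto -x$. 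At $c=0$ the equation collapses to $\mathcal B\nu - \eta\nu^3=0$, that is $\alpha|D|\nu + \frac{1}{\gamma}\nu - \eta\nu^3=0$, which is precisely (\ref{BO4}); hence $\Phi(0,\nu_0)=0$ for the positive even ground state $\nu_0$.

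The heart of the argument is to show that the Fr\'echet derivative $D_\nu\Phi(0,\nu_0) = \mathcal L_0 := \alpha|D| + \frac{1}{\gamma} - 3\eta\nu_0^2$ is an isomorphism from $H^1_e(\mathbb R)$ onto $L^2_e(\mathbb R)$. I would split this into two steps. The principal operator $\alpha|D| + \frac{1}{\gamma}$ is an isomorphism $H^1\to L^2$ because its symbol $\alpha|y|+\frac{1}{\gamma}$ is bounded below by $\frac{1}{\gamma}>0$ and grows linearly. Writing $\mathcal L_0 = (\alpha|D|+\frac{1}{\gamma})(I-K)$ with $K = (\alpha|D|+\frac{1}{\gamma})^{-1}(3\eta\nu_0^2\,\cdot)$, the operator $K$ is compact because it is the composition of the one-derivative smoothing resolvent with multiplication by the decaying function $\nu_0^2$ (the standard fact that $f(x)g(D)$ is compact when $f$ and $g$ decay). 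Consequently $\mathcal L_0$ is Fredholm of index zero, so it suffices to establish injectivity on the even subspace.

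\emph{The hard part will be} this injectivity, which rests on the nondegeneracy of the ground state: $\ker\mathcal L_0 = \mathrm{span}\{\nu_0'\}$ on all of $H^1(\mathbb R)$. This spectral fact is the analogue for (\ref{BO4}) of the nondegeneracy known for the Benjamin-Ono solitary wave (cf. \cite{AT2}) and is the deepest input; I would either cite it or prove it via the standard positivity/Perron–Frobenius structure of the linearized operator together with the uniqueness of $\nu_0$. Granting it, since $\nu_0$ is even its derivative $\nu_0'$ is odd, so $\ker\mathcal L_0\cap H^1_e=\{0\}$, and by the Fredholm alternative $\mathcal L_0\colon H^1_e\to L^2_e$ is an isomorphism. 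The Implicit Function Theorem then yields $\delta>0$ and a smooth curve $c\in(-\delta,\delta)\mapsto\nu_c\in H^1_e(\mathbb R)$ with $\Phi(c,\nu_c)=0$ and $\nu_c|_{c=0}=\nu_0$; setting $\xi_c = \frac{1}{1-\gamma}\big(c\nu_c + \frac{\epsilon}{2\gamma}\nu_c^2\big)$ recovers the even solution of (\ref{BO3}), smooth in $c$. Finally $\nu_c\to\nu_0$ and $\xi_c\to\xi_0$ in $H^1(\mathbb R)$ as $c\to 0$, and this upgrades to uniform convergence in $x$ through the embedding $H^1(\mathbb R)\hookrightarrow C_0(\mathbb R)$, completing the proof.
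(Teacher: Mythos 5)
Your proposal is correct and follows essentially the same route as the paper: the paper also applies the Implicit Function Theorem on the even subspace, reduces the kernel computation to the scalar linearized operator $\mathcal M_0=\alpha D+\frac{1}{\gamma}-3\eta\nu_0^2$, and invokes the Frank--Lenzmann nondegeneracy $\ker\mathcal M_0=\mathrm{span}\{\nu_0'\}$ together with the oddness of $\nu_0'$ (the only differences being that the paper linearizes the $2\times2$ system $G(c,\xi,\nu)$ rather than first eliminating $\xi$, and that the correct reference for both the uniqueness of $\nu_0$ and the nondegeneracy of the cubic fractional ground state equation \eqref{BO4} is \cite{FL}, not the quadratic Benjamin--Ono result of \cite{AT2}). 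Your explicit Fredholm-index-zero step justifying surjectivity of $\mathcal L_0\colon H^1_e\to L^2_e$ is a welcome addition, as the paper passes from trivial kernel on $X_e$ to invertibility without comment.
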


\begin{proof} Let $X_e=H^1_e(\mathbb R)\times H^1_e(\mathbb R)$ and define a map $G:\mathbb R\times X_e\to L^2_e(\mathbb R)\times L^2_e(\mathbb R)$ by
$$
G(c, \xi, \nu)= (-c\mathcal D \xi +\mathcal B\nu-\frac{\epsilon}{\gamma} \xi\nu, -c\nu +(1-\gamma)\xi- \frac{\epsilon}{2\gamma}\nu^2).
$$
Then, from \cite{FL}, we obtain that there is a unique  even  solution $(\xi_0, \nu_0)$ of  equation $G(0, \xi_0, \nu_0)=0$, with $\nu_0$ satisfying (\ref{BO4}) and $\xi_0, \nu_0$ positive. Next, a calculation shows that the Fr\'echet derivative $G_{(\xi,\nu)}= \partial G(c, \xi, \nu)/{\partial (\xi, \nu)}$ exists on $\mathbb R\times X_e$ and is defined as a map from $\mathbb R\times X_e$ to $B(X_e; L^2_e(\mathbb R)\times L^2_e(\mathbb R))$ by
$$
G_{(\xi,\nu)}(c, \xi,\nu)=\left(\begin{array}{cc}
-c\mathcal D- \frac{\epsilon}{\gamma}\nu & \mathcal B-\frac{\epsilon}{\gamma}\xi \\
1-\gamma & -\frac{\epsilon}{\gamma}\nu-c
\end{array}\right)
$$
Moreover the linear operator $\mathcal T_0=G_{(\xi,\nu)}(0, \xi_0,\nu_0)$ with domain $D(\mathcal  T_0)= H^1(\mathbb R)\times H^1(\mathbb R)$ has a one-dimensional kernel, $Ker(\mathcal T_0)$, generated by $( \xi'_0,\nu'_0)^t$. Indeed, by considering $c=0$ in (\ref{BO3}) we have that $( \xi'_0,\nu'_0)$ satisfies
$$
\mathcal Bv'_0=\frac{\epsilon}{\gamma}(\xi_0'\nu_0+\xi_0\nu_0'),\;\; \text{and},\;\;\xi_0'=\frac{1}{1-\gamma} \frac{\epsilon}{\gamma}\nu_0\nu_0',
$$
and hence $( \xi'_0,\nu'_0)^t\in Ker(\mathcal T_0)$.

 Next, suppose $(\phi, \psi)^t\in Ker(\mathcal T_0)$. Then, since $\phi$ satisfies
\begin{equation}\label{BO5}
\phi=\frac{\epsilon}{\gamma(1-\gamma)} \nu_0\psi, 
\end{equation}
we deduce that $\psi$ belongs to the kernel of the linear operator 
$$
\mathcal M_0= \alpha D+ \frac{1}{\gamma}  -3\eta \nu_0^2.
$$
Thus, from \cite{FL} we have that $Ker(\mathcal M_0)=[\nu_0']$ and the number of negative eigenvalues of $\mathcal M_0$ is exactly one. Then, $\psi=\theta \nu'_0$ and from (\ref{BO5}) $\phi=\theta \xi_0$.

Now, since  $( \xi'_0,\nu'_0)\notin X_e$ we obtain that $\mathcal T_0:X_e\to L^2_e(\mathbb R)\times L^2_e(\mathbb R)$ is invertible. Moreover, since $G$ and $G_{(\xi,\nu)}$ are smooth maps on their domains, we have from the Implicit Function Theorem that there exist a number $\delta>0$ and a smooth map $c\in (-\delta, \delta)\to (\xi_c, \nu_c)\in X_e$ such that $G(c, \xi_c, \nu_c)=0$ for all $c\in (-\delta, \delta)$. This proves the theorem.

\end{proof}

Theorem \ref{exisBO} shows the existence of a smooth curve  of solutions for (\ref{BO3}) bifurcating from the ``positive'' profile  $(\xi_0, \nu_0)$ with $\nu_0$ being the positive solution for (\ref{BO4}). Since $-\nu_0$ is the negative solution associated to (\ref{BO4}), a similar analysis to that used in the proof of Theorem \ref{exisBO} shows the following existence result of solitary waves solutions for (\ref{BO3}).

\begin{teo}\label{exisBO2}
Let $\gamma\in (0,1)$. Then there exists $\delta_1>0$ such that for $c\in (-\delta_1, \delta_1)$, equation (\ref{BO3}) has a solution $(\xi_{c, 1}, \nu_{c,1})\in H^1_e(\mathbb R)\times H^1_e(\mathbb R)$, and the correspondence $c\in (-\delta_1, \delta_1)\to (\xi_{c, 1}, \nu_{c,1})$ define a smooth map. In particular, for $c\to 0$, $(\xi_{c,1}(x), \nu_{c, 1}(x))$ converges to $(\xi_0(x), -\nu_0(x))$, uniformly for $x\in \mathbb R$, where $\nu_0$ is  the unique (modulo translations) even and positive solution of (\ref{BO4})
and 
$$
\xi_0=\frac{1}{1-\gamma} \frac{\epsilon}{2\gamma}\nu_0^2.
$$ 
Moreover, from Theorem \ref{exisBO2}  we obtain that $\delta_1=\delta$ and 
$$
 (\xi_{c, 1}, \nu_{c,1})= (\xi_{c}, -\nu_{c}), \quad \text{for}\;\;c\in (-\delta, \delta).
$$
\end{teo}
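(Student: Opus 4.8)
The plan is to repeat, essentially verbatim, the implicit function theorem scheme of Theorem \ref{exisBO}, but now bifurcating from the ``negative'' profile $(\xi_0,-\nu_0)$ rather than from $(\xi_0,\nu_0)$. First I would check that $(\xi_0,-\nu_0)$ is a zero of $G(0,\cdot,\cdot)$: in the first component (at $c=0$) the map $\nu\mapsto\mathcal B\nu-\frac{\epsilon}{\gamma}\xi_0\nu$ is odd in $\nu$, so it vanishes at $-\nu_0$ because it vanishes at $\nu_0$; in the second component $\nu$ enters only through $\nu^2$, so $(1-\gamma)\xi_0-\frac{\epsilon}{2\gamma}(-\nu_0)^2=(1-\gamma)\xi_0-\frac{\epsilon}{2\gamma}\nu_0^2=0$. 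Hence $G(0,\xi_0,-\nu_0)=0$.

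Next I would compute the linearization $\mathcal T_0'=G_{(\xi,\nu)}(0,\xi_0,-\nu_0)$, which is obtained from the displayed matrix for $G_{(\xi,\nu)}$ by flipping the sign of the two $\nu_0$ entries. The crucial point is the kernel computation. Writing $(\phi,\psi)^t\in\mathrm{Ker}(\mathcal T_0')$, the second row gives the algebraic relation $\phi=-\frac{\epsilon}{\gamma(1-\gamma)}\nu_0\psi$ (the analogue of (\ref{BO5}) with the opposite sign), and substituting into the first row collapses, after using $\xi_0=\frac{1}{1-\gamma}\frac{\epsilon}{2\gamma}\nu_0^2$, to exactly the same scalar equation $\mathcal M_0\psi=0$ as in Theorem \ref{exisBO}. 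This is the key observation: since $\nu_0$ enters the reduced operator $\mathcal M_0=\alpha D+\frac1\gamma-3\eta\nu_0^2$ only through $\nu_0^2$, the sign change $\nu_0\mapsto-\nu_0$ leaves $\mathcal M_0$ untouched. Invoking \cite{FL} again, $\mathrm{Ker}(\mathcal M_0)=[\nu_0']$ and $\mathcal M_0$ has exactly one negative eigenvalue, so $\mathrm{Ker}(\mathcal T_0')$ is one-dimensional, generated by $(\xi_0',-\nu_0')^t$. As $\xi_0',\nu_0'$ are odd, this generator lies outside $X_e$, whence $\mathcal T_0':X_e\to L^2_e(\mathbb R)\times L^2_e(\mathbb R)$ is invertible (a Fredholm index-zero argument, the cokernel being spanned by an odd function as well). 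The implicit function theorem then produces a smooth branch $c\mapsto(\xi_{c,1},\nu_{c,1})\in X_e$ on some interval $(-\delta_1,\delta_1)$ with $(\xi_{c,1},\nu_{c,1})\to(\xi_0,-\nu_0)$ as $c\to0$.

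For the final identification I would exploit a sign symmetry of system (\ref{BO3}). A direct inspection of $G$ shows that $(\xi,\nu)$ solves $G(c,\cdot)=0$ if and only if $(\xi,-\nu)$ solves $G(-c,\cdot)=0$: under $\nu\mapsto-\nu$ the first component is odd and the second even, once one simultaneously reverses $c$. Applying this involution to the branch of Theorem \ref{exisBO} produces a branch of even solutions through $(\xi_0,-\nu_0)$; by the local uniqueness furnished by the implicit function theorem it must coincide with $(\xi_{c,1},\nu_{c,1})$, which both forces $\delta_1=\delta$ and yields the relation $(\xi_{c,1},\nu_{c,1})=(\xi_c,-\nu_c)$ after matching the two parametrizations through the speed reversal $c\mapsto-c$.

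I expect the main obstacle to be essentially bookkeeping: correctly tracking how the involution $\nu\mapsto-\nu$ acts on the speed parameter $c$ so that the two smooth curves are matched on the same interval, and confirming in detail that the kernel computation is genuinely identical to the positive case. The analytic content, namely the Fredholmness of $\mathcal T_0'$ and the spectral facts about $\mathcal M_0$, is inherited unchanged from Theorem \ref{exisBO} and \cite{FL}, so no new estimates are required.
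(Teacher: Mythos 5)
Your proposal is correct and follows essentially the same route as the paper, which proves Theorem \ref{exisBO2} only by invoking ``a similar analysis'' to Theorem \ref{exisBO}; your observation that the kernel computation at $(\xi_0,-\nu_0)$ collapses to the very same scalar operator $\mathcal M_0$ (because $\nu_0$ enters only through $\nu_0^2$) is precisely the point that makes that analysis go through. Your care with the involution at the end is warranted: since $(\xi,\nu)$ solves \eqref{BO3} at speed $c$ if and only if $(\xi,-\nu)$ solves it at speed $-c$, local uniqueness from the implicit function theorem actually yields $(\xi_{c,1},\nu_{c,1})=(\xi_{-c},-\nu_{-c})$ with $\delta_1=\delta$, so the identity displayed in the statement must indeed be read with the speed reversal you describe.
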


We note that Theorems \ref{exisBO} and \ref{exisBO2} show the bidirectional nature of the system (\ref{BO1}).

\section{Existence of solitary waves solutions for ILW systems}

In this section we show the existence of even solitary waves solutions for the Intermediate Long Wave systems (henceforth ILW-systems)

Solitary waves solutions for (\ref{W1}), that is of the form
$$
\zeta(x,t)=\xi(x-ct),\quad v(x,t)=\nu (x-ct),\qquad c\in \mathbb R
$$
and $(\xi, \nu)$ vanish at infinity, it will satisfy the systems
\begin{equation}\label{W3}
\left\{\begin{array}{lll}
-c\mathcal W \xi +\mathcal Z\nu =\frac{\epsilon}{\gamma} \xi\nu,\\
\xi=\frac{1}{1-\gamma} \Big (c \nu + \frac{\epsilon}{2\gamma}\nu^2\Big).
\end{array}\right.
\end{equation}
Our approach of the existence of a smooth curve $c\in (-\delta, \delta)\to (\xi_c, \nu_c)$ of solutions for (\ref{BO3}) will be based again in the Implicit Function Theorem. First, we prove  that for $c=0$, system (\ref{W3}) has a smooth curve $\mu_2\to (\xi_{0, \mu_2},\nu_{0, \mu_2})\in H^1_e(\mathbb R)\times H^1_e(\mathbb R)$ with $\mu_2$ sufficiently large.

\begin{teo}\label{exisW}
Let $\gamma\in (0,1)$ and $c=0$ in (\ref{W3}). Then there exists $\sigma>0$ sufficiently large
such that for $\mu_2\in (\sigma, +\infty)$, equation (\ref{W3}) has a solution $(\xi_{0, \mu_2}, \nu_{0, \mu_2})\in H^1_e(\mathbb R)\times H^1_e(\mathbb R)$, and the correspondence $\mu_2\in (\sigma, +\infty)\to (\xi_{0, \mu_2}, \nu_{0, \mu_2})$ define a smooth map. In particular, for $\mu_2\to +\infty$, $(\xi_{0, \mu_2}(x), \nu_{0, \mu_2}(x))$ converges to $(\xi_0(x), \nu_0(x))$, uniformly for $x\in \mathbb R$, where $\xi_0, \nu_0$ are defined in Theorem \ref{exisBO}.

\end{teo}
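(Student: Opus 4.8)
The plan is to solve, at $c=0$, the scalar equation for $\nu$ obtained by eliminating $\xi$ from \eqref{W3}: the second equation gives $\xi=\frac{1}{1-\gamma}\frac{\epsilon}{2\gamma}\nu^2$, and the first becomes $\mathcal Z\nu=\frac{\epsilon}{\gamma}\xi\nu$, so one is left with
\[
\mathcal Z_{\mu_2}\nu=\eta\nu^3,\qquad \eta=\frac{\epsilon^2}{2\gamma^2(1-\gamma)},
\]
where I write $\mathcal Z_{\mu_2}=\frac1\gamma+\alpha|D|\coth(\sqrt{\mu_2}|D|)$ with $\alpha=\frac{\beta-1}{\gamma^2}\sqrt\mu$. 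Since the multiplier $|D|\coth(\sqrt{\mu_2}|D|)$ has an even symbol it preserves parity, so the problem can be posed in $H^1_e(\mathbb R)$, and $\xi_{0,\mu_2}=\frac{1}{1-\gamma}\frac{\epsilon}{2\gamma}\nu_{0,\mu_2}^2$ will be recovered at the end ($H^1$ being a Banach algebra in one dimension). Letting $\mu_2\to+\infty$ replaces $\coth(\sqrt{\mu_2}|D|)$ by $1$, so the limiting equation is $\mathcal B\nu=\eta\nu^3$, i.e.\ \eqref{BO4}, whose unique even positive solution $\nu_0$ and linearization are furnished by \cite{FL} and recalled in the proof of Theorem \ref{exisBO}.

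Two structural facts drive the argument. First, \eqref{coth} gives $|y|\le|y|\coth(\sqrt{\mu_2}|y|)\le \frac{1}{\sqrt{\mu_2}}+|y|$, so the Fourier multiplier $R_{\mu_2}:=\mathcal Z_{\mu_2}-\mathcal B=\alpha|D|(\coth(\sqrt{\mu_2}|D|)-1)$ has symbol in $[0,\alpha/\sqrt{\mu_2}]$ and therefore
\[
\|R_{\mu_2}\|_{B(L^2,L^2)}\le \frac{\alpha}{\sqrt{\mu_2}}\longrightarrow 0\quad(\mu_2\to\infty).
\]
Second, the linearization of the limiting equation at $\nu_0$ is exactly $\mathcal M_0=\alpha|D|+\frac1\gamma-3\eta\nu_0^2$, which by \cite{FL} (see the proof of Theorem \ref{exisBO}) has kernel $[\nu_0']$ and a single negative eigenvalue. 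Since $\alpha|D|$, the constant $\frac1\gamma$, and multiplication by the even decaying function $3\eta\nu_0^2$ all commute with parity, $\mathcal M_0$ respects the splitting $L^2=L^2_e\oplus L^2_o$; it is self-adjoint and Fredholm of index zero from $H^1$ to $L^2$, and because $\nu_0'$ is odd its kernel on the even subspace is trivial. Hence $\mathcal M_0\colon H^1_e(\mathbb R)\to L^2_e(\mathbb R)$ is an isomorphism, with bounded inverse.

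For existence and the limit $\mu_2\to\infty$ I would write $\nu=\nu_0+h$ and, using $\mathcal B\nu_0=\eta\nu_0^3$, recast $\mathcal Z_{\mu_2}\nu-\eta\nu^3=0$ as the fixed-point problem on $H^1_e$
\[
h=-\mathcal M_0^{-1}\big[R_{\mu_2}\nu_0+R_{\mu_2}h-3\eta\nu_0 h^2-\eta h^3\big]=:\mathcal F_{\mu_2}(h).
\]
By the boundedness of $\mathcal M_0^{-1}\colon L^2_e\to H^1_e$, the decay of $\|R_{\mu_2}\|$, and the algebra estimates $\|h^2\|_1,\|h^3\|_1\lesssim\|h\|_1^2,\|h\|_1^3$, the map $\mathcal F_{\mu_2}$ is a contraction of a small $H^1_e$-ball for every $\mu_2>\sigma$ with $\sigma$ large; its source term $R_{\mu_2}\nu_0$ is $O(\|R_{\mu_2}\|)$, so the unique fixed point $h_{\mu_2}$ satisfies $\|\nu_{0,\mu_2}-\nu_0\|_1=\|h_{\mu_2}\|_1\lesssim \alpha/\sqrt{\mu_2}\to0$. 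The embedding $H^1\hookrightarrow L^\infty$ then yields the uniform (in $x$) convergence of $\nu_{0,\mu_2}$, and hence of $\xi_{0,\mu_2}$, to $(\xi_0,\nu_0)$.

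It remains to upgrade this to a \emph{smooth} map on $(\sigma,+\infty)$, and here I would invoke the implicit function theorem at each \emph{finite} $\mu_2^\ast\in(\sigma,\infty)$ applied to $\Gamma(\mu_2,\nu)=\mathcal Z_{\mu_2}\nu-\eta\nu^3$. For finite $\mu_2$ the map $\mu_2\mapsto\mathcal Z_{\mu_2}\in B(H^1,L^2)$ is smooth, its $\mu_2$-derivative having symbol $-\tfrac{\alpha}{2\sqrt{\mu_2}}|y|^2(\coth^2(\sqrt{\mu_2}|y|)-1)$, which is bounded because $(\coth^2(\sqrt{\mu_2}|y|)-1)|y|^2$ is a bounded symbol (as already used in the $N_7$ estimate). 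Moreover $\partial_\nu\Gamma(\mu_2^\ast,\nu_{0,\mu_2^\ast})=\mathcal Z_{\mu_2^\ast}-3\eta\nu_{0,\mu_2^\ast}^2\to\mathcal M_0$ in $B(H^1,L^2)$, so for $\sigma$ large it is an isomorphism of $H^1_e$ onto $L^2_e$ (invertibility being an open condition). The IFT then produces a smooth local branch which, by the uniqueness from the contraction, coincides with $\mu_2\mapsto\nu_{0,\mu_2}$; patching these branches gives smoothness on all of $(\sigma,\infty)$. The one genuine obstacle is that $\mathcal Z_{\mu_2}$ is \emph{not} operator-norm differentiable at $\mu_2=\infty$: the difference quotient $\sqrt{\mu_2}\,(\mathcal Z_{\mu_2}-\mathcal B)$ has symbol $\alpha\,w(\coth w-1)$ evaluated at $w=\sqrt{\mu_2}\,|y|$, which concentrates in a boundary layer near $y=0$ while retaining a fixed height, so it admits no operator-norm limit. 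This is precisely why existence and convergence must be secured by the quantitative contraction above, whereas smoothness is claimed only on the open interval of finite $\mu_2$ — exactly the form of the statement.
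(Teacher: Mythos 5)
Your proposal is correct, and it reaches the conclusion by a genuinely different route from the paper. The paper keeps the two-component system and introduces the rescaled parameter $a=1/\sqrt{\mu_2}$, observing via the inequality \eqref{inequa} that the operator family $a\mapsto |D|\coth(|D|/|a|)$ extends continuously (in $B(H^1,L^2)$) to $a=0$ with value $|D|$; it then applies the implicit function theorem once, at $(a,\xi,\nu)=(0,\xi_0,\nu_0)$, using the invertibility on $X_e$ of the linearization $\mathcal T_0$ coming from Theorem \ref{exisBO}, and obtains in a single stroke a branch that is continuous on $(-\sigma_1,\sigma_1)$ and smooth on $(0,\sigma_1)$, i.e.\ on $\mu_2\in(\sigma,\infty)$. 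You instead reduce to the scalar equation $\mathcal Z_{\mu_2}\nu=\eta\nu^3$, prove existence near the Benjamin--Ono ground state by a quantitative contraction built on $\|\mathcal Z_{\mu_2}-\mathcal B\|_{B(L^2)}\leqq \alpha/\sqrt{\mu_2}$ and on the invertibility of $\mathcal M_0$ on the even subspace, and then recover smoothness in $\mu_2$ by applying the implicit function theorem only at finite values of $\mu_2$, where the multiplier family is genuinely operator-norm smooth. Both arguments rest on the same Frank--Lenzmann nondegeneracy of $\nu_0$ and on the parity argument that kills the kernel direction $(\xi_0',\nu_0')$. What the paper's rescaling buys is economy: one application of the (continuous-parameter) implicit function theorem covers existence, convergence as $\mu_2\to+\infty$, and smoothness away from the limit. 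What your contraction buys is an explicit rate $\|\nu_{0,\mu_2}-\nu_0\|_1=O(\mu_2^{-1/2})$ and a clean separation of the two issues, which is welcome precisely because (as you correctly point out, and as is implicit in the paper's restriction of smoothness to $a\in(0,\sigma_1)$) the family is not operator-norm differentiable at the endpoint. One small point worth making explicit if you write this up: the invertibility of $\mathcal M_0:H^1_e\to L^2_e$ with bounded inverse follows since $\alpha|D|+\frac1\gamma$ is an isomorphism $H^1\to L^2$ and multiplication by the decaying function $3\eta\nu_0^2$ is a compact perturbation, so $\mathcal M_0$ is Fredholm of index zero and its kernel $[\nu_0']$ is odd; you assert this correctly but it is the load-bearing step of the contraction.
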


\begin{proof} Let $X_e=H^1_e(\mathbb R)\times H^1_e(\mathbb R)$ and define a map $H:\mathbb R\times X_e\to L^2_e(\mathbb R)\times L^2_e(\mathbb R)$ by
$$
P(a, \xi, \nu)= (\frac1\gamma \nu -\frac{\epsilon}{\gamma} \xi\nu +\alpha|a|\tau_{|a|}[|D|coth(|D|)\tau_{\frac{1}{|a|}}\nu],  (1-\gamma)\xi- \frac{\epsilon}{2\gamma}\nu^2),
$$
where $\tau_d$ is the linear dilation operator, $\tau_df(x)\equiv f(dx)$. For $d=0$, $\tau_{\frac1d}f\equiv 0$. We note from the relation, 
\begin{equation}\label{inequa}
0\leqq xcoth\Big(\frac{1}{d}x\Big)-x\leqq d,\;\;\qquad{for\; all}\;\;x\geqq 0\;\;\text{and}\;\; d\geqq 0
\end{equation}
that $|D|coth(|D|)v \in L^2(\mathbb R)$ for $v\in H^1(\mathbb R)$ and
$$
|a|\tau_{|a|}[|D|coth(|D|)\tau_{\frac{1}{|a|}}\nu]\to |D|\nu,\quad\text{as}\;\;a\to 0
$$
in $L^2(\mathbb R)$, because of the relation
\begin{equation}\label{W4}
|a|\tau_{|a|}[|D|coth(|D|)\tau_{\frac{1}{|a|}}\nu]=|D|coth\Big(\frac{1}{|a|}|D|\Big)\nu
\end{equation}
Then from \cite{FL}, we obtain that there is a unique  even  solution $(\xi_0, \nu_0)$ of  equation 
$$
(0,0)=P(0, \xi_0, \nu_0)=(\frac1\gamma \nu_0 -\frac{\epsilon}{\gamma} \xi_0\nu_0 +\alpha |D|\nu_0,  (1-\gamma)\xi_0- \frac{\epsilon}{2\gamma}\nu_0^2),
$$ 
with $\nu_0$ satisfying (\ref{BO4}) and $\xi_0, \nu_0$ positive. 

Next, we calculate the Fr\'echet derivative $\mathcal T(a,\xi, \nu)=\partial P(a, \xi, \nu)/{\partial (\xi, \nu)}$ which is a map from $\mathbb R\times X_e$ to $B(X_e; L^2_e(\mathbb R)\times L^2_e(\mathbb R))$
$$
\mathcal T(a, \xi, \nu)=\left(\begin{array}{cc}
- \frac{\epsilon}{\gamma}\nu & \frac{1}{\gamma}-\frac{\epsilon}{\gamma}\xi +\alpha |a| \tau_{|a|}[|D|coth(|D|)\tau_{\frac{1}{|a|}}] \\
1-\gamma & -\frac{\epsilon}{\gamma}\nu
\end{array}\right)
$$
and so the linear operator $\mathcal T_0=\mathcal T(0, \xi_0,\nu_0)$ defined with domain $H_e^1(\mathbb R)\times H_e^1(\mathbb R)$ has a trivial kernel (see proof of Theorem \ref{exisBO2}). Therefore,  $\mathcal T_0:X_e\to L^2_e(\mathbb R)\times L^2_e(\mathbb R)$ is invertible. Moreover, since $P$ and $\mathcal T$ are continuous maps on their domains, we have from the Implicit Function Theorem that there exist a number $\sigma_1>0$ and a continuous map $a\in (-\sigma_1, \sigma_1)\to (\xi_{0, a}, \nu_{0,a})\in X_e$ such that $P(a, \xi_{0, a}, \nu_{0, a})=0$ for all $a\in (-\sigma_1, \sigma_1)$. We note that since $P$ and $\mathcal T$ are smooth maps on the domain $(0, +\infty)$, then $a\in (0,\sigma_1)\to (\xi_{0, a}, \nu_{0,a})\in X_e$ is also a smooth map.

Now, from relation (\ref{W4}) we obtain for $a=\frac{1}{\sqrt{\mu_2}}$ and $\mu_2>\sigma$ that $(\xi_{0, a}, \nu_{0,a})\equiv (\xi_{0, \mu_2}, \nu_{0,\mu_2})$ satisfies (\ref{W3}) with $c=0$. This shows the theorem.

\end{proof}

Next, we show the existence of a smooth curve of solution for (\ref{W3}) depending of the velocity $c$.

\begin{teo}\label{exisW1}
Let $\gamma\in (0,1)$ and $\mu_2$ sufficiently large. Then there exists $\eta>0$ such that for $c\in (-\eta, \eta)$, equation (\ref{W3}) has a solution $(\xi_{c, \mu_2}, \nu_{c, \mu_2})\in H^1_e(\mathbb R)\times H^1_e(\mathbb R)$, and the correspondence $c\in (-\eta, \eta)\to (\xi_{c, \mu_2}\nu_{c, \mu_2})$ define a smooth map. In particular, for $c\to 0$, $(\xi_{c, \mu_2}(x), \nu_{c, \mu_2}(x))$ converges to $(\xi_{0, \mu_2}(x), \nu_{0, \mu_2}(x))$, uniformly for $x\in \mathbb R$, where $(\xi_{0,\mu_2},\nu_{0,\mu_2})$ is  the solution of (\ref{W3}) with $c=0$ and defined by Theorem \ref{exisW}. 
\end{teo}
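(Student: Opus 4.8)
The plan is to run the same Implicit Function Theorem scheme used in Theorems \ref{exisBO} and \ref{exisW}, but now taking the speed $c$ as the bifurcation parameter while keeping $\mu_2$ fixed and large. First I would set $X_e=H^1_e(\mathbb R)\times H^1_e(\mathbb R)$ and define $G:\mathbb R\times X_e\to L^2_e(\mathbb R)\times L^2_e(\mathbb R)$ by
\[
G(c,\xi,\nu)=\Big(-c\mathcal W\xi+\mathcal Z\nu-\tfrac{\epsilon}{\gamma}\xi\nu,\ (1-\gamma)\xi-c\nu-\tfrac{\epsilon}{2\gamma}\nu^2\Big),
\]
so that the zeros of $G$ are exactly the solutions of (\ref{W3}). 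Because $\mathcal W,\mathcal Z$ are fixed self-adjoint operators bounded from $H^1(\mathbb R)$ to $L^2(\mathbb R)$ and preserve evenness, and the nonlinearity is quadratic, the maps $G$ and $G_{(\xi,\nu)}$ are smooth on their domains. By Theorem \ref{exisW} the base point satisfies $G(0,\xi_{0,\mu_2},\nu_{0,\mu_2})=0$ for every $\mu_2>\sigma$, which is where the hypothesis that $\mu_2$ be large first enters.

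Next I would compute the Fréchet derivative and evaluate it at the base point, obtaining
\[
\mathcal T_0=G_{(\xi,\nu)}(0,\xi_{0,\mu_2},\nu_{0,\mu_2})
=\begin{pmatrix} -\frac{\epsilon}{\gamma}\nu_{0,\mu_2} & \mathcal Z-\frac{\epsilon}{\gamma}\xi_{0,\mu_2}\\[2pt] 1-\gamma & -\frac{\epsilon}{\gamma}\nu_{0,\mu_2}\end{pmatrix},
\]
with domain $X_e$. The heart of the matter is to prove that $\mathcal T_0$ is invertible, and as in the earlier proofs this reduces to a scalar kernel computation. For $(\phi,\psi)^t\in\mathrm{Ker}(\mathcal T_0)$ the second row forces $\phi=\frac{\epsilon}{\gamma(1-\gamma)}\nu_{0,\mu_2}\psi$; substituting this into the first row and using $\xi_{0,\mu_2}=\frac{1}{1-\gamma}\frac{\epsilon}{2\gamma}\nu_{0,\mu_2}^2$ collapses the system to $\mathcal M_{0,\mu_2}\psi=0$, where
\[
\mathcal M_{0,\mu_2}=\mathcal Z-\frac{3\epsilon^2}{2\gamma^2(1-\gamma)}\nu_{0,\mu_2}^2
\]
is precisely the operator obtained by linearizing the scalar ILW ground-state equation $\mathcal Z\nu_{0,\mu_2}=\frac{\epsilon^2}{2\gamma^2(1-\gamma)}\nu_{0,\mu_2}^3$ about $\nu_{0,\mu_2}$. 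Back-substitution then shows that invertibility of $\mathcal T_0$ follows, exactly as in Theorem \ref{exisBO}, once $\mathcal M_{0,\mu_2}$ is invertible on the even subspace.

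The main obstacle is therefore the spectral non-degeneracy of $\mathcal M_{0,\mu_2}$ among even functions, and this is again where ``$\mu_2$ sufficiently large'' is used. I would argue by perturbation from the Benjamin-Ono limit: using the inequality (\ref{inequa}) one has $\|\mathcal Z-\mathcal B\|_{H^1\to L^2}=O(\mu_2^{-1/2})$, and by Theorem \ref{exisW} $\nu_{0,\mu_2}\to\nu_0$ uniformly, so $\mathcal M_{0,\mu_2}\to\mathcal M_0=\mathcal B-\frac{3\epsilon^2}{2\gamma^2(1-\gamma)}\nu_0^2$ in the $B(H^1_e,L^2_e)$ norm as $\mu_2\to\infty$. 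By \cite{FL} the kernel of $\mathcal M_0$ is spanned by the odd function $\nu_0'$, and since $\mathcal B:H^1\to L^2$ is an isomorphism perturbed by the relatively compact multiplication by $\nu_0^2$, the operator $\mathcal M_0$ is Fredholm of index zero and hence invertible on $H^1_e(\mathbb R)$; a small norm perturbation preserves invertibility, so $\mathcal M_{0,\mu_2}:H^1_e\to L^2_e$ is invertible for $\mu_2$ large. (Alternatively one could invoke directly the known non-degeneracy of the ILW soliton from \cite{AT,AB,ABH}.) This yields the triviality of $\mathrm{Ker}(\mathcal T_0)$ on $X_e$ and the invertibility of $\mathcal T_0$. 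The Implicit Function Theorem then furnishes $\eta>0$ and a smooth curve $c\in(-\eta,\eta)\to(\xi_{c,\mu_2},\nu_{c,\mu_2})\in X_e$ of zeros of $G$ through the base point, and the asserted uniform convergence as $c\to0$ follows from the continuity of this curve in $H^1(\mathbb R)\times H^1(\mathbb R)$ together with the embedding $H^1(\mathbb R)\hookrightarrow L^\infty(\mathbb R)$.
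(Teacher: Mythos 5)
Your proposal is correct and follows the same overall scheme as the paper: fix $\mu_2$ large, apply the Implicit Function Theorem in the speed $c$ at the base point $(\xi_{0,\mu_2},\nu_{0,\mu_2})$ supplied by Theorem \ref{exisW}, and obtain invertibility of the linearization at $c=0$ by perturbation from the Benjamin--Ono limit $\mu_2\to+\infty$, using the two ingredients the paper also uses, namely the bound \eqref{inequa} giving $\|\mathcal Z-\mathcal B\|=O(\mu_2^{-1/2})$ and the convergence $(\xi_{0,\mu_2},\nu_{0,\mu_2})\to(\xi_0,\nu_0)$, together with the non-degeneracy result of \cite{FL}. Where you genuinely diverge is in how the invertibility of the linearized operator is established. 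The paper works directly with the full $2\times 2$ matrix operator $\mathcal Z_{\mu_2}=\mathcal Z(0,\xi_{0,\mu_2},\nu_{0,\mu_2})$, shows it converges to the BO linearization $\mathcal T_0$ in Kato's gap metric for closed operators (via the estimates \eqref{gap2} and Theorem \ref{A} of Appendix B), and then invokes the continuity of isolated eigenvalues under generalized convergence to conclude that zero remains a simple eigenvalue with the odd eigenfunction $(\xi'_{0,\mu_2},\nu'_{0,\mu_2})$, hence is absent on $X_e$. You instead first eliminate $\phi$ from the kernel equation to reduce everything to the scalar operator $\mathcal M_{0,\mu_2}=\mathcal Z-3\eta\,\nu_{0,\mu_2}^2$ (the linearization of the scalar ILW ground-state equation), and then perturb in the ordinary operator norm of $B(H^1_e,L^2_e)$ from $\mathcal M_0=\mathcal B-3\eta\,\nu_0^2$, whose invertibility on the even subspace follows from Fredholmness plus the fact that its kernel $[\nu_0']$ is odd. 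Your route is more elementary (no gap metric or spectral continuity for closed operators is needed, only that invertible bounded operators form an open set), and it makes the role of the scalar non-degeneracy of \cite{FL} more transparent; the paper's route has the advantage of giving slightly more information (simplicity and persistence of the zero eigenvalue of the full matrix operator on all of $H^1\times H^1$, not just invertibility on $X_e$). Note that back-substitution for surjectivity, $\phi=\frac{1}{1-\gamma}\bigl(g+\frac{\epsilon}{\gamma}\nu_{0,\mu_2}\psi\bigr)$, only returns $\phi\in L^2_e$ when the datum $g$ is merely in $L^2_e$, so strictly speaking the natural codomain for the second component is $H^1_e$ rather than $L^2_e$; this is a cosmetic adjustment that the paper's own formulation also requires, and it does not affect the substance of either argument.
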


\begin{proof} Let $X_e=H^1_e(\mathbb R)\times H^1_e(\mathbb R)$ and define a map $H:\mathbb R\times X_e\to L^2_e(\mathbb R)\times L^2_e(\mathbb R)$ by
$$
Q(c, \xi, \nu)= (-c(1+g(D)) \xi+\frac1\gamma \nu -\frac{\epsilon}{\gamma} \xi\nu +\alpha|D|coth(\sqrt{\mu_2}|D|)\nu,  (1-\gamma)\xi- c\nu-\frac{\epsilon}{2\gamma}\nu^2).
$$
From Theorem \ref{exisW} we obtain $Q(0,  \xi_{0,\mu_2}, \nu_{0,\mu_2})=0$. Next, the Fr\'echet derivative $\mathcal Z(c,\xi, \nu)=\partial Q(c, \xi, \nu)/{\partial (\xi, \nu)}$ which is a map from $\mathbb R\times X_e$ to $B(X_e; L^2_e(\mathbb R)\times L^2_e(\mathbb R))$ is given by
$$
\mathcal Z(c, \xi, \nu)=\left(\begin{array}{cc}
-c(1+g(D))- \frac{\epsilon}{\gamma}\nu & \frac{1}{\gamma}-\frac{\epsilon}{\gamma}\xi +\alpha |D|coth(\sqrt{\mu_2}|D|)\\
1-\gamma & -c -\frac{\epsilon}{\gamma}\nu
\end{array}\right)
$$

Next, we will see that $\mathcal Z_{\mu_2}\equiv\mathcal Z(0, \xi_{0,\mu_2}, \nu_{0,\mu_2})$ has a trivial kernel on $X_e$ for $\mu_2$ sufficiently large. Indeed, initially we have that on $H^1(\mathbb R)\times H^1(\mathbb R)$, $( \xi'_{0,\mu_2}, \nu'_{0,\mu_2})\in Ker (\mathcal Z_{\mu_2})$ for all $\mu_2$. Now, we show that $\mathcal Z_{\mu_2}$ converges to $\mathcal T_0$, as $\mu_2\to+\infty$,  in the topology of generalized convergence on the set $\mathcal C$ of all closed operators on $L^2(\mathbb R)\times L^2(\mathbb R)$, namely, for $\widehat{\delta}(S, T)$ denoting a {\it metric gap} between the closed operators $S$ and $T$ with $D(S), D(T)\subset L^2(\mathbb R)\times L^2(\mathbb R)$ (see the Appendix), we have
\begin{equation}\label{gap1}
\lim_{\mu_2\to +\infty} \widehat{\delta}(\mathcal Z_{\mu_2}, \mathcal T_0)=0.
\end{equation}
Indeed, for
$$
\mathcal V_{\mu_2}= \left(\begin{array}{cc}
- \frac{\epsilon}{\gamma}\nu_{0, \mu_2} & -\frac{\epsilon}{\gamma}\xi_{0, \mu_2}\\
1-\gamma & -\frac{\epsilon}{\gamma}\nu_{0, \mu_2}
\end{array}\right),\qquad \mathcal V_{+\infty}= \left(\begin{array}{cc}
- \frac{\epsilon}{\gamma}\nu_{0} & -\frac{\epsilon}{\gamma}\xi_{0}\\
1-\gamma & -\frac{\epsilon}{\gamma}\nu_{0}
\end{array}\right)
$$
and 
$$
\mathcal L_{\mu_2}=\left(\begin{array}{cc}
0 & \alpha |D|coth(\sqrt{\mu_2}|D|)-\alpha |D|\\
0& 0
\end{array}\right), \qquad \mathcal S_{+\infty}=\left(\begin{array}{cc}
0 & \mathcal B\\
0& 0
\end{array}\right)
$$
with $\mathcal B$ defined in (\ref{BO2}), we have $\mathcal Z_{\mu_2}= \mathcal L_{\mu_2} + \mathcal S_{+\infty}+\mathcal V_{\mu_2}$ and $\mathcal T_0=\mathcal S_{+\infty}+ \mathcal V_{+\infty}$. Then, from the relations (a)-(b) in Theorem \ref{A} we have
\begin{equation}\label{gap2}
\begin{array}{lll}
\widehat{\delta}(\mathcal Z_{\mu_2}, \mathcal T_0)&=\widehat{\delta}(\mathcal L_{\mu_2} + \mathcal S_{+\infty}+\mathcal V_{\mu_2}, \mathcal S_{+\infty}+ (\mathcal V_{+\infty}-\mathcal V_{\mu_2})+\mathcal V_{\mu_2})\\
\\
&\leqq  2(1+\|\mathcal V_{\mu_2}\|^2_{B(L^2)}) \widehat{\delta}(\mathcal L_{\mu_2} + \mathcal S_{+\infty}, \mathcal S_{+\infty}+ (\mathcal V_{+\infty}-\mathcal V_{\mu_2}))
\\
\\
&\leqq 2(1+\|\mathcal V_{\mu_2}\|^2_{B(L^2)})[\widehat{\delta}(\mathcal L_{\mu_2} + \mathcal S_{+\infty}, \mathcal S_{+\infty})+ \widehat{\delta}(\mathcal S_{+\infty},  \mathcal S_{+\infty}+(\mathcal V_{+\infty}-\mathcal V_{\mu_2}))]
\\
\\
&\leqq 2(1+\|\mathcal V_{\mu_2}\|^2_{B(L^2)})[\|\mathcal L_{\mu_2}\|_{B(L^2)} + \|\mathcal V_{\mu_2}-\mathcal V_{+\infty}\|_{B(L^2)}] \to 0
\end{array}
\end{equation}
as $\mu_2\to +\infty$, where we are used Theorem \ref{exisW} for obtaining 
$$
\lim_{\mu_2\to +\infty}\|\mathcal V_{\mu_2}-\mathcal V_{+\infty}\|_{B(L^2)}=0,
$$
and inequality (\ref{inequa}) for obtaining via Plancherel's Theorem that
\begin{equation}
\begin{array}{lll}
\|\mathcal L_{\mu_2}(f,g)^t\|^2&=\alpha \int_{\mathbb R} | |\xi| \coth(\sqrt{\mu_2}|\xi|)-|\xi||^2|\widehat{g}(\xi)|^2d\xi\\
\\
&\leqq \frac{\alpha}{\mu_2}\|g\|^2\to 0
\end{array}
\end{equation}
as $\mu_2\to +\infty$.

Hence the results of section IV-4 of Kato \cite{K} imply that the eigenvalues of $\mathcal Z_{\mu_2}$ depend continuously on $\mu_2$. In particular,  since zero is a simple  eigenvalue of $\mathcal T_0$, the eigenvalue zero with eigenfunction $( \xi'_{0,\mu_2}, \nu'_{0,\mu_2})$ is simple for $\mathcal Z_{\mu_2}$ and all $\mu_2$ sufficiently large. Also, for these values of $\mu_2$, zero is not a eigenvalue of $\mathcal Z_{\mu_2}$ in $X_e$, and therefore the mapping $\mathcal Z_{\mu_2}: X_e\to L^2(\mathbb R)\times L^2(\mathbb R)$ is invertible. Moreover, since $Q$ and $\mathcal Z_{(c,\xi,\nu)}$ are smooth maps on their domains, we have from the Implicit Function Theorem that there exist a number $\eta>0$ and a smooth map $c\in (-\eta, \eta)\to (\xi_{c, \mu_2}, \nu_{c, \mu_2})\in X_e$ such that $Q(c, \xi_{c, \mu_2}, \nu_{c, \mu_2})=0$ for all $c\in (-\eta, \eta)$ and $\mu_2$ sufficiently large. This proves the theorem.

\end{proof}

\section{Properties of the ILW and BO systems solitary waves}
Similarly to the case of Boussinesq-Full dispersion systems we establish here qualitative  properties of the solitary wave solutions to the ILW and BO systems.

 Concerning the smoothness of the solitary wave solutions of the ILW and BO systems, one cannot apparently implement a bootstrap argument from \eqref{BO3} or \eqref{W3}. Since the solitary wave solutions to both the BO and ILW equation are smooth ($H^\infty(\R)),$  one has instead to apply the implicit function theorem in the Sobolev space $H^n (\R)$ where  n is arbitrary large.

On the other hand, the proof of  the decay properties of the solitary waves follows the lines of the proof of the corresponding properties for the Boussinesq-Full dispersion  solitary waves (see Section 4) yielding  that the solitary waves of the ILW system decay exponentially while those of the BO system have a algebraic decay $1/x^2$. We give some details below.

\subsection{Decay of solitary wave solutions for the BO system}
We will prove that the solutions for BO system (\ref{BO3}) have a polynomial decay. More exactly, we have the following result.

\begin{teo}\label{BO} Let $(\xi, \nu)$ be a solution of system (\ref{BO3}) given by Theorem \ref{exisBO} (or by Theorem \ref{exisBO2}). Then there exists a constant $N\in \mathbb R$ such that for all $x\in \mathbb R$,
$$
x^2 |\xi(x)|\leqq N,\;\;\text{and}\;\; x^2 |\nu(x)|\leqq N.
$$
\end{teo}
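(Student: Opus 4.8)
The plan is to mirror the proof of Theorem \ref{decay}: reduce the system (\ref{BO3}) to a single convolution equation for $\nu$ whose kernel governs the decay rate, and then read off the decay of $\xi$ from the purely algebraic second equation.

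First I would eliminate $\xi$. Substituting $\xi=\frac{1}{1-\gamma}\big(c\nu+\frac{\epsilon}{2\gamma}\nu^2\big)$ into the first equation of (\ref{BO3}) and moving the linear term $\frac{c^2}{1-\gamma}\mathcal D\nu$ to the left-hand side, $\nu$ satisfies
\[
\Big(\mathcal B-\tfrac{c^2}{1-\gamma}\mathcal D\Big)\nu=G(\nu),\qquad
G(\nu)=\tfrac{c\epsilon}{2\gamma(1-\gamma)}\mathcal D(\nu^2)+\tfrac{c\epsilon}{\gamma(1-\gamma)}\nu^2+\tfrac{\epsilon^2}{2\gamma^2(1-\gamma)}\nu^3 .
\]
Using the symbols in (\ref{BO2}), the operator on the left has even symbol $A+B|y|$ with $A=\frac1\gamma-\frac{c^2}{1-\gamma}$ and $B=\frac{(\beta-1)\sqrt\mu}{\gamma^2}-\frac{c^2\beta\sqrt\mu}{\gamma(1-\gamma)}$; since $\beta>1$, both are strictly positive for $|c|<\delta$ with $\delta$ as in Theorem \ref{exisBO}. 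Hence the operator is invertible, and assuming the smoothness of $(\xi,\nu)$ (obtained, as in the remark opening Section 7, by applying the implicit function theorem in $H^n(\mathbb{R})$ for $n$ arbitrarily large) we have $G(\nu)\in L^2(\mathbb{R})$ and may write
\[
\nu=K\ast G(\nu),\qquad \widehat K(y)=\frac{1}{A+B|y|}.
\]

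The decisive step is the kernel estimate. Writing $\frac{1}{A+B|y|}=\frac1A\int_0^\infty e^{-t}e^{-(B/A)t|y|}\,dt$ and inverting the Poisson kernels $e^{-(B/A)t|y|}$, one sees that $K\in C^\infty(\mathbb{R}-\{0\})$ has only a mild (logarithmic) singularity at the origin and satisfies $\lim_{|x|\to\infty}x^2K(x)=\text{const}\neq 0$, exactly as for the Benjamin--Ono kernel; the $1/x^2$ rate is produced by the corner of $\widehat K$ at $y=0$ coming from the $|y|$ term. I would then invoke the theory of Bona and Li \cite{BL}: since $G(\nu)\in L^1(\mathbb{R})$ and decays at least like $1/x^2$, the convolution inherits the decay of $K$, giving $\lim_{|x|\to\infty}x^2\nu(x)=D$ for some constant $D$, whence $x^2|\nu(x)|\leqq N$.

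The decay of $\xi$ is then immediate and, unlike in Theorem \ref{decay}, requires no inversion of a Bessel operator: the second equation of (\ref{BO3}) expresses $\xi$ algebraically through $\nu$ and $\nu^2$, so that $\nu\sim 1/x^2$ and $\nu^2\sim 1/x^4$ yield $x^2|\xi(x)|\leqq N$ directly. I expect the main obstacle to be the convolution analysis, specifically controlling the nonlocal term $\mathcal D(\nu^2)$ in $G(\nu)$, i.e. $|D|(\nu^2)$, which decays only at the same $1/x^2$ rate as $K$ rather than faster: one must check that $G(\nu)$ is integrable, so that its mass concentrates near the origin relative to the slowly decaying kernel, and that the Bona--Li machinery accommodates a nonlinearity decaying no faster than the kernel itself.
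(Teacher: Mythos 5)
Your proposal is correct and follows essentially the same route as the paper: reduce to a convolution equation $\nu=K\ast G(\nu)$ whose kernel has symbol of the form $(A+B|y|)^{-1}$, show $x^2K(x)$ has a finite nonzero limit, invoke Bona--Li for $\nu$, and read off the decay of $\xi$ from the algebraic second equation. The only (cosmetic, and arguably tidier) difference is that you absorb the linear term $\frac{c^2}{1-\gamma}\mathcal D\nu$ into the left-hand operator, whereas the paper keeps it inside the source term $G_3(\nu)=\gamma\alpha(P(\nu)+cH(\nu))$ and inverts the bare operator $\alpha+|D|$; your closing remark about the integrability of $G(\nu)$ and the borderline decay of $|D|(\nu^2)$ correctly identifies the point the paper glosses over.
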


 \begin{proof} If $(\xi, \nu)$ is a solution of system (\ref{BO3}) we have the following relation for $\nu$
\begin{equation}\label{decay8}
(\alpha + |D|)\nu=\gamma \alpha (P(\nu)+c H(\nu))\equiv G_3(\nu)\in L^2(\mathbb R)
\end{equation}
where $\alpha=\frac{\gamma}{(\beta-1)\sqrt{\mu}}>0$, $P(\nu)=\frac{\epsilon}{\gamma(1-\gamma)} \Big (c \nu + \frac{\epsilon}{2\gamma}\nu^2\Big)$ and $H(\nu)= \frac{1}{1-\gamma} \mathcal D \Big (c \nu + \frac{\epsilon}{2\gamma}\nu^2\Big)$.
Hence, equation (\ref{decay8}) can be written in  convolution form
\begin{equation}\label{decay9}
\nu=K_2\ast G_3(\nu)
\end{equation}
where the kernel $K_2$ is the inverse Fourier transform of
\begin{equation}\label{decay10}
\widehat{K_2}(y)=\frac{1}{|y| +\alpha}
\end{equation}
So, via the Residue Theorem, it  follows that $K_2$ is determined explicitly by
\begin{equation}\label{decay11}
K_2(x)= \frac{\sqrt{2}}{\sqrt{\pi}}\int_0^{+\infty} \frac{y e^{-|x|y}}{\alpha^2 + y^2}dy,
\end{equation}
and consequently
$$
\lim_{|x|\to +\infty} x^2 K_2(x)= \frac{\sqrt{2}}{\sqrt{\pi}}\frac{1}{\alpha^2}.
$$
Then, it follows that $\nu$ satisfies the relation
$$
\lim_{|x|\to \infty} x^2 \nu(x)=D_1,
$$
where $D_1$ is a constant. Moreover, from the second equation in (\ref{BO3}) follows immediately that $ x^2 |\xi(x)|\leqq N$ for all $x\in \mathbb R$. This finishes the theorem.
\end{proof}

\subsection{Decay of solitary wave solutions for the ILW system}
We will prove that the solutions for ILW system (\ref{W3}) have a exponential decay. More exactly, we have the following result.

\begin{teo}\label{W} Let $(\xi, \nu)$ be a solution of system (\ref{W3}) given by Theorem \ref{exisW1}. Then there are positive constants $\eta_1, N_1>0$ such that for all $x\in \mathbb R$,
$$
e^{\eta_1|x|} |\xi(x)|\leqq N_1,\;\;\text{and}\;\; e^{\eta_1|x|}  |\nu(x)|\leqq N.
$$
\end{teo}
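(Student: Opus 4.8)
The plan is to mimic the finite-$\mu_2$ argument of Section 4 (Theorem \ref{decayfi}): reduce the system to a single convolution equation for $\nu$, identify a kernel whose decay is governed by the analytic structure of the symbol of the linear part, and then transfer the decay to $\xi$. First I would eliminate $\xi$ using the second equation of (\ref{W3}), which gives $\xi = G_0(\nu) := \frac{1}{1-\gamma}\big(c\nu + \frac{\epsilon}{2\gamma}\nu^2\big)$ algebraically, with no operator to invert. Substituting into the first equation of (\ref{W3}) yields a scalar equation $\mathcal A\,\nu = G(\nu)$, where $\mathcal A = \mathcal Z - \frac{c^2}{1-\gamma}\mathcal W$ collects the terms linear in $\nu$ and $G(\nu)=\frac{c\epsilon}{2\gamma(1-\gamma)}\mathcal W(\nu^2) + \frac{c\epsilon}{\gamma(1-\gamma)}\nu^2 + \frac{\epsilon^2}{2\gamma^2(1-\gamma)}\nu^3$ gathers the quadratic and cubic terms. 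For $|c|$ small, which is the regime of Theorem \ref{exisW1}, the symbol of $\mathcal A$,
\[
A(y) = \frac{1}{\gamma}\Big(1+\tfrac{\beta-1}{\gamma}\sqrt{\mu}\,|y|\coth(\sqrt{\mu_2}|y|)\Big) - \frac{c^2}{1-\gamma}\Big(1+\tfrac{\beta}{\gamma}\sqrt{\mu}\,|y|\coth(\sqrt{\mu_2}|y|)\Big),
\]
is an even, smooth, strictly positive function on $\mathbb{R}$ bounded below by a positive constant and growing like $|y|$, so $\mathcal A$ is invertible and $\nu = K \ast G(\nu)$ with $\widehat{K} = 1/A$.

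Regarding regularity I would invoke the remark opening this section: since a direct bootstrap on (\ref{W3}) is not apparently available, smoothness $\nu \in H^\infty(\mathbb{R})$, and hence $\xi \in H^\infty(\mathbb{R})$, is obtained by running the implicit function theorem of Theorem \ref{exisW1} in $H^n(\mathbb{R})$ for arbitrarily large $n$. This guarantees $G(\nu) \in L^2(\mathbb{R})$ and lets me treat the convolution identity in the Bona--Li framework \cite{BL}.

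The heart of the matter, and the step I expect to be the main obstacle, is the exponential decay of the kernel $K$. Unlike the B-FD finite case, the symbol $A$ is only of order $|y|$, since there is no $|D|^2$ term in $\mathcal Z$ or $\mathcal W$, so one cannot reduce to $(|D|^2+\sigma^2)$ and read off $e^{-\sigma|x|}$ as in Theorem \ref{decayfi}. Instead the decay comes from analyticity in a strip: writing $\Phi(y)=y\coth(\sqrt{\mu_2}y)$, the function $A$ extends to an even function holomorphic in $\{|\operatorname{Im} y| < \pi/\sqrt{\mu_2}\}$, the poles of $\coth$ at $\pm i\pi/\sqrt{\mu_2}$ being the nearest singularities. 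On the imaginary axis $y=is$ one has $\Phi(is)=s\cot(\sqrt{\mu_2}s)$, and since $u\cot u$ decreases strictly from $1$ to $-\infty$ on $(0,\pi)$, the value $A(is)$ decreases from $A(0)>0$ to $-\infty$ as $s \uparrow \pi/\sqrt{\mu_2}$. Hence, for $|c|$ small, $A(is)$ has exactly one zero $s_0 \in (0,\pi/\sqrt{\mu_2})$, a simple one, fixed at leading order by $s_0\cot(\sqrt{\mu_2}s_0) = -\gamma/((\beta-1)\sqrt{\mu})$, and no other zeros in the strip. Shifting the Fourier inversion contour to $\operatorname{Im} y = \pm\eta_1$ for any $\eta_1 < s_0$ and collecting the residue at $\pm i s_0$ then gives $|K(x)| \leq C e^{-\eta_1|x|}$; equivalently, one splits $\widehat{K}$ into the rational residue part $\propto s_0/(s_0^2+y^2)$, whose inverse transform is the explicit exponential $e^{-s_0|x|}$ exactly as the kernel $K_1$ in Theorem \ref{decayfi}, plus a remainder holomorphic up to $\pi/\sqrt{\mu_2}$ and hence decaying faster.

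With $K$ decaying exponentially, the Bona--Li theory \cite{BL} applied to $\nu = K\ast G(\nu)$, where $G(\nu)$ is genuinely nonlinear and so decays faster than $\nu$, yields that $\lim_{|x|\to\infty} e^{\eta_1|x|}\nu(x)$ is finite, that is $e^{\eta_1|x|}|\nu(x)| \leq N$. Finally, transferring the decay to $\xi$ is immediate and requires no inverse operator, in contrast to the B-FD case: from $\xi = \frac{1}{1-\gamma}\big(c\nu + \frac{\epsilon}{2\gamma}\nu^2\big)$ one gets $e^{\eta_1|x|}|\xi(x)| \leq \frac{1}{1-\gamma}\big(|c|\,e^{\eta_1|x|}|\nu(x)| + \frac{\epsilon}{2\gamma}e^{\eta_1|x|}\nu(x)^2\big) \leq N_1$, which completes the proof. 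The delicate points to be verified carefully are the absence of spurious complex zeros of $A$ off the imaginary axis inside the strip for small $c$, and the check that $G(\nu)$ carries enough decay to feed the Bona--Li iteration to the sharp rate.
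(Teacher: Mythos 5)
Your proposal is correct and follows essentially the same route as the paper: reduce to a convolution equation for $\nu$, locate the purely imaginary poles of the reciprocal symbol (your condition $s_0\cot(\sqrt{\mu_2}\,s_0)=-\gamma/((\beta-1)\sqrt{\mu})$ is exactly the paper's pole equation $-\eta=\theta\tan\eta$ with $\theta=\frac{\gamma\sqrt{\mu_2}}{(\beta-1)\sqrt{\mu}}$ after the rescaling $u=\sqrt{\mu_2}\,s$, so your $s_0$ is the paper's $\eta_1/\sqrt{\mu_2}$), invoke Bona--Li for $\nu$, and transfer the decay to $\xi$ algebraically. The one substantive point where you depart from (and slightly improve on) the paper is that you invert $\mathcal A=\mathcal Z-\frac{c^2}{1-\gamma}\mathcal W$ rather than $\mathcal Z$ alone, so that your right-hand side $G(\nu)$ is genuinely superlinear, whereas the paper's $G_4(\nu)$ retains the linear term $\frac{\gamma c^2}{1-\gamma}\mathcal W\nu$, which the Bona--Li framework does not directly accommodate without an extra smallness argument in $c$.
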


 \begin{proof} For $(\xi, \nu) \in H^1(\mathbb R)\times H^1(\mathbb R)$ solution of system (\ref{W3}) we will see that $\mathcal W \xi\in L^2(\mathbb R) $. Indeed, from the relation
 $$
\mathcal W \xi=\Big(1+\frac{\beta\sqrt{\mu}}{\gamma \sqrt{\mu_2}}\Big)\xi+ \frac{\beta\sqrt{\mu}}{\gamma}\mathcal T|D|\xi
 $$
 where $\mathcal T=coth(\sqrt{\mu_2}|D|)-\frac{1}{\sqrt{\mu_2}|D|}$ is a bounded operator on $L^2(\mathbb R)$, we obtain immediately that $\mathcal W \xi\in L^2(\mathbb R)$. Moreover, from (\ref{W3}) follows that $\gamma \mathcal Z \nu=G_4(\nu)\in  L^2(\mathbb R)$. Thus, we obtain the 
relation
\begin{equation}\label{decay12}
\nu=K_3\ast G_4(\nu)
\end{equation}
where the kernel $K_3$ is the inverse Fourier transform of
\begin{equation}\label{decay13}
\widehat{K_3}(y)=\frac{1}{1+\frac{\beta-1}{\gamma}\sqrt{\mu}|y| coth(\sqrt{\mu_2} |y|)}=\theta\frac{sinh(\sqrt{\mu_2} y)}{\sqrt{\mu_2} y cosh(\sqrt{\mu_2} y)+ \theta sinh(\sqrt{\mu_2} y)}.
\end{equation}
where $\theta=\frac{\gamma\sqrt{\mu_2}}{(\beta-1)\sqrt{\mu}}>0$. Now, in order to find  $K_3$ we define the following function $h_\theta$,
\begin{equation}\label{decay14}
\widehat{h_\theta}(y)=\frac{sinh(y)}{ y cosh( y)+ \theta sinh( y)}.
\end{equation}
Then, the meromorphic function on the right hand-side of (\ref{decay14})  has  countably many poles located at point $y=i\eta$ where $\eta\in \mathbb R$ and $\eta \neq 0$ satisfy the relation $-\eta=\theta tan\; \eta$.  Thus an application of the residue theorem implies that its inverse Fourier transform can be expressed as 
\begin{equation}\label{decay15}
h_\theta(x)=\sqrt{2\pi} \sum_{m=1}^\infty\frac{tan\; \eta_m}{\eta_m tan\; \eta_m-\theta-1}e^{-\eta_m |x|}
\end{equation}
where $\{i\eta_m\}_{m\geqq 1}$ comprise the poles of $\widehat{h_\theta}$ on the positive  imaginary axis, numbered so that $\eta_m<\eta_{m+1}$ for $m=1, 2, 3, \cdot\cdot\cdot$. More exactly, $\eta_m\in (\frac{(2m-1)}{2}\pi, m\pi)$.

Now, from the relation $K_3(x)=\theta \frac{1}{\sqrt{\mu_2}} h_\theta (\frac{x}{\sqrt{\mu_2}})$ we obtain from (\ref{decay15})  for any $\sigma$ with $0<\sigma \leqq \eta_1$ that
$$
\lim_{|x|\to +\infty} e^{\sigma |x|} K_3(x)= \frac{\sqrt{2\pi}\gamma}{(\beta-1) \sqrt{\mu}}\;\frac{tan\; \eta_1}{\eta_1 tan \;\eta_1 -\theta-1}
$$
Then, from (\ref{decay12}) follows that $\nu$ satisfies the relation
$$
\lim_{|x|\to \infty} e^{\eta_1 |x|}  \nu(x)=D_2,
$$
where $D_2$ is a constant. Moreover, from the second equation in (\ref{W3}) follows immediately $e^{\eta_1 |x|}   |\xi(x)|\leqq N_1$ for all $x\in \mathbb R$. This finishes the theorem.
\end{proof}

\section{Conclusion and open problems}
We have establish the existence of solitary wave solutions for the Boussinesq-Full Dispersion, Intermediate Long Wave and Benjamin-Ono systems for a rather restricted range of parameters. We indicate below some related open questions that will be addressed in subsequent works.
\begin{itemize}
\item Complete the Cauchy theory for the B-FD systems, in particular long time existence issues.
\item Existence in other ranges of parameters for the existence of solitary waves for the B-FD, BO and ILW systems.
\item Non existence of solitary waves for some ranges of parameters.
\item Stability (including transverse stability ) issues.
\end{itemize}

\section{Appendix A}
In this Appendix we  state and prove the global existence of small solutions  for the one-dimensional Hamiltonian ($b=d$) system \eqref{BF} for some specific choices of the coefficients, inspired by a similar result for the classical Boussinesq systems (see \cite{BCS2} section 4.2). 

We thus consider the one-dimensional version of \eqref{BF} where we skip the underscore $\beta$, that is

\begin{equation}\label{BF1d}
\left\{\begin{array}{lll}
(1-\mu b\partial_x^2)\partial_t\zeta + \frac1\gamma ((1-\epsilon\zeta){v})_x\\
\;\;\;\;- \frac{\sqrt{\mu}}{\gamma^2} |D| \coth(\sqrt{\mu_2}|D|)v_x + \frac{\mu}{\gamma}\big(a-\frac{1}{\gamma^2} \coth^2(\sqrt{\mu_2}|D|)v_{xxx}=0,\\
(1-\mu d\partial_x)\partial_t {\bf{v}}_\beta+ (1-\gamma)  \zeta_x - \frac{\epsilon}{\gamma}vv_x + \mu c(1-\gamma) \zeta_{xxx}=0,
\end{array}\right.
\end{equation}

leading to the Hamiltonian

\begin{multline}
 H(\zeta, v)=\int_{\mathbb R} (\frac{1-\gamma}{2}\zeta^2+\frac{1}{2}| v|^2-\frac{\epsilon}{2\gamma}\zeta v^2-\frac{\mu c}{2} (1-\gamma)| \zeta_x|^2\\-\frac{a\mu}{2\gamma}| v_x|^2+\frac{\sqrt \mu}{2\gamma^2}|L_1^{1/2}v|^2+\frac{\mu}{2\gamma^3}|L_2^{1/2}  v|^2), 
 \end{multline}

Throughout this Appendix we will assume that 

$$b=d>0, \;a\leq 0,\; c< 0.$$

As in \cite{BCS2} we start with the non degenerate case 
\begin{equation}\label{nondeg}
b=d>0,\;a<0,\;c<0.
\end{equation}

\begin{teo}\label{global}
Assume that \eqref{nondeg} holds and suppose that $(\zeta_0,v_0)\in H^s(\R)\times H^s(\R),\; s\geq 1$ is such that 
\begin{equation}\label{condH}
|H(\zeta_0,v_0)|<\frac{\gamma^2(1-\gamma)\sqrt{\mu|c|}}{\epsilon^2}, \quad \inf_{x\in \R}(1-\frac{\epsilon}{\gamma}\zeta_0(x))>0.
\end{equation}
Then the corresponding solution $(\zeta,v)$ of the one-dimensional system is global in $H^s(\R)\times H^s(\R)$ and is furthermore uniformly bounded in $H^1(\R)\times H^1(\R).$

\end{teo}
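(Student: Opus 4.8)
The plan is to combine the local well-posedness theory of Cung \cite{Cung} (applicable here since $b=d>0$, $a\le 0$, $c<0$) with the conservation of the Hamiltonian $H$ in order to produce a uniform-in-time bound in $H^1(\R)\times H^1(\R)$, and then to promote this bound to global existence in $H^s$ by persistence of regularity. The local theory furnishes a unique maximal solution on $[0,T^*)$ together with a blow-up alternative; since every coefficient operator in the system is a Fourier multiplier, $H$ is conserved along the flow. Thus it suffices to show that the hypotheses $|H_0|<H_*:=\frac{\gamma^2(1-\gamma)\sqrt{\mu|c|}}{\epsilon^2}$ and $\inf_x(1-\frac{\epsilon}{\gamma}\zeta_0)>0$ force an a priori bound on $\|\zeta(t)\|_1+\|v(t)\|_1$.

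First I would isolate the sign structure of $H$. Because $1-\gamma>0$, $c<0$, $a<0$ and $L_1,L_2$ are nonnegative operators, every quadratic term of $H$ except the cubic coupling is nonnegative, and only $-\frac{\epsilon}{2\gamma}\int_\R \zeta v^2$ is indefinite. I would bound it below by keeping only the favorable sign, $-\frac{\epsilon}{2\gamma}\int_\R\zeta v^2\ge -\frac{\epsilon}{2\gamma}(\sup_x\zeta)_+\|v\|^2$; the crucial point is that only $\sup_x\zeta$, and not $|\zeta|_\infty$, enters, which is precisely why the non-cavitation hypothesis is the correct one. Setting $m(t)=\frac{\epsilon}{\gamma}\sup_x\zeta(x,t)$, using the sharp one-dimensional inequality $|\zeta|_\infty^2\le \|\zeta\|\,\|\zeta'\|$ and the arithmetic--geometric mean inequality applied to $\frac{1-\gamma}{2}\|\zeta\|^2$ and $\frac{\mu|c|(1-\gamma)}{2}\|\zeta'\|^2$, I obtain the key estimate
\[
H \;\ge\; H_*\,m_+^2 \;+\; \tfrac12\,(1-m_+)\,\|v\|^2 ,
\]
where $m_+=\max(m,0)$ and $H_*=\frac{\gamma^2(1-\gamma)\sqrt{\mu|c|}}{\epsilon^2}$.

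The heart of the argument is then a continuity (barrier) argument in $t$. Since $t\mapsto \sup_x\zeta(\cdot,t)$ is continuous (the solution is continuous into $H^1\hookrightarrow L^\infty$) and $m(0)<1$ by hypothesis, if $m$ were to reach the value $1$ there would be a first such time $t_0$; at $t_0$ the $v$-term above vanishes and one is left with $H_0=H\ge H_*$, contradicting $|H_0|<H_*$. Hence $m(t)<1$ for all $t\in[0,T^*)$. Feeding this back, the same inequality gives $H_0\ge H_*\,m_+^2\ge 0$, so $m_+(t)\le \rho:=\sqrt{H_0/H_*}<1$ uniformly; consequently the coefficient $\tfrac12(1-m_+)\ge \tfrac12(1-\rho)$ stays bounded away from $0$, and retaining also the nonnegative terms $\frac{\mu|a|}{2\gamma}\|v'\|^2$, $\frac{1-\gamma}{2}\|\zeta\|^2$ and $\frac{\mu|c|(1-\gamma)}{2}\|\zeta'\|^2$ yields $H_0\ge C_0(\|\zeta\|_1^2+\|v\|_1^2)$ with $C_0>0$ fixed. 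This is the desired uniform $H^1\times H^1$ bound.

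Finally, to upgrade to global existence in $H^s$ for $s\ge 1$, I would run a standard energy estimate. After inverting $J_b$ the system reads $\partial_t U=\mathcal A U+\mathcal N(U)$ with $U=(\zeta,v)$, where $\mathcal A$ is a skew-adjoint Fourier-multiplier operator for the inner product associated with the coercive quadratic part of $H$ (hence it does not contribute to the growth of the equivalent $H^s$-energy), while the nonlinearity $\mathcal N(U)=J_b^{-1}\partial_x(\,\cdot\,)$ recovers one derivative thanks to $b>0$. Using the Kato--Ponce/Moser product estimate $\|\zeta v\|_{s}\le C(\,|\zeta|_\infty\|v\|_s+|v|_\infty\|\zeta\|_s)$ together with the already-established $H^1$, hence $L^\infty$, bound, one arrives at $\frac{d}{dt}\|U\|_s^2\le C\,\|U\|_1\,\|U\|_s^2$ with $\|U(t)\|_1$ bounded, and Gronwall's inequality then excludes finite-time blow-up, so $T^*=\infty$. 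I expect the main obstacle to be the coercivity/barrier step, and specifically the handling of the cubic term with the correct sign so that only $\sup_x\zeta$ appears and the sharp threshold $H_*$ emerges; a secondary technical point is to verify that the local theory's blow-up criterion, combined with the derivative-recovering structure of $\mathcal N$, indeed lets the uniform $H^1$ bound close the $H^s$ estimate.
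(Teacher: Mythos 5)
Your argument is correct and is essentially the paper's own proof: both rest on conservation of the Hamiltonian, the one–dimensional inequality $|\zeta|_\infty^2\le\|\zeta\|\,\|\zeta_x\|$ combined with AM--GM on the $\zeta$-quadratic part of $H$ to produce exactly the threshold $H_*=\gamma^2(1-\gamma)\sqrt{\mu|c|}/\epsilon^2$, a continuity argument propagating the non-cavitation condition $1-\frac{\epsilon}{\gamma}\zeta>0$ (which is what makes the cubic term harmless), the resulting coercivity of $H$ for the uniform $H^1\times H^1$ bound, and the standard persistence-of-regularity step for $s>1$. Your packaging of the bootstrap as the single barrier inequality $H\ge H_*m_+^2+\tfrac12(1-m_+)\|v\|^2$ is just a tidier rendering of the same estimate (provided you note that the quadratic $\zeta$-terms consumed by the AM--GM step are reused only \emph{after} the barrier step, in a separate coercivity inequality), so no substantive difference from the paper.
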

\begin{proof}
We follow closely the argument used in \cite{BCS2} for some Hamiltonian Boussinesq systems. In order to avoid technicalities we will focus on the case $s=1.$

Using \eqref{condH} and the the fact that the local solution is continuous in time with value in $H^1(\R)$ we infer that there exists $t_0>0$ such that the local solution satisfies $1-\frac{\epsilon}{2\gamma}\zeta(x,t)>0$ for all $x\in \R$ and  $0\leq t<t_0$  so that for the same values of $(x,t)$ we have

\begin{equation}
\begin{array}{lll}
\zeta^2(x,t)&\leq  \int_{\mathbb R} |\zeta\zeta_x|dx=\frac{1}{\sqrt{\mu|c|}}\int_{\mathbb R} \sqrt{\mu|c|}|\zeta \zeta_x| dx\leq \frac{1}{2\sqrt{\mu |c|}}\int_{\mathbb R}(\zeta ^2+\mu|c|\zeta_x^2)dx\\
&\leq \frac{1}{(1-\gamma)\sqrt{\mu|c|}}H((\zeta(\cdot,t),v(\cdot,t))\leq \frac{1}{(1-\gamma)\sqrt{\mu|c|}}H(\zeta_0,v_0)\equiv \alpha^2
\end{array}
\end{equation}

Assuming \eqref{condH}, it follows that $\alpha^2 <\frac{\gamma^2}{\epsilon^2},$  implying that

\begin{equation}\label {condeta}
\sup_{x\in \R}|\zeta (x,t)|\leq \alpha<\frac{\gamma}{\epsilon},\quad \text {for}\; 0\leq t\leq t_0.
\end{equation}

Therefore, one has $1-\frac{\epsilon}{\gamma}\zeta(x,t)>0$ for all $x\in \R.$ Moreover, as long as the solution continues to exist in $H^1(\R)$, this positive lower bound on $1-\frac{\epsilon}{\zeta}$ continues to hold, independentely of $t\geq 0,$ by reapplying the above argument.

The uniform $H^1$ bound results then for the conservation of the Hamiltonian. Once the $H^1(\R)$ norm of $\zeta$ and $v$ are known to be uniformly bounded   a standard argument (see the proof of Theorem 4.2 in \cite{BCS2}) implies that the $H^s(\R, s>1$ of  $\zeta$ and $v$ remain bounded on bounded time intervals provided $(\zeta_0,v_0)\in H^s(\R^2)\times H^s(\R).$

\end{proof}

We now turn  to the degenerate case 
\begin{equation}\label{deg}
b=d>0,\; a=0,\;c<0.
\end{equation}
The result is now
\begin{teo}
Assume that \eqref{deg} holds and suppose that $(\zeta_0,v_0)\in H^{s+1}(\R)\times H^{s+1/2}(\R),\; s\geq 1$ is such that \eqref{condH} holds.

Then the corresponding solution $(\zeta,v)$ of the one-dimensional system is global in $H^{s+1}(\R)\times H^{s+1/2}(\R)$ and is furthermore uniformly bounded in $H^1(\R)\times H^{1/2}(\R).$

\end{teo}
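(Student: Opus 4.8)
The plan is to follow the scheme of Theorem \ref{global} and of the corresponding argument in \cite{BCS2}, resting on the conservation of the Hamiltonian $H$ combined with a continuity/bootstrap argument. The only structural change produced by the degeneracy $a=0$ is that the term $-\frac{a\mu}{2\gamma}|v_x|^2$ disappears from $H$, so that its positive part no longer controls $v$ in $H^1$; the best coercivity one can extract for $v$ comes from the surviving dispersive term $\frac{\sqrt\mu}{2\gamma^2}|L_1^{1/2}v|^2$, which only reaches the $H^{1/2}$ level. This is exactly why the uniform-in-time bound is stated in $H^1(\R)\times H^{1/2}(\R)$ rather than in $H^1\times H^1$, and why the propagated regularity is the asymmetric pair $H^{s+1}\times H^{s+1/2}$.

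First I would reproduce, essentially verbatim, the $L^\infty$ estimate on $\zeta$ from the proof of Theorem \ref{global}. That estimate uses only $c<0$ together with the elementary bound $\zeta^2(x)\leq\int_\R|\zeta\zeta_x|\,dx$, and it is therefore insensitive both to the value of $a$ and to the regularity of $v$. With the same $\alpha$ as in Theorem \ref{global}, one obtains, as long as $1-\frac{\epsilon}{\gamma}\zeta>0$ holds, the uniform bound $\sup_{x\in\R}|\zeta(x,t)|\leq\alpha$, and the smallness hypothesis \eqref{condH} again forces $\alpha<\gamma/\epsilon$. A continuity argument (the local solution being continuous in time with values in the higher space) then upgrades the positivity $1-\frac{\epsilon}{\gamma}\zeta(x,t)\geq 1-\frac{\epsilon\alpha}{\gamma}>0$ to all times of existence, independently of $t$.

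The genuinely new point is the coercivity in $v$. Once the positive lower bound on $1-\frac{\epsilon}{\gamma}\zeta$ is available, the indefinite contribution groups as $\frac12 v^2\big(1-\frac{\epsilon}{\gamma}\zeta\big)\geq\frac12\big(1-\frac{\epsilon\alpha}{\gamma}\big)v^2\geq 0$, so $H$ dominates a positive multiple of $\|v\|_{L^2}^2$. To recover the half-derivative I would examine the Fourier symbol of $L_1=|D|\coth(\sqrt\mu|D|)$: since $y\coth y\geq 1$, the symbol $|\xi|\coth(\sqrt\mu|\xi|)$ is bounded below by $1/\sqrt\mu$ and behaves like $|\xi|$ as $|\xi|\to\infty$, so the term $\frac{\sqrt\mu}{2\gamma^2}|L_1^{1/2}v|^2$ controls $\int_\R(1+|\xi|)|\hat v(\xi)|^2\,d\xi$, that is $\|v\|_{H^{1/2}}^2$. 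Together with the $\zeta$-estimate this yields
\[
H(\zeta,v)\gtrsim \|\zeta\|_{H^1}^2+\|v\|_{H^{1/2}}^2,
\]
and conservation of $H$ gives the claimed uniform bound in $H^1(\R)\times H^{1/2}(\R)$. I expect this step, namely squeezing the maximal available coercivity out of the degenerate Hamiltonian and settling for $H^{1/2}$ in $v$, to be the main obstacle: the loss relative to the non-degenerate case is real and is precisely what dictates the asymmetric regularity indices.

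Finally, global existence in $H^{s+1}(\R)\times H^{s+1/2}(\R)$ follows from the local well-posedness theory (valid in the range $a\leq 0$, $c<0$) together with the a priori $H^1\times H^{1/2}$ bound, by the standard continuation argument used for Theorem 4.2 in \cite{BCS2}. Concretely, one derives an energy inequality for the top-order quantity $\|\zeta\|_{H^{s+1}}^2+\|v\|_{H^{s+1/2}}^2$ whose right-hand side is linear in this quantity with coefficient controlled by the now uniformly bounded $H^1\times H^{1/2}$ norm, after which Gronwall's inequality rules out finite-time blow-up. The asymmetric pair $(s+1,s+1/2)$ is the natural scale adapted to the degenerate dispersion, the extra derivative on $\zeta$ reflecting the $\zeta_{xxx}$ coupling carried by the coefficient $c$, and the half derivative on $v$ reflecting the surviving $L_1\sim|D|^{1/2}$ smoothing.
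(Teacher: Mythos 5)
Your proposal is correct and follows exactly the route the paper intends: the paper's own proof of this theorem is a one-line reference to Theorem \ref{global} and to Theorem 4.3 of \cite{BCS2}, and your argument is a faithful, more detailed expansion of that scheme (the $L^\infty$ bound on $\zeta$ from the $c<0$ part of the Hamiltonian plus the bootstrap on $1-\frac{\epsilon}{\gamma}\zeta$, the observation that with $a=0$ the residual coercivity in $v$ comes only from $|L_1^{1/2}v|^2$ and stops at $H^{1/2}$, and continuation via local well-posedness). The symbol bound $|\xi|\coth(\sqrt{\mu}|\xi|)\geq\max\{1/\sqrt{\mu},|\xi|\}$ that you use to extract the $H^{1/2}$ control is precisely what makes the asymmetric indices $(s+1,s+1/2)$ the natural ones here.
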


\begin{proof}
The proof is similar to that of Theorem 9.1 (see also Theorem 4.3 in \cite{BCS2}).
\end{proof}

\begin{remark}
It has been recently proven (\cite{KMPP}) that for some of the Hamiltonian Boussinesq abcd systems having global small solutions (see \cite{BCS2}), those solutions decay to zero, locally strongly in the energy space, uniformly in proper subsets of the light cone $|x|\leq |t].$ It would be interesting to extend those scattering results  to the systems studied in this Appendix.
\end{remark}

\section{Appendix B}

In this Appendix, we state some facts from perturbation theory of
closed linear operator on Hilbert spaces that we have used along this work
(see Kato \cite{K} for details).

We consider $L^2(\mathbb R)\times L^2(\mathbb R)$ the Hilbert space with norm defined by
$\|( f, g)\|^2=\|f\|^2+\|g\|^2$, and for any closed operator $T$ on $L^2(\mathbb R)$ with
domain $D(T)$, its graph, $\mathcal G(T)=\{(f, g)\in L^2(\mathbb R)\times L^2(\mathbb R) : f\in D(T), T(f)=g\}$. Then a metric $\widehat{\delta}$ on $\mathcal C\equiv\mathcal C(L^2(\mathbb R))$, the space of closed operator on $L^2(\mathbb R)$, may be defined as follows: for any $S, T \in \mathcal C$,
$$
\widehat{\delta}(S,T)=\|P_S - P_T\|_{B(L^2\times L^2)},
$$
where $P_S$ and $P_T$ are the orthogonal projections on $\mathcal G(S)$ and $\mathcal G(T)$, respectively, and

 $\|\cdot\|_{B(L^2\times L^2)}$ denotes the operator norm on the space of bounded operators on $L^2(\mathbb R)\times L^2(\mathbb R)$.  The results of section IV-4 of Kato \cite{K} imply the following.

\begin{teo}\label{A}
Let $S, T \in \mathcal C$, and suppose $A$ is a bounded operator
on $L^2(\mathbb R)$ with operator norm $\|A\|_{B(L^2)}$. Then
\begin{enumerate}
\item[(a)] $\widehat{\delta}(T+A,T)\leqq \|A\|_{B(L^2)}$.
\item[(b)] $\widehat{\delta}(S+A,T+A)\leqq2(1+||A||^2_{B(L^2})\widehat{\delta}(S,T).$
\end{enumerate}
\end{teo}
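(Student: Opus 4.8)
The plan is to realize both estimates geometrically, working directly with the graphs $\mathcal{G}(S)=\{(f,Sf):f\in D(S)\}\subset L^2(\mathbb{R})\times L^2(\mathbb{R})$ and exploiting the fact that adding a \emph{bounded} operator $A$ acts on graphs as a fixed bounded bijection of the product space. Concretely, I would introduce the ``shear'' operator
\begin{equation}
U_A:L^2(\mathbb{R})\times L^2(\mathbb{R})\to L^2(\mathbb{R})\times L^2(\mathbb{R}),\qquad U_A(f,g)=(f,g+Af),
\end{equation}
which is bounded and boundedly invertible with $U_A^{-1}(f,g)=(f,g-Af)$, and record the intertwining identity $U_A\,\mathcal{G}(T)=\mathcal{G}(T+A)$, valid for every $T\in\mathcal{C}$ and likewise for $S$ (since $D(A)=L^2(\mathbb{R})$, one has $D(T+A)=D(T)$). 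Throughout I would use the classical relation between the projection metric and the one-sided gaps (see Kato \cite{K}, \S I.4): for closed subspaces $M,N$, $\widehat{\delta}(M,N)=\|P_M-P_N\|_{B(L^2\times L^2)}=\max\{\delta(M,N),\delta(N,M)\}$, where $\delta(M,N)=\sup_{0\neq u\in M}\mathrm{dist}(u,N)/\|u\|$. This reduces both items to estimating directed gaps between graphs.

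For item (a) I would estimate both directed gaps by exhibiting explicit competitors. Given a unit vector $w=(f,(T+A)f)\in\mathcal{G}(T+A)$, the point $(f,Tf)\in\mathcal{G}(T)$ is admissible, so
\begin{equation}
\mathrm{dist}(w,\mathcal{G}(T))\leq \|w-(f,Tf)\|=\|Af\|\leq \|A\|_{B(L^2)}\,\|f\|\leq \|A\|_{B(L^2)},
\end{equation}
using $\|f\|\leq\|w\|=1$. Hence $\delta(\mathcal{G}(T+A),\mathcal{G}(T))\leq\|A\|_{B(L^2)}$, and the symmetric choice of $(f,(T+A)f)$ as competitor for $(f,Tf)\in\mathcal{G}(T)$ gives the reverse directed gap. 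Taking the maximum yields $\widehat{\delta}(T+A,T)\leq\|A\|_{B(L^2)}$.

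For item (b) the decisive observation is that $\mathcal{G}(S+A)$ and $\mathcal{G}(T+A)$ are images of $\mathcal{G}(S)$ and $\mathcal{G}(T)$ under the \emph{same} bijection $U_A$. I would prove a general contraction lemma: for any bounded invertible $U$ and closed subspaces $M,N$, $\widehat{\delta}(UM,UN)\leq \|U\|\,\|U^{-1}\|\,\widehat{\delta}(M,N)$. This follows from the gap formula, since for $x\in M$ one has $\mathrm{dist}(Ux,UN)\leq\|U\|\,\mathrm{dist}(x,N)$ while $\|Ux\|\geq\|x\|/\|U^{-1}\|$, giving $\delta(UM,UN)\leq\|U\|\,\|U^{-1}\|\,\delta(M,N)$ and, symmetrically, the reversed bound. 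Applying this with $U=U_A$, using the elementary estimates $\|U_A\|_{B(L^2\times L^2)}\leq 1+\|A\|_{B(L^2)}$ and $\|U_A^{-1}\|_{B(L^2\times L^2)}\leq 1+\|A\|_{B(L^2)}$ (write $U_A=I+N_A$ with $N_A(f,g)=(0,Af)$, so $\|N_A\|\leq\|A\|$), together with the inequality $(1+t)^2\leq 2(1+t^2)$, I obtain
\begin{equation}
\widehat{\delta}(S+A,T+A)=\widehat{\delta}(U_A\mathcal{G}(S),U_A\mathcal{G}(T))\leq (1+\|A\|_{B(L^2)})^2\,\widehat{\delta}(S,T)\leq 2\bigl(1+\|A\|_{B(L^2)}^2\bigr)\,\widehat{\delta}(S,T).
\end{equation}

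The competitor estimates and the norm bound on $U_A$ are routine. The step carrying the real content is the passage between the projection-norm metric $\widehat{\delta}=\|P_{\,\cdot}-P_{\,\cdot}\|$ and the directed gaps $\delta$, which lets the elementary graph geometry be used and is what produces the sharp constants. I expect the main obstacle to be justifying the contraction lemma $\widehat{\delta}(UM,UN)\leq\|U\|\,\|U^{-1}\|\,\widehat{\delta}(M,N)$ cleanly; performing it through the directed-gap characterization rather than directly at the level of orthogonal projections (where $P_{UM}$ is generally \emph{not} $UP_MU^{-1}$) is what makes the argument succeed and delivers exactly the factor $2(1+\|A\|_{B(L^2)}^2)$.
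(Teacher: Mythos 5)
Your proposal is correct. Note, however, that the paper does not prove this statement at all: it simply quotes it from Kato's book (the relevant results are Theorems 2.14 and 2.17 of Chapter IV, together with the Hilbert-space identity $\|P_M-P_N\|=\max\{\delta(M,N),\delta(N,M)\}$ from Chapter I, \S 6.8, which you correctly invoke to pass between the projection metric and the directed gaps). Compared with Kato's own argument for part (b) --- which works directly on graph elements, choosing for each $u\in D(S)$ a near-optimal $v\in D(T)$ and estimating $\|u-v\|^2+\|(S+A)u-(T+A)v\|^2\leq 2(1+\|A\|^2)\bigl(\|u-v\|^2+\|Su-Tv\|^2\bigr)$ --- your route through the shear $U_A(f,g)=(f,g+Af)$, the identity $U_A\mathcal{G}(T)=\mathcal{G}(T+A)$, and the general contraction lemma $\widehat{\delta}(UM,UN)\leq\|U\|\,\|U^{-1}\|\,\widehat{\delta}(M,N)$ is more conceptual: it isolates the one structural fact (bounded perturbation $=$ bounded bijection of the product space preserving the pair of graphs) and in fact yields the slightly sharper constant $(1+\|A\|)^2\leq 2(1+\|A\|^2)$. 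The only point worth making explicit when writing this up is the justification of $\|P_M-P_N\|=\max\{\delta(M,N),\delta(N,M)\}$, since the paper defines $\widehat{\delta}$ via projections while all your estimates are carried out on directed gaps; with that reference in place the argument is complete.
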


\vskip0.2in

\indent\textbf{Acknowledgements:}  This work was initiated  while the first author was visiting the Department of Mathematics of Paris-Sud University as a visiting professor and he was partially supported by CNPq/Brazil  and FAPESP (S\~ao Paulo Research Fundation/Brazil) under the process 2016/07311-0. The second author acknowledges partial support from the ANR project GEODISP and the program MathAmSud EEQUADD and the hospitality of the Wolfgang Pauli Institute, Vienna.

\bibliographystyle{amsplain}

\end{document}